\newcommand{\xmark}{\ding{55}} 
\newcommand{\cmark}{\ding{51}} 
\pgfplotsset{compat=1.18}  
\lstdefinestyle{fancy1}{
  language=Python,
  backgroundcolor=\color{codegray},
  basicstyle=\ttfamily\small,
  keywordstyle=\color{codeblue}\bfseries,
  commentstyle=\color{codegreen}\itshape,
  stringstyle=\color{codered},
  showstringspaces=false,
  frame=tb,
  rulecolor=\color{gray},
  numbers=left,
  numberstyle=\tiny\color{gray},
  breaklines=true,
  captionpos=b,
  tabsize=4
}
\lstdefinestyle{fancy2}{
  language=Python,
  backgroundcolor=\color{codewhite},
  basicstyle=\ttfamily\small,
  keywordstyle=\color{keywordpurple}\bfseries,
  commentstyle=\color{red}\itshape,
  stringstyle=\color{codeblue},
  showstringspaces=false,
  frame=single,
  rulecolor=\color{gray},
  numbers=left,
  numberstyle=\tiny\color{gray},
  breaklines=true,
  captionpos=b,
  tabsize=4
}
\newcounter{algostep}[algorithm]
\DeclareMathOperator{\R}{\mathbb{R}}
\DeclareMathOperator{\E}{\mathbb{E}}
\DeclareMathOperator{\X}{\mathcal{X}}
\DeclareMathOperator{\Y}{\mathcal{Y}}
\DeclareMathOperator{\Z}{\mathcal{Z}}
\DeclareMathOperator{\U}{\mathcal{U}}
\DeclareMathOperator{\Lag}{\mathcal{L}}
\DeclareMathOperator{\prox}{\mathrm{Prox}}
\DeclareMathOperator{\proj}{\mathrm{Proj}}
\DeclareMathOperator{\dist}{\mathrm{dist}}
\DeclareMathOperator{\nor}{\mathcal{N}}
\DeclareMathOperator{\Id}{\mathbf{I}}
\DeclareMathOperator{\gp}{\mathrm{gp}}
\DeclareMathOperator{\V}{\mathcal{V}}
\DeclareMathOperator{\F}{\mathcal{F}}
\DeclareMathOperator{\K}{\mathcal{K}}
\DeclareMathOperator{\ceg}{\mathrm{ceg}}
\DeclareMathOperator{\alm}{\mathrm{alm}}
\DeclareMathOperator{\saga}{\mathrm{saga}}
\definecolor{myGreen}{rgb}{0.0, 0.5, 0.0}
\definecolor{codegray}{rgb}{0.95,0.95,0.95}
\definecolor{codeblue}{rgb}{0.0,0.0,0.6}
\definecolor{codegreen}{rgb}{0,0.6,0}
\definecolor{codered}{rgb}{0.6,0,0}
\definecolor{codewhite}{rgb}{0.99, 0.99, 0.99}
\definecolor{keywordpurple}{rgb}{0.3, 0.1, 0.5}
\journalname{Mathematical Programming}
\begin{document}

\title{Primal-Dual Coordinate Descent for Nonconvex-Nonconcave Saddle Point Problems Under the Weak MVI Assumption  
}

\titlerunning{NC-SPDHG for Nonconvex-Nonconcave SPP Under Weak MVI}        

\author{Iyad Walwil         \and
        Olivier Fercoq 
}


\institute{Iyad Walwil \at
              LTCI, Télécom Paris, Institut Polytechnique de Paris, France \\ Department of Mathematics, Faculty of Sciences, An-Najah National University, Nablus, Palestine \\
              \email{iyad.walwil@telecom-paris.fr}  
           \and
           Olivier Fercoq \at
            LTCI, Télécom Paris, Institut Polytechnique de Paris, France \\
            \email{olivier.fercoq@telecom-paris.fr}
}

\date{Received: date / Accepted: date}

\maketitle

\begin{abstract}
We introduce two novel primal-dual algorithms for addressing nonconvex, nonconcave, and nonsmooth saddle point problems characterized by the weak Minty Variational Inequality (MVI). The first algorithm, Nonconvex-Nonconcave Primal-Dual Hybrid Gradient (NC-PDHG), extends the well-known Primal-Dual Hybrid Gradient (PDHG) method to this challenging problem class. The second algorithm, Nonconvex-Nonconcave Stochastic Primal-Dual Hybrid Gradient (NC-SPDHG), incorporates a randomly extrapolated primal-dual coordinate descent approach, extending the Stochastic Primal-Dual Hybrid Gradient (SPDHG) algorithm. 

To our knowledge, designing a coordinate-based algorithm to solve nonconvex-nonconcave saddle point problems is unprecedented, and proving its convergence posed significant difficulties. This challenge motivated us to utilize PEPit, a Python-based tool for computer-assisted worst-case analysis of first-order optimization methods. By integrating PEPit with automated Lyapunov function techniques, we successfully derived the NC-SPDHG algorithm. 

Both methods are effective under a mild condition on the weak MVI parameter, achieving convergence with constant step sizes that adapt to the structure of the problem. Numerical experiments on logistic regression with squared loss and perceptron-regression problems validate our theoretical findings and show their efficiency compared to existing state-of-the-art algorithms, where linear convergence is observed. Additionally, we conduct a convex-concave least-squares experiment to show that NC-SPDHG performs competitively with SAGA, a leading algorithm in the smooth convex setting.
\keywords{Nonconvex optimization \and Nonconvex proximal  \and Weak MVI \and Coordinate descent \and Random extrapolation \and Computer-aided proofs}
\subclass{90C26 \and 68W20 \and 68V05 \and 65K15}
\end{abstract}

\section{Introduction} \label{sec:intro}

Primal-dual algorithms are powerful algorithms for the resolution of complex optimization problems.
By introducing and updating Lagrange multipliers, they are able to split intertwined operators and deal with each of them separately \cite{condat2023proximal}. The counterpart to this simplification is that we need to solve a saddle-point problem, a class of problems that is in general more difficult to solve than minimization problems~\cite{EG+}. 
Although convex-concave saddle point problems have been studied a lot, this is much less true when we relax the convexity assumption. 
The goal of this paper is to design and analyze a primal-dual coordinate-update algorithm under the quite general weak-Minty assumption \cite{weak_MVI}. More precisely, we are going to extend Primal-Dual Hybrid Gradient (PDHG) \cite{PDHG} and Stochastic PDHG (S-PDHG) \cite{SPDHG} so that they can accept gradients and proximal operators of nonconvex functions.
Thus, we are interested in studying the following nonconvex, nonconcave, and nonsmooth saddle point problem:

\begin{equation} \tag{SPP}
    \min_{x \in \X} \max_{y \in \Y} \Lag(x, y) = f(x) + f_2(x) + \langle Ax, y \rangle - g_2(y) - g(y)    
\end{equation}
where functions $f, g$ are (possibly nonconvex) proximal-friendly, functions $f_2, g_2$ are (possibly nonconvex) differentiable with Lipschitz gradients, and $A$ is a linear operator. Our analysis is conducted under the Weak Minty Variational Inequality (MVI) assumption \cite{weak_MVI}, which means that for some $\rho$, and for each $(z, v) \in \mathrm{gph} ~  \partial \Lag$, the inequality, $\langle v, z - z_{\star} \rangle \geq \rho \|v\|^2$, holds, where $z_*$ is a stationary point.   
\subsection{Related work} \label{subsec:related_work}
Many methods have been proposed to address non-convex, non-concave saddle point problems, including extra-gradient-type algorithms \cite{weak_MVI,EG,EG_2,EG+,NP_PDEG}, Primal-Dual Hybrid Gradient (PDHG)-type methods \cite{NL-PDHGM,NP_PDEG,SA-GDmax}, and Augmented Lagrangian-based approaches \cite{PProx-PDA,NC-ADMM,iALM,ALM}. These methods vary in the problem structures they consider and the assumptions they impose.

However, to the best of our knowledge, none of the existing methods provides a primal-dual coordinate descent algorithm for nonconvex-nonconcave saddle point problems. Coordinate descent-based algorithms have been studied for convex-concave saddle point problems \cite{SPDHG,PURE_CD,PDCD_Latafat,PDCD_Fercoq_Bianchi}, as well as for non-convex optimization problems \cite{BPL,NCCD1,NCCD2}, where the focus is solely on the primal objective.

As for PDHG-like methods, only a few variants extend the original PDHG to handle nonconvex-nonconcave saddle point problems. In particular, the work in \cite{NL-PDHGM} considers problems with a nonlinear coupling between the primal and dual variables through a term of the form $\langle K(x), y \rangle$, where $K$ is a nonlinear operator. The authors analyze an extension of PDHG tailored to such settings.

The works in \cite{NP_PDEG,SA-GDmax} are more closely related to ours, as they consider problems of the form $\min_x\max_y f(x) + \phi(x, y) - g(y)$ where $f$ and $g$ are convex, and $\phi$ is a smooth (possibly nonconvex) function. Both works adopt the weak Minty Variational Inequality (MVI) assumption on the sub-differential of the Lagrangian. Specifically, \cite{NP_PDEG} proposes a stochastic, nonlinearly preconditioned primal-dual extra-gradient method, which resembles a PDHG-type algorithm as it updates the dual variable using the most recent primal iterate rather than the previous one.

Meanwhile, the authors in \cite{SA-GDmax} construct a nonlinear variant of PDHG, named the semi-anchored gradient method. It is built upon the theory of the Bregman proximal point method by selecting a Legendre function that directly extends the linear preconditioner of PDHG to a nonlinear one. Their framework proposes exact and inexact versions of their algorithm, with and without projection, and guarantees convergence to an $\varepsilon$-stationary point under the weak MVI assumption.

\subsection{Our contributions} \label{subsec:contributions}
The primary contributions of this paper are two novel primal-dual algorithms designed to solve nonconvex-nonconcave saddle point problems of the form \eqref{eqn:SPP},  characterized by the weak Minty Variational Inequality (MVI) as presented earlier.

\begin{enumerate}
    \item \textbf{Nonconvex-Nonconcave Primal-Dual Hybrid Gradient (NC-PDHG):}
    
     This algorithm generalizes the well-known Chambolle–Pock algorithm \cite{PDHG}, also known as the Primal-Dual Hybrid Gradient (PDHG), which was originally developed for convex–concave saddle point problems. We extend this method to the nonconvex-nonconcave setting and provide a convergence analysis.
    \vspace{0.1cm}
    \item \textbf{Nonconvex-Nonconcave Stochastic Primal-Dual Hybrid Gradient (NC-SPDHG):}
    This algorithm extends the Stochastic Primal-Dual Hybrid Gradient (SPDHG) method \cite{SPDHG} to the nonconvex-nonconcave regime. It incorporates a randomly extrapolated primal-dual coordinate descent approach, making it the first of its kind in this setting. To establish convergence, we leverage the \texttt{PEPit} software \cite{PEPit} along with the automated Lyapunov function technique proposed in \cite{upadhyaya_Auto_Lyapunov}, as formalized in \cite{fercoq2024defininglyapunovfunctionssolution}.
    \vspace{0.1cm}
    \item \textbf{Parameter selection analysis:} For both algorithms, we provide a detailed analysis of parameter choices, showing they function under a mild condition on the weak MVI parameter $\rho$. 
    \vspace{0.1cm}
    \item \textbf{Numerical experiments:} We validate our theoretical findings with experiments on two regression problems:
    (i) logistic regression with squared loss, and (ii) perceptron regression with ReLU activation. Our methods are compared to two state-of-the-art algorithms: Constrained Extra-Gradient+ \cite{EG+}  and the Augmented Lagrangian Method \cite{ALM}. The results confirm the validity of our theoretical analysis and demonstrate the effectiveness of our algorithms, with linear convergence observed empirically.
    \vspace{0.1cm}
    \item \textbf{Convex-concave experiment:} We further test NC-SPDHG on a convex–concave least-squares problem, comparing it with SAGA, a widely used state-of-the-art method. The results highlight the competitive performance of NC-SPDHG even in this classical setting.
\end{enumerate}
\subsection{Organization of the Paper} \label{subsec:organization}
The remainder of the paper is organized as follows:
\begin{enumerate}
\item Section~\ref{sec:clarke} reviews relevant terminology related to Clarke sub-differentials.

\item Section~\ref{sec:assumptions} introduces the saddle point problem of interest along with the key assumptions under which our analysis is conducted.

\item Section~\ref{sec:NC-PDHG} presents our first contribution: the Nonconvex–Nonconcave Primal-Dual Hybrid Gradient (NC-PDHG) algorithm, including its convergence analysis and parameter selection.

\item Section~\ref{sec:SPDHG} outlines our second contribution: the Nonconvex–Nonconcave Stochastic Primal-Dual Hybrid Gradient (NC-SPDHG). We present its convergence result, and analyze its parameter selection.

\item Section~\ref{sec:PEPit} details the computer-assisted proof methodology used for NC-SPDHG, based on the \texttt{PEPit} toolbox and the automated Lyapunov function framework proposed in~\cite{fercoq2024defininglyapunovfunctionssolution}.

\item Section~\ref{sec:SPDHG:convergence} provides the complete convergence proof for the NC-SPDHG algorithm.

\item Section~\ref{sec:numerical_exp} presents numerical experiments that validate our theoretical findings and compare our methods to state-of-the-art algorithms.
\end{enumerate}
\subsection{Notations} \label{subsec:notations}
We denote the primal space by $\X = \prod_{i = 1}^m \X_i$, where $m \geq 1$, and the dual space by $\Y$. The primal-dual space is then  $\Z = \X \times \Y$. The set of saddle points will be denoted as $\Z^{\star} \subset \Z$. All vector spaces are assumed to be Euclidean. Similarly, for a primal vector $x = (x_1, \dots, x_m) \in \X$, and a dual vector $y \in \Y$, we write $z = (x, y) \in \Z$. We define the following norms: 
\begin{itemize}
    \item $\|x\|^2_{\X} = \sum_{i = 1}^m \|x_i\|^2_{\X_i}$.
    \item $\|y\|^2_{\Y}$ denotes the standard $\ell_2$-norm in $\Y$.
    \item $\|z\|^2_{\Z} = \|x\|^2_{\X} + \|y\|^2_{\Y}$.
    \item For $\gamma = (\gamma_x, \gamma_y)$ a couple of positive numbers, $\|z\|^2_{\gamma \Z} = \gamma_x\|x\|^2_{\X} + \gamma_y\|y\|^2_{\Y}$.
\end{itemize}
We consider functions with values in the extended real line $\bar {\mathbb R} := \mathbb R \cup \{+\infty\}$. Given a function $f \colon \X \rightarrow \bar \R$ and a step size $s > 0$, its proximal operator is defined as: 
\begin{equation*}
    \prox_{s, f}(x) = \arg\min_{x' \in \X} f(x') + \frac{1}{2s} \|x' - x\|^2
\end{equation*}
We will make use of the indicator function associated with a subset $S \subset \mathcal{X}$, defined by: 
\begin{equation*}
    \imath_{S}(x) = \begin{cases} 0 & \text{if} ~ x \in S \\ + \infty & \text{Otherwise} \end{cases}
\end{equation*}
We define the projection of a point $u \in \X$ onto a subset $S \subseteq \X$ as  $\displaystyle \proj_{S}(u) = \prox_{\imath_{S}}(u)$. At iteration $k$, the NC-SPDHG algorithm uniformly picks a block index
$i_{k+1} \sim \U\{1, \dots, m\}$. We define as $\mathscr{G}_k$ the
filtration generated by the random indices $\{i_1, \dots, i_k\}$. Lastly, we consider a class of functions $\F$ satisfying some properties like convexity or smoothness. A function $f \in \F$ will be real valued $f \colon \X \rightarrow \R$ and we will consider a list of points $x \in \X$ on which an algorithm $\mathcal{A}$ may act. For a matrix $M$, we will denote by $\|M\|$ its operator norm with respect to the Euclidean norm in its input and output spaces.
\section{Clarke sub-differential} \label{sec:clarke}
As we do not assume convexity, the classical definition of the convex sub-differential no longer applies. Instead, we use the \textit{Clarke sub-differential} \cite{clarke}, which generalizes the classical concept and remains well-defined under weaker conditions.

We start by recalling the definition of \textit{locally Lipschitz} functions — an essential prerequisite for introducing Clarke sub-differentials.
\begin{definition}[Locally-Lipschitz function] \cite[Subsection 2.1]{clarke}

    Let $S \subseteq \X$. A function $f \colon S \rightarrow \R$ satisfies a Lipschitz condition on $S$ if there exists a constant $L \geq 0$ such that:
    \begin{equation} \label{eqn:locally_lipschitz}
        \left|f(x') - f(x) \right| \leq L \left\|x' - x \right\|, ~~~ \forall x', x \in S
    \end{equation}
In particular, 
we say that $f$ is $L$-Lipschitz near $x$ if, for some $\varepsilon > 0, f$ satisfies \eqref{eqn:locally_lipschitz} in the set $x + \varepsilon B$, where $B$ is the open unit ball. 
\end{definition}
We now present a general definition of the \textit{Clarke sub-differential}, which \textit{does not} require the function to be locally Lipschitz, only finite at the point of interest. It is based on the concept of the \textit{Clarke normal cone}.
\begin{definition}[Clarke normal cone] \cite[Proposition 2.5.7]{clarke}
    
    Let $S \subset \X$ be nonempty and closed. The \textit{Clarke normal cone} to $S$ at a point $x \in S$, denoted $\nor_S(x)$, is the closed convex hull: 
    \begin{equation*} \label{eqn:normal_cone}
        \nor_S(x) = \overline{\mathrm{conv}} \left\{ \lambda \lim \frac{v_i}{|v_i|} : \lambda \geq 0, v_i \perp S ~\text{at} ~ x_i, ~ x_i \rightarrow x, ~ v_i \rightarrow 0 \right\}
    \end{equation*}
\end{definition}
where we define a nonzero vector $v$ to be \textit{perpendicular} to $S$ at $x \in S$ (symbolically, $v \perp S ~ \text{at} ~ x$), if there exists $x' \notin S$ whose unique closest point in $S$ is $x$ and such that $v = x' - x$.

\begin{definition}[Clarke sub-differential] \cite[Definition 2.4.10]{clarke}
    
    Let $f \colon \X \rightarrow \R \cup \{\pm \infty\}$ be finite at a point $x \in \X$. The \textit{Clarke sub-differential} of $f$ at $x$, denoted $\partial_C f(x)$, is defined as: 
    \begin{equation*}
        \partial_C f(x) = \left\{\xi \in \X: (\xi, -1) \in \nor_{\mathrm{epi} f}(x, f(x)) \right\}
    \end{equation*}
    where $\nor_{\mathrm{epi} f}$ is the Clarke normal cone to the epigraph of $f$ and the epigraph of $f$, denoted $\mathrm{epi} f$, is defined as: 
    \begin{equation*}
        \mathrm{epi} f = \left\{ (x, t) \in \X \times \R : f(x) \leq t \right\}
    \end{equation*}
\end{definition}
This general definition allows us to recover familiar results from convex analysis for non-Lipschitz functions. One example is the sub-differential of the indicator function of a set, as stated below.
\begin{proposition}\cite[Proposition 2.4.12]{clarke} \label{prop:indicator_clarke}

    Let the point $x \in S$. Then, the Clarke sub-differential of the indicator function $\imath_S$ at $x$ satisfies:
    \begin{equation} \label{eqn:indicator_clarke}
        \partial_C \imath_S(x) = \nor_S(x) 
    \end{equation}
\end{proposition}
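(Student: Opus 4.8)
The plan is to reduce the statement to a normal-cone computation for a product set. Fix $x\in S$; since $\imath_S(x)=0$, the epigraph is
\[
\mathrm{epi}\,\imath_S=\{(x',t)\in\X\times\R:\imath_S(x')\le t\}=S\times[0,\infty),
\]
which is nonempty and closed because $S$ is closed, so $\partial_C\imath_S(x)=\{\xi:(\xi,-1)\in\nor_{S\times[0,\infty)}(x,0)\}$ is well defined. It then suffices to prove
\[
\nor_{S\times[0,\infty)}(x,0)=\nor_S(x)\times(-\infty,0],
\]
since $(\xi,-1)$ belongs to the right-hand side exactly when $\xi\in\nor_S(x)$ (the constraint $-1\le 0$ being automatic), which is precisely \eqref{eqn:indicator_clarke}.

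I would derive this identity directly from the definition of the Clarke normal cone, using two observations. First, $\nor_{[0,\infty)}(0)=(-\infty,0]$: the points outside $[0,\infty)$ are exactly the reals $t'<0$, each has unique closest point $0$, the associated perpendicular vector is $t'<0$, and normalizing and scaling by $\lambda\ge 0$ (with $\lambda=0$ returning the origin) sweeps out all of $(-\infty,0]$. Second, because the Euclidean distance on a Cartesian product splits coordinatewise, a point $(x',t')$ has a unique closest point in $S\times[0,\infty)$ iff $x'$ has a unique closest point in $S$ and $t'$ has a unique closest point in $[0,\infty)$, and that point is the pair; hence a vector is perpendicular to $S\times[0,\infty)$ at a point iff each of its two components is either zero or perpendicular to the corresponding factor at the corresponding point, with at least one component nonzero. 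Passing to limits of normalized perpendiculars and then to closed convex hulls turns this into $\nor_{S\times[0,\infty)}=\nor_S\times\nor_{[0,\infty)}$ at the relevant points, and combining with the first observation gives the claim.

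Concretely, for $\nor_S(x)\times(-\infty,0]\subseteq\nor_{S\times[0,\infty)}(x,0)$ one takes a generator $w=\lambda\lim_i v_i/|v_i|$ of $\nor_S(x)$ with $x_i\to x$, $v_i=x_i'-x_i\perp S$ at $x_i$, $v_i\to 0$, and checks that $(x_i',-\epsilon_i)$ with $\epsilon_i:=|v_i|/\lambda$ has unique closest point $(x_i,0)$ in $S\times[0,\infty)$, so $(v_i,-\epsilon_i)\perp(S\times[0,\infty))$ at $(x_i,0)$ and the normalized directions converge to a positive multiple of $(w,-1)$; the degenerate generator $\lambda=0$ is handled with $x_i=x$ and $\epsilon_i\downarrow 0$, giving $(0,-1)$. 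Closedness and convexity of the cone then promote this to $(\xi,-1)$ for every $\xi\in\nor_S(x)$. For the reverse inclusion one inspects a generator $(v_i,s_i)\perp(S\times[0,\infty))$ at $(x_i,t_i)$ with $(x_i,t_i)\to(x,0)$: coordinatewise projection forces $s_i\le 0$ and $v_i$ either $0$ or $\perp S$ at $x_i$, so the limit direction has the form $(\eta,\tau)$ with $\eta$ a nonnegative multiple of a limit of unit perpendiculars to $S$ at $x$ (hence $\eta\in\nor_S(x)$) and $\tau\le 0$; this shape survives convex combinations and limits, so any $(\xi,-1)\in\nor_{S\times[0,\infty)}(x,0)$ has $\xi\in\nor_S(x)$.

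I expect the main obstacle to be this reverse inclusion — the nontrivial half of the product rule: one must exclude ``mixed'' normal directions at the corner $\{t=0\}$ and handle the normalization $v_i/|v_i|$ carefully when $|v_i|$ is small relative to $|s_i|$, in particular separating the degenerate case where the $\X$-component of the limiting direction vanishes (then it equals $0\in\nor_S(x)$ trivially) from the case where it is a genuine nonzero multiple of a limit of unit perpendiculars to $S$ at $x$.
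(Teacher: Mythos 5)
The paper does not prove this proposition: it is imported verbatim from Clarke's book (\cite[Proposition 2.4.12]{clarke}), so there is no in-paper argument to compare yours against. Judged on its own, your proof is correct and, usefully, works directly from the two definitions the paper actually states. The identification $\mathrm{epi}\,\imath_S=S\times[0,\infty)$, the coordinatewise splitting of nearest points for a product set, the computation $\nor_{[0,\infty)}(0)=(-\infty,0]$, and the resulting identity $\nor_{S\times[0,\infty)}(x,0)=\nor_S(x)\times(-\infty,0]$ are all sound, and reducing \eqref{eqn:indicator_clarke} to that identity is immediate because the second coordinate of $(\xi,-1)$ automatically lies in $(-\infty,0]$. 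The two delicate points are exactly the ones you flag, and your treatment of them is adequate: in the reverse inclusion the normalization is by $|(v_i,s_i)|$ rather than $|v_i|$, so you must pass to a subsequence on which $v_i/|v_i|$ converges (legitimate by compactness of the unit sphere, consistent with the paper's Euclidean setting, which is also the scope in which the limits-of-perpendiculars description of $\nor_S$ is valid) and separate the degenerate case where the $\X$-component of the limiting direction vanishes; and the final passage from generators to the whole cone uses that $\nor_S(x)\times(-\infty,0]$ is closed and convex, which holds since $\nor_S(x)$ is by definition a closed convex hull. For comparison, Clarke's own proof goes through tangent-cone/polarity calculus rather than a direct perpendicular-limit computation, and you could shorten your argument by invoking the standard product rule $\nor_{S_1\times S_2}(x_1,x_2)=\nor_{S_1}(x_1)\times\nor_{S_2}(x_2)$; but a self-contained derivation from the definitions given in this paper is a reasonable, arguably preferable, alternative.
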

We end this section with a simplified version of the \textit{Clarke sub-differential} for locally Lipschitz functions. A well-known result due to Rademacher states that a locally Lipschitz function is differentiable almost everywhere (see e.g., \cite[Theorem 9.60]{rockafellar2009variational}). In particular, every neighborhood of $x$ contains a point $x'$ for which $\nabla f(x')$ exists. This motivates the following simplified construction for the Clarke sub-differential. 
\begin{proposition}\cite[Theorem 2.5.1]{clarke}

Let $f$ be Lipschitz near $x$, and let $\Omega_f$ be the set of points in $x + \varepsilon B$ at which $f$ fails to be differentiable. Then,
    \begin{equation}
        \partial_C f(x) = \mathrm{conv} \left\{ \lim \nabla f(x_i): x_i \rightarrow x, ~ x_i \notin \Omega_f \right\}
    \end{equation}
\end{proposition}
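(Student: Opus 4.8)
The plan is to prove the two inclusions separately. Write $G := \mathrm{conv}\{\lim \nabla f(x_i) : x_i \to x,\ x_i \notin \Omega_f\}$ for the right-hand side, and use the generalized directional derivative $f^\circ(x;v) := \limsup_{y \to x,\, t \downarrow 0} \tfrac{f(y+tv)-f(y)}{t}$ as a bridge between $G$ and the epigraph-based definition of $\partial_C f(x)$. I first record the facts I will borrow from \cite[Section 2.1]{clarke}, all valid because $f$ is $L$-Lipschitz near $x$: the map $v \mapsto f^\circ(x;v)$ is finite, sublinear and $L$-Lipschitz; the map $(y,v) \mapsto f^\circ(y;v)$ is upper semicontinuous; and the epigraph definition of $\partial_C f(x)$ used in the excerpt coincides with $\{\xi \in \X : \langle \xi, v\rangle \le f^\circ(x;v)\ \forall v \in \X\}$, which is nonempty, convex and compact. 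I also check that $G$ is well-defined: by Rademacher's theorem $\Omega_f$ is Lebesgue-null, so $x$ is approached by points of differentiability; along such a sequence $\|\nabla f(x_i)\| \le L$, so subsequential limits exist and form a compact set bounded by $L$, whence $G$ is nonempty, convex and compact.

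For the inclusion $G \subseteq \partial_C f(x)$: if $\xi = \lim_i \nabla f(x_i)$ with $x_i \to x$ and $x_i \notin \Omega_f$, then for every direction $v$ one has $\langle \nabla f(x_i), v\rangle = f'(x_i;v) \le f^\circ(x_i;v)$, and upper semicontinuity of $f^\circ$ gives $\langle \xi, v\rangle \le \limsup_i f^\circ(x_i;v) \le f^\circ(x;v)$; hence $\xi \in \partial_C f(x)$. Since $\partial_C f(x)$ is convex, taking the convex hull yields $G \subseteq \partial_C f(x)$.

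For the reverse inclusion $\partial_C f(x) \subseteq G$, I argue by contradiction. If $\xi_0 \in \partial_C f(x) \setminus G$, then since $G$ is nonempty, compact and convex, the separation theorem produces a direction $v$ with $\langle \xi_0, v\rangle > \sigma := \max_{g \in G} \langle g, v\rangle$. The crux is the estimate $f^\circ(x;v) \le \sigma$. To obtain it, fix $\varepsilon' \in (0,\varepsilon)$; for $y$ near $x$ and $t>0$ small, both $y$ and $y+tv$ lie in $x + \varepsilon' B$. Applying Fubini's theorem to the null set $\Omega_f$, for Lebesgue-almost every base point $y$ the segment $\{y + sv : s \in [0,t]\}$ meets $\Omega_f$ in a $1$-dimensional null set; since $s \mapsto f(y+sv)$ is Lipschitz, hence absolutely continuous, we get $f(y+tv) - f(y) = \int_0^t \langle \nabla f(y+sv), v\rangle\, ds \le t \sup\{\langle \nabla f(w), v\rangle : w \in (x + \varepsilon' B)\setminus \Omega_f\}$. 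Because the difference quotient is continuous in $y$, restricting to this full-measure set of base points does not change the $\limsup$ defining $f^\circ$, so $f^\circ(x;v) \le \limsup_{w \to x,\, w \notin \Omega_f} \langle \nabla f(w), v\rangle$; and this last $\limsup$ is attained along some sequence $w_j \to x$, $w_j \notin \Omega_f$, which after passing to a subsequence has $\nabla f(w_j) \to g$ for some $g \in G$, so its value is $\langle g, v\rangle \le \sigma$. This contradicts $\langle \xi_0, v\rangle \le f^\circ(x;v)$, and the proof is complete.

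The main obstacle is precisely this estimate on $f^\circ(x;v)$: turning a $\limsup$ over pairs $(y,t)$ into a supremum of $\langle \nabla f(\cdot), v\rangle$ over a punctured neighborhood of $x$. The delicate point is that the segment from $y$ to $y+tv$ may cross the bad set $\Omega_f$; the Fubini argument above shows this can be avoided (up to a null set on the segment) for a full-measure choice of $y$, and continuity of the difference quotient in $y$ lets us pass from that full-measure set back to the unrestricted $\limsup$. Everything else — well-definedness of $G$, the easy inclusion, and the separation step — is routine once the structural properties of $f^\circ$ and $\partial_C f$ for locally Lipschitz functions cited from \cite{clarke} are available.
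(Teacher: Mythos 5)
The paper does not prove this statement at all: it is quoted verbatim from Clarke's book (Theorem 2.5.1) and used as a black box, so there is no in-paper proof to compare against. Your argument is correct and is essentially Clarke's own proof of the gradient formula — the support-function characterization of $\partial_C f(x)$ via $f^\circ$, the easy inclusion from upper semicontinuity of $f^\circ$, and the reverse inclusion via separation together with the Rademacher--Fubini integration argument along segments; the only borrowed ingredient worth flagging is the equivalence between the epigraph/normal-cone definition used in the paper and the $f^\circ$-characterization, which is itself a (cited) theorem of Clarke for locally Lipschitz functions, and the passage from the supremum over a fixed ball $x+\varepsilon' B$ to the $\limsup$ as $w \to x$ deserves one explicit line (let the radius shrink), but neither is a genuine gap.
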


\section{Problem Description and Assumptions} \label{sec:assumptions}
In this paper, we are interested in the following nonconvex, nonconcave, and nonsmooth saddle point problem:

\begin{equation} \tag{SPP} \label{eqn:SPP}
    \min_{x \in \X} \max_{y \in \Y} ~ \Lag(x, y) = f(x) + f_2(x) + \langle Ax, y \rangle - g_2(y) - g(y)    
\end{equation}

under the following assumptions

\begin{assumption}[Problem properties] \label{assump:prob_prop}
    \begin{itemize}
        \item $f \colon \X \rightarrow \bar \R$ and $g \colon \Y \rightarrow \bar \R$ are (possibly nonconvex) proximal-friendly.
        \item $A \colon \X \rightarrow \Y$ is a linear operator.
        \item $f_2 \colon \X \rightarrow \R$ and $g_2 \colon \Y \rightarrow \R$ are (possibly nonconvex) differentiable with $L_{\nabla f_2}$ and $L_{\nabla g_2}$-Lipschitz gradients, respectively. That is, there exist constants $L_{\nabla f_2}, L_{\nabla g_2} \geq 0$ such that:
        \begin{align}
            \|\nabla f_2(x_1) - \nabla f_2(x_2)\| &\leq L_{\nabla f_2} \|x_1 - x_2\|, \hspace{0.5cm} \forall x_1, x_2 \in \X  \label{eqn:f2_Lip}\\ 
            \|\nabla g_2(y_1) - \nabla g_2(y_2)\| &\leq L_{\nabla g_2} \|y_1 - y_2\|, \hspace{0.58cm} \forall y_1, y_2 \in \Y \label{eqn:g2_Lip}
        \end{align}
    \end{itemize}
\end{assumption}
\vspace{0.2cm}
To analyze convergence, we further assume the weak Minty Variational Inequality (MVI) \cite{weak_MVI} condition on the Clarke sub-differential of the Lagrangian:

\begin{assumption}[Weak MVI]\cite[Assumption 1]{weak_MVI} \label{assump:weak-MVI}

There exists a nonempty set $S_{\star} \subseteq \Z_{\star}$ such that for all $z_{\star} \in S_{\star}$ and some $\rho \in \R$ 
    \begin{equation} \label{eqn:weak-MVI}
        \langle v, z - z_{\star} \rangle \geq \rho \|v\|^2, \hspace{1cm} \forall (z, v) \in \mathrm{gph} ~ \partial_C \Lag
    \end{equation}    
\end{assumption}
where $\mathrm{gph} ~\partial_C \Lag$ denotes the graph of the Clarke sub-differential $\partial_C \Lag$. 

\begin{remark}
Note that $\rho$ may be negative.
\end{remark}

\vspace{0.2cm}
\noindent
\textbf{Stationary points.}
Throughout the paper, we assume the existence of a stationary point $z_* = (x_*, y_*)$, which should satisfy the following optimality conditions:
\begin{subequations} \label{eqn:stationary_points}
\begin{align}
    0 \in \partial_x \Lag(x_*, y_*) &= \partial_C f(x_*) + \nabla f_2(x_*) + A^\top y_*  \\ 
    0 \in \partial_y \Lag(x_*, y_*) &= Ax_* - \nabla g_2(y_*) - \partial_C g(y_*)
\end{align}
\end{subequations}
To numerically identify such stationary points, we adopt the definition of the KKT error introduced in \cite{walwil2025smoothed}. Based on the stationarity conditions given in \eqref{eqn:stationary_points}, for any $x, y$, the KKT error is defined as: 
\begin{equation} \label{eqn:KKT_error}
    \K(x, y) = \left\| \partial_C f(x) + \nabla f_2(x) + A^\top y\right\|^2_0  +  \left\| Ax - \nabla g_2(y) - \partial_C g(y)\right\|^2_0
\end{equation}
where we define the "infimal size" of a set $\mathcal{Q}$ as: 
\begin{equation*}
    \|\mathcal{Q}\|_0 = \min \{\|q\| : q \in \mathcal{Q}\}
\end{equation*}
\begin{definition}[$\tau$-stationary point]

Given a tolerance $\tau > 0$, we say that $(x', y')$ is a $\tau$-stationary point if the KKT error defined in \eqref{eqn:KKT_error} satisfies: $\K(x', y') \leq \tau$.    
\end{definition}
\section{NC-PDHG algorithm} \label{sec:NC-PDHG}

The Primal-Dual Hybrid Gradient (PDHG) algorithm is a first-order optimization method designed to solve convex-concave saddle point problems. Originally introduced by Chambolle and Pock in 2011 \cite{PDHG}, PDHG has gained significant popularity in fields such as imaging, signal processing, and machine learning due to its simplicity, scalability, and efficiency for large-scale problems. However, when relaxing the convexity and concavity assumptions, the literature becomes significantly sparser, with only a few recent attempts addressing such problems \cite{NL-PDHGM,NP_PDEG,SA-GDmax}, as previously discussed in the related work subsection \ref{subsec:related_work}.

This section presents our first contribution, in which we propose a Nonconvex-Nonconcave Primal-Dual Hybrid Gradient (NC-PDHG) algorithm that generalizes PDHG to a broader class of saddle point problems of the form \eqref{eqn:SPP}. In particular, NC-PDHG accommodates both gradient and proximal steps involving nonconvex (and nonconcave) functions, enabling the algorithm to address important emerging applications where standard convex assumptions no longer hold. By assuming Assumption \ref{assump:weak-MVI} (weak MVI), and making use of the notion of Clarke sub-differentials presented in Section \ref{sec:clarke}, we managed to tackle this challenging task, and ensure the convergence to zero of an element in the sub-differential of the defined Lagrangian in \eqref{eqn:SPP}.

\begin{algorithm}[H]
\begin{algorithmic}[1]
\STATE \textbf{Input:} $\gamma_x, \gamma_y, \alpha > 0$
\STATE \textbf{Initialize:} $(x_0, y_0)$ 
\WHILE{Stopping Criterion}
    \STATE $\hat{y}_{k+1} = y_k + \gamma_y \left(Ax_k - \nabla g_2(y_k)\right)$ \label{alg_pdhg:yh}
    \STATE $\bar{y}_{k+1} \in \prox_{\gamma_y, g}\left(\hat{y}_{k+1}\right)$ \label{alg_pdhg:yb}
    \STATE $\hat{x}_{k+1} = x_k - \gamma_x \left(\nabla f_2(x_k) + A^\top \bar{y}_{k+1}\right)$ \label{alg_pdhg:xh}
    \STATE $\bar{x}_{k+1} \in \prox_{\gamma_x, f}\left(\hat{x}_{k+1}\right)$ \label{alg_pdhg:xb}
    \STATE $x_{k+1} = x_k + \alpha \left(\bar{x}_{k+1} - \hat{x}_{k+1} - \gamma_x\left(\nabla f_2(\bar{x}_{k+1}) + A^\top \bar{y}_{k+1}\right)\right)$ \label{alg_pdhg:x}
    \STATE $y_{k+1} = y_k + \alpha \left(\bar{y}_{k+1} - \hat{y}_{k+1} + \gamma_y\left(A\bar{x}_{k+1} - \nabla g_2(\bar{y}_{k+1}\right)\right)$ \label{alg_pdhg:y}
\ENDWHILE
\RETURN $(x_{k+1}, y_{k+1})$
\end{algorithmic}
\caption{Nonconvex-Nonconcave Primal-Dual Hybrid Gradient (NC-PDHG)}
\label{alg:NC-PDHG}
\end{algorithm}

Note that most of the steps look very similar to the formulation of PDHG in \cite{PDCD_Latafat}.
The main differences come from the non-uniqueness of the points in the proximal operator, the extrapolation in $\nabla f_2$ and $\nabla g_2$ and the presence of the parameter $\alpha \leq 1$.

\subsection{Convergence Proof} \label{sec:PDHG:convergence}
Before proving the main theorem of this section,  we show that we can have an easy access to a point in $\partial_C \mathcal L(\bar z_{k+1})$.
\begin{lemma} \label{lemma:D_k+1}
Let $\bar D_{k+1} = (\bar F_{k+1}, \bar G_{k+1})$ be defined as follows:
    \begin{align}
    \begin{split} \label{eqn:G_k+1}
        \bar G_{k+1} &= \frac{1}{\gamma_y} (\hat y_{k+1} - \bar y_{k+1}) + \nabla g_2(\bar y_{k+1}) - A\bar x_{k+1} \\
        &= \frac{1}{\gamma_y}(y_k - \bar y_{k+1}) + \nabla g_2(\bar y_{k+1}) - \nabla g_2(y_k)  +  A(x_k - \bar x_{k+1}) 
    \end{split} \\
    \begin{split} \label{eqn:F_k+1}
        \bar F_{k+1} &= \frac{1}{\gamma_x}(\hat x_{k+1} - \bar x_{k+1}) + \nabla f_2(\bar x_{k+1}) + A^\top \bar y_{k+1} \\
         &= \frac{1}{\gamma_x} (x_k - \bar x_{k+1}) + \nabla f_2(\bar x_{k+1}) - \nabla f_2(x_k) 
    \end{split}
    \end{align}
    Then, for $(\bar x_{k+1}, \bar y_{k+1})$ generated by Algorithm \ref{alg:NC-PDHG}, we obtain:
    \begin{align}
        \bar G_{k+1} &\in \partial_C g(\bar y_{k+1}) + \nabla g_2(\bar y_{k+1}) - A \bar x_{k+1} \label{eqn:G_k+1_weak_MVI}\\ 
        \bar F_{k+1} &\in \partial_C f(\bar x_{k+1}) + \nabla f_2(\bar x_{k+1}) + A^\top \bar y_{k+1} \label{eqn:F_k+1_weak_MVI}
    \end{align}
\end{lemma}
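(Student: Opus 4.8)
The plan is to obtain both inclusions directly from the first-order optimality (Fermat) condition satisfied by the two proximal subproblems in Algorithm~\ref{alg:NC-PDHG}, combined with the Clarke calculus recalled in Section~\ref{sec:clarke}. The single fact doing all the work is the following: if $\bar x \in \prox_{s,h}(u)$ for some $s>0$, then $\bar x$ minimizes $x'\mapsto h(x') + \tfrac{1}{2s}\|x'-u\|^2$, so Fermat's rule for the Clarke sub-differential gives $0 \in \partial_C\bigl(h + \tfrac{1}{2s}\|\cdot-u\|^2\bigr)(\bar x)$; since the added quadratic is $C^1$, the Clarke sum rule holds with equality there and yields $\tfrac{1}{s}(u-\bar x)\in \partial_C h(\bar x)$.

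Applying this to line~\ref{alg_pdhg:yb} with $h=g$, $s=\gamma_y$, $u=\hat y_{k+1}$, $\bar x=\bar y_{k+1}$ gives $\tfrac{1}{\gamma_y}(\hat y_{k+1}-\bar y_{k+1}) \in \partial_C g(\bar y_{k+1})$; adding $\nabla g_2(\bar y_{k+1}) - A\bar x_{k+1}$ to both sides produces exactly \eqref{eqn:G_k+1_weak_MVI} with $\bar G_{k+1}$ as in the first line of \eqref{eqn:G_k+1}, and the second line of \eqref{eqn:G_k+1} follows by substituting $\hat y_{k+1} = y_k + \gamma_y(Ax_k - \nabla g_2(y_k))$ from line~\ref{alg_pdhg:yh} and cancelling the $Ax_k$ and $\nabla g_2(y_k)$ terms. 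Symmetrically, line~\ref{alg_pdhg:xb} with $h=f$, $s=\gamma_x$, $u=\hat x_{k+1}$, $\bar x=\bar x_{k+1}$ gives $\tfrac{1}{\gamma_x}(\hat x_{k+1}-\bar x_{k+1})\in \partial_C f(\bar x_{k+1})$; adding $\nabla f_2(\bar x_{k+1}) + A^\top\bar y_{k+1}$ yields \eqref{eqn:F_k+1_weak_MVI}, and substituting $\hat x_{k+1} = x_k - \gamma_x(\nabla f_2(x_k) + A^\top\bar y_{k+1})$ from line~\ref{alg_pdhg:xh} collapses $\bar F_{k+1}$ to the compact second line of \eqref{eqn:F_k+1}.

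The delicate point is making the Clarke calculus rigorous in this nonconvex, possibly non-Lipschitz setting: I would invoke Fermat's rule through the normal-cone characterization of $\partial_C$ from Section~\ref{sec:clarke} (a minimizer of $h$ at $\bar x$ forces $(0,-1)\in\nor_{\mathrm{epi}\,h}(\bar x, h(\bar x))$), and use that adding a $C^1$ function translates the Clarke sub-differential by its gradient, citing \cite{clarke}; when $f$ and $g$ are locally Lipschitz near the relevant iterates, the simplified characterization and the standard Clarke sum rule suffice. I would also remark that the ``proximal-friendly'' part of Assumption~\ref{assump:prob_prop} is precisely what ensures the argmin sets in lines~\ref{alg_pdhg:yb} and~\ref{alg_pdhg:xb} are nonempty, so $\bar y_{k+1}$ and $\bar x_{k+1}$ are genuine minimizers to which Fermat's rule applies. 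Note that neither the weak-MVI structure nor any step-size restriction enters here: the lemma is a bookkeeping statement converting the proximal updates into explicit sub-differential elements, which the convergence proof will then plug into Assumption~\ref{assump:weak-MVI}.
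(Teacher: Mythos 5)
Your proposal is correct and follows essentially the same route as the paper: Fermat's rule at the proximal minimizer, splitting off the smooth quadratic (which translates the Clarke sub-differential by its gradient), substituting the forward step from lines~\ref{alg_pdhg:yh}/\ref{alg_pdhg:xh}, and adding the remaining terms to get the stated inclusions. Your extra remarks on the rigor of the Clarke sum rule and on proximal-friendliness guaranteeing nonempty argmin sets are consistent with, and slightly more explicit than, the paper's own two-line derivation.
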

\begin{proof}
    We prove the first inclusion, and the second follows similarly. According to Algorithm \ref{alg:NC-PDHG}, $\bar y_{k+1}$ is updated as: 
    \begin{align*}
        \bar{y}_{k+1} &\in \prox_{\gamma_y, g}(\hat y_{k+1}) = \arg\min_{y' \in \Y} g(y') + \frac{1}{2\gamma_y} \|y' - \hat y_{k+1}\|^2_{\Y} \\ 
        &\Longrightarrow 0 \in \partial \left(g + \frac{1}{2\gamma_y}\|. - \hat y_{k+1}\|^2_{\Y} \right)(\bar y_{k+1}) \\
        & \Longrightarrow 0 \in \partial_C g(\bar y_{k+1}) + \frac{1}{\gamma_y} (\bar y_{k+1} - \hat y_{k+1}) \\ 
        & \Longrightarrow \frac{1}{\gamma_y} (y_k - \bar y_{k+1}) + Ax_k - \nabla g_2(y_k) \in \partial_C g(\bar y_{k+1}) \\
        &\Longrightarrow \bar G_{k+1} \in \partial_C g(\bar y_{k+1}) + \nabla g_2(\bar y_{k+1}) - A\bar x_{k+1} ~~~~~~~~~~ \qed 
    \end{align*}
\end{proof}

\begin{theorem}
    Assume that Assumptions \ref{assump:prob_prop} and \ref{assump:weak-MVI} hold. Let $\underaccent{\bar}{\gamma} = \min\left(\gamma_x, \gamma_y\right), C = \alpha \left(  \frac{\underaccent{\bar}{\gamma}(1 - \alpha)}{2} + \rho\right)$, the extrapolation parameter, $\alpha$, and the step sizes, $\gamma_x$ and $\gamma_y$, satisfy: 
    \begin{align} \label{eqn:step sizes, NC-PDHG}
        \alpha < 1 + \frac{2\rho}{\underaccent{\bar}{\gamma}}, && \gamma_y \leq \frac{1}{\sqrt{2}L_{\nabla g_2}}, && 
        2\gamma_x \gamma_y \|A\|^2 + \gamma_x^2 L^2_{\nabla f_2} \leq 1 
    \end{align}
    Then, for $z_k = (x_k, y_k)$ generated by Algorithm \ref{alg:NC-PDHG}, it follows that: 
    \begin{equation}
        \sum_{k = 0}^{K-1} \left\|\bar D_{k+1}\right\|^2_{\Z} \leq \frac{\|z_0 - z_*\|^2_{\gamma^{-1} \Z}}{2C}
    \end{equation}
\end{theorem}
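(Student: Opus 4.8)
The plan is to run the standard primal-dual "descent lemma" argument, adapted to the nonconvex setting via the weak MVI assumption and the explicit subgradient elements $\bar D_{k+1}$ produced by Lemma~\ref{lemma:D_k+1}. I would start from the update rules in lines~\ref{alg_pdhg:x}--\ref{alg_pdhg:y} of Algorithm~\ref{alg:NC-PDHG}, which I expect can be rewritten compactly as $z_{k+1} = z_k - \alpha\, \Gamma\, \bar D_{k+1}$, where $\Gamma = \mathrm{diag}(\gamma_x \Id, \gamma_y \Id)$ and $\bar D_{k+1} = (\bar F_{k+1}, \bar G_{k+1})$; indeed the right-hand sides of lines~\ref{alg_pdhg:x}--\ref{alg_pdhg:y} are exactly $-\alpha\gamma_x \bar F_{k+1}$ and $-\alpha\gamma_y \bar G_{k+1}$ after substituting the definitions~\eqref{eqn:F_k+1}--\eqref{eqn:G_k+1}. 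Then the elementary identity $\|z_{k+1}-z_*\|^2_{\gamma^{-1}\Z} = \|z_k - z_*\|^2_{\gamma^{-1}\Z} - 2\alpha\langle \bar D_{k+1}, z_k - z_*\rangle + \alpha^2 \|\Gamma \bar D_{k+1}\|^2_{\gamma^{-1}\Z}$ gives the basic recursion, with the cross term measured in the plain (unweighted) inner product because the $\gamma$'s cancel, and $\alpha^2\|\Gamma \bar D_{k+1}\|^2_{\gamma^{-1}\Z} = \alpha^2(\gamma_x\|\bar F_{k+1}\|^2 + \gamma_y\|\bar G_{k+1}\|^2) \le \alpha^2 \bar\gamma^{-1}\cdot\bar\gamma^2\|\bar D_{k+1}\|^2$... more carefully, it is bounded using $\gamma_x,\gamma_y$; I'd keep it as $\alpha^2(\gamma_x\|\bar F_{k+1}\|^2+\gamma_y\|\bar G_{k+1}\|^2)$ for now.

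Next I would handle the cross term. Since $z_k - z_* = (z_k - \bar z_{k+1}) + (\bar z_{k+1} - z_*)$, I split $\langle \bar D_{k+1}, z_k - z_*\rangle$ into $\langle \bar D_{k+1}, \bar z_{k+1} - z_*\rangle + \langle \bar D_{k+1}, z_k - \bar z_{k+1}\rangle$. For the first piece, Lemma~\ref{lemma:D_k+1} says $\bar D_{k+1} \in \partial_C \Lag(\bar z_{k+1})$ (after assembling~\eqref{eqn:G_k+1_weak_MVI}--\eqref{eqn:F_k+1_weak_MVI} into a point of $\mathrm{gph}\,\partial_C\Lag$ with the dual-block sign convention of~\eqref{eqn:stationary_points}), so the weak MVI inequality~\eqref{eqn:weak-MVI} gives $\langle \bar D_{k+1}, \bar z_{k+1} - z_*\rangle \ge \rho\|\bar D_{k+1}\|^2$. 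For the second piece, I would use the second expressions in~\eqref{eqn:F_k+1} and~\eqref{eqn:G_k+1} to write $z_k - \bar z_{k+1}$ in terms of $\bar D_{k+1}$ and the Lipschitz-gradient error terms, so that $\langle \bar D_{k+1}, z_k - \bar z_{k+1}\rangle$ becomes $\bar\gamma\|\bar D_{k+1}\|^2$ plus inner products of $\bar D_{k+1}$ with $\gamma_x(\nabla f_2(\bar x_{k+1}) - \nabla f_2(x_k))$, $\gamma_y(\nabla g_2(\bar y_{k+1}) - \nabla g_2(y_k))$, and the coupling terms $\gamma_y A(x_k - \bar x_{k+1})$, $\gamma_x A^\top(\cdots)$ — wait, more precisely I'd recover a $\tfrac{\bar\gamma(1-\alpha)}{2}$-type bound after also pulling in the leftover $-\tfrac{\alpha}{2}\|\Gamma\bar D_{k+1}\|^2$ from the quadratic term. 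The arithmetic here is the crux.

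The main obstacle, then, is the bookkeeping that shows the residual quadratic form — collecting $-2\alpha$ times the Lipschitz/coupling cross terms, $\alpha^2$ times the squared-norm term, and using the step-size conditions $\gamma_y \le \tfrac{1}{\sqrt 2 L_{\nabla g_2}}$ and $2\gamma_x\gamma_y\|A\|^2 + \gamma_x^2 L^2_{\nabla f_2} \le 1$ together with Young's inequality $2\langle a,b\rangle \ge -\|a\|^2 - \|b\|^2$ (or a weighted variant) — is nonnegative, leaving behind exactly $2\alpha\bigl(\tfrac{\bar\gamma(1-\alpha)}{2} + \rho\bigr)\|\bar D_{k+1}\|^2 = 2C\|\bar D_{k+1}\|^2$ on the good side. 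Concretely: expand $\|\bar D_{k+1}\|^2$ and $\|z_k - \bar z_{k+1}\|^2$ in terms of the primal/dual blocks, bound the coupling term $2\gamma_x\langle \bar F_{k+1}, A^\top(\cdots)\rangle$ and $2\gamma_y\langle\bar G_{k+1}, A(\cdots)\rangle$ via $\|A\|$ and Young, absorb the $\nabla g_2$-Lipschitz error against a fraction of $\gamma_y\|\bar G_{k+1}\|^2$ using $L_{\nabla g_2}^2\gamma_y^2 \le \tfrac12$, and check the primal side closes with $\gamma_x^2 L_{\nabla f_2}^2 + 2\gamma_x\gamma_y\|A\|^2 \le 1$; one also needs $C > 0$, which is exactly the condition $\alpha < 1 + \tfrac{2\rho}{\bar\gamma}$. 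Having established the per-iteration inequality
\[
2C\,\|\bar D_{k+1}\|^2_{\Z} \le \|z_k - z_*\|^2_{\gamma^{-1}\Z} - \|z_{k+1} - z_*\|^2_{\gamma^{-1}\Z},
\]
I would sum over $k = 0,\dots,K-1$, telescope the right-hand side, drop the nonnegative final term $\|z_K - z_*\|^2_{\gamma^{-1}\Z}$, and divide by $2C$ to obtain the claimed bound $\sum_{k=0}^{K-1}\|\bar D_{k+1}\|^2_{\Z} \le \tfrac{\|z_0 - z_*\|^2_{\gamma^{-1}\Z}}{2C}$.
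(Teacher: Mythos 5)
Your proposal is correct and follows essentially the same route as the paper's proof: rewriting the updates as $z_{k+1} = z_k - \alpha\,\Gamma\,\bar D_{k+1}$, expanding the weighted distance $\|z_{k+1}-z_*\|^2_{\gamma^{-1}\Z}$, splitting the cross term through $\bar z_{k+1}$ so that weak MVI handles $\langle \bar D_{k+1}, \bar z_{k+1}-z_*\rangle$ while the Lipschitz/coupling terms in $\langle \bar D_{k+1}, z_k-\bar z_{k+1}\rangle$ are absorbed by Young's inequality under the stated step-size conditions, and then telescoping. The only detail worth making explicit when you write it out is that passing from $\gamma_x,\gamma_y$ to $\underaccent{\bar}{\gamma}$ in the $(1-\alpha)$ terms uses $\alpha\le 1$ (automatic when $\rho\le 0$ from the condition $\alpha<1+2\rho/\underaccent{\bar}{\gamma}$), exactly as in the paper.
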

\begin{proof} We consider the following Lyapunov function 
\begin{equation*}
    \V(z_{k+1}, z_*) = \frac{1}{2} \|z_{k+1} - z_*\|^2_{\gamma^{-1} \Z} := \frac{1}{2\gamma_x} \|x_{k+1} - x_*\|^2_{\X} + \frac{1}{2\gamma_y} \|y_{k+1} - y_*\|^2_{\Y}
\end{equation*}
\begin{enumerate}
    \item We start by applying Assumption \ref{assump:weak-MVI} on our problem. By Lemma \ref{lemma:D_k+1}, since
\begin{align*}
\bar F_{k+1} &\in \partial_C f(\bar x_{k+1}) + \nabla f_2 (\bar x_{k+1}) + A^\top \bar y_{k+1} \\
\bar G_{k+1} &\in \partial_C g(\bar y_{k+1}) + \nabla g_2 (\bar y_{k+1}) - A \bar x_{k+1}
\end{align*}
then: 
\begin{equation}\label{eqn:PDHG_weak_MVI}
    \left\langle \bar F_{k+1}, \bar x_{k+1} - x_* \right\rangle_{\X} + \left\langle \bar G_{k+1}, \bar y_{k+1} - y_* \right\rangle_{\Y} \geq \rho \left(\left\|\bar F_{k+1}\right\|^2_{\X} + \left\|\bar G_{k+1} \right\|^2_{\Y} \right) 
\end{equation}
    \item We develop the primal quantity, $\|x_{k+1} - x_*\|^2_{\X}$.
\begin{align}
    \frac{1}{2\gamma_x} &\|x_{k+1} - x_*\|^2_{\X} \notag \\
    &\stackrel{\text{Step \ref{alg_pdhg:x}}}{=} \frac{1}{2\gamma_x} \left\|x_k - x_* + \alpha \left( \bar{x}_{k+1} - \hat{x}_{k+1} - \gamma_x\left(\nabla f_2(\bar{x}_{k+1}) + A^\top \bar{y}_{k+1}\right) \right)\right\|^2_{\X} \notag\\ 
    &= \frac{1}{2\gamma_x} \|x_k - x_*\|^2_{\X} + \frac{\alpha^2}{2\gamma_x}\left\|\bar{x}_{k+1} - \hat{x}_{k+1} - \gamma_x \left(\nabla f_2(\bar{x}_{k+1}) + A^\top \bar{y}_{k+1}\right)\right\|^2_{\X} \notag\\ 
    & \qquad\qquad\qquad\qquad + \frac{\alpha}{\gamma_x} \left\langle x_k - x_*, \bar{x}_{k+1} - \hat{x}_{k+1} - \gamma_x \left(\nabla f_2(\bar{x}_{k+1}) + A^\top \bar{y}_{k+1}\right) \right\rangle_{\X} \notag\\ 
    & \stackrel{(\ref{eqn:F_k+1})}{=}  \frac{1}{2\gamma_x} \|x_k - x_*\|^2_{\X} + \frac{\alpha^2 }{2\gamma_x}\left\| -\gamma_x \bar F_{k+1} \right\|^2_{\X} + \frac{\alpha}{\gamma_x} \left\langle x_k - x_*, -\gamma_x \bar F_{k+1}\right\rangle_{\X} \notag\\ 
    & = \frac{1}{2\gamma_x} \|x_k - x_*\|^2_{\X} + \frac{\alpha^2 \gamma_x}{2}\left\| \bar F_{k+1}\right\|^2_{\X} - \alpha \left\langle x_k - \bar x_{k+1}, \bar F_{k+1} \right\rangle_{\X} - \alpha \left\langle \bar x_{k+1} - x_*, \bar F_{k+1} \right\rangle_{\X}
    \label{eq:xx_ncpdhg}
\end{align}
Now, we further study the following scalar term: 
\begin{align*}
    -\alpha & \left\langle x_k - \bar{x}_{k+1}, \bar F_{k+1} \right\rangle_{\X} \\
    &= - \frac{\alpha \gamma_x}{2} \left\|\frac{1}{\gamma_x} (x_k - \bar x_{k+1}) \right\|^2_{\X} - \frac{\alpha \gamma_x}{2} \left\| \bar F_{k+1} \right\|^2_{\X} + \frac{\alpha \gamma_x}{2} \left\| \frac{1}{\gamma_x} (x_k - \bar x_{k+1}) - \bar F_{k+1} \right\|^2_{\X} \\
    &\stackrel{\eqref{eqn:F_k+1}}{=} - \frac{\alpha \gamma_x}{2} \left\|\frac{1}{\gamma_x} (x_k - \bar x_{k+1}) \right\|^2_{\X} - \frac{\alpha \gamma_x}{2} \left\| \bar F_{k+1} \right\|^2_{\X} + \frac{\alpha \gamma_x}{2} \left\| \nabla f_2(x_k) - \nabla f_2(\bar{x}_{k+1}) \right\|^2_{\X} \\
    &\stackrel{\eqref{eqn:f2_Lip}}{\leq} - \frac{\alpha \gamma_x}{2} \left\|\frac{1}{\gamma_x} (x_k - \bar x_{k+1}) \right\|^2_{\X} - \frac{\alpha \gamma_x}{2} \left\| \bar F_{k+1} \right\|^2_{\X} + \frac{\alpha \gamma_x L_{\nabla f_2}^2}{2} \left\| x_k - \bar{x}_{k+1} \right\|^2_{\X} \\
    & = - \frac{\alpha \gamma_x}{2} \left\|\frac{1}{\gamma_x} (x_k - \bar x_{k+1}) \right\|^2_{\X} - \frac{\alpha \gamma_x}{2} \left\| \bar F_{k+1} \right\|^2_{\X} + \frac{\alpha \gamma_x^3 L_{\nabla f_2}^2}{2} \left\| \frac{1}{\gamma_x} (x_k - \bar{x}_{k+1}) \right\|^2_{\X}  \\
    & = \frac{\alpha \gamma_x}{2} \left(\gamma_x^2 L_{\nabla f_2}^2 - 1\right) \left\|\frac{1}{\gamma_x} (x_k - \bar x_{k+1}) \right\|^2_{\X} - \frac{\alpha \gamma_x}{2} \left\| \bar F_{k+1} \right\|^2_{\X}
\end{align*}
Hence, substituting what we got in \eqref{eq:xx_ncpdhg}, we obtain: 
\begin{align*}
    \frac{1}{2\gamma_x} & \|x_{k+1} - x_*\|^2_{\X} \leq \frac{1}{2\gamma_x} \|x_k - x_*\|^2_{\X} + \frac{\alpha \gamma_x}{2} (\alpha - 1)\left\| \bar F_{k+1}\right\|^2_{\X} - \alpha \left\langle \bar x_{k+1} - x_*, \bar F_{k+1} \right\rangle_{\X} \\ 
    &\qquad\qquad\qquad\qquad\qquad\qquad\qquad + \frac{\alpha \gamma_x}{2} \left(\gamma_x^2 L_{\nabla f_2}^2 - 1\right) \left\|\frac{1}{\gamma_x} (x_k - \bar x_{k+1}) \right\|^2_{\X}
\end{align*}
\item Similarly, we develop the dual quantity, $\|y_{k+1} - y_*\|^2_{\Y}$.
\begin{align*}
   & \frac{1}{2\gamma_y}  \|y_{k+1} - y_*\|^2_{\Y} \\
    &\stackrel{\text{Step }\ref{alg_pdhg:y}, \eqref{eqn:G_k+1}}{=}  \frac{1}{2\gamma_y} \|y_k - y_*\|^2_{\Y} + \frac{\alpha^2 \gamma_y}{2}\left\| \bar G_{k+1}\right\|^2_{\Y} - \alpha \left\langle y_k - \bar y_{k+1}, \bar G_{k+1} \right\rangle_{\Y} - \alpha \left\langle \bar y_{k+1} - y_*, \bar G_{k+1} \right\rangle_{\Y}
\end{align*}
Then, we further study the term $ - \alpha \left\langle y_k - \bar{y}_{k+1}, \bar G_{k+1} \right\rangle_{\Y} $:
\begin{align*}
    - \alpha & \left\langle y_k - \bar{y}_{k+1}, \bar G_{k+1} \right\rangle_{\Y} \\
    &= -\frac{\alpha \gamma_y}{2} \left\|\frac{1}{\gamma_y} (y_k - \bar y_{k+1})\right\|^2_{\Y} - \frac{\alpha \gamma_y}{2} \|\bar G_{k+1}\|^2_{\Y} + \frac{\alpha \gamma_y}{2} \left\|\frac{1}{\gamma_y} (y_k - \bar y_{k+1}) - \bar G_{k+1} \right\|^2_{\Y} \\
    &= -\frac{\alpha \gamma_y}{2} \left\|\frac{1}{\gamma_y} (y_k - \bar y_{k+1})\right\|^2_{\Y} - \frac{\alpha \gamma_y}{2} \|\bar G_{k+1}\|^2_{\Y} \\ 
    & \qquad \hspace{3.5cm} \qquad+ \frac{\alpha \gamma_y}{2} \left\| \nabla g_2(y_k) - \nabla g_2(\bar y_{k+1}) + A (\bar x_{k+1} - x_k)\right\|^2_{\Y} \\
    &\leq -\frac{\alpha \gamma_y}{2} \left\|\frac{1}{\gamma_y} (y_k - \bar y_{k+1})\right\|^2_{\Y} - \frac{\alpha \gamma_y}{2} \|\bar G_{k+1}\|^2_{\Y} + \alpha \gamma_y \left\| \nabla g_2(y_k) - \nabla g_2(\bar y_{k+1})\right\|^2_{\Y} \\
    &\qquad \hspace{7cm} \qquad+ \alpha \gamma_y \left\|A (\bar x_{k+1} - x_k)\right\|^2_{\Y} \\
    &\stackrel{\eqref{eqn:g2_Lip}}{\leq} -\frac{\alpha \gamma_y}{2} \left\|\frac{1}{\gamma_y} (y_k - \bar y_{k+1})\right\|^2_{\Y} - \frac{\alpha \gamma_y}{2} \|\bar G_{k+1}\|^2_{\Y} + \alpha \gamma_y L_{\nabla g_2}^2 \left\|y_k - \bar y_{k+1}\right\|^2_{\Y} \\
    & \qquad \hspace{7cm} \qquad + \alpha \gamma_y \|A\|^2 \left\|\bar x_{k+1} - x_k\right\|^2_{\X} \\
    &= \alpha \gamma_y \left(\gamma_y^2 L_{\nabla g2}^2 - 0.5\right) \left\|\frac{1}{\gamma_y} (y_k - \bar y_{k+1})\right\|^2_{\Y} - \frac{\alpha \gamma_y}{2} \|\bar G_{k+1}\|^2_{\Y} + \alpha \gamma_y \|A\|^2 \left\|\bar x_{k+1} - x_k\right\|^2_{\X}
\end{align*}
Substituting, we obtain 
\begin{align*}
    \frac{1}{2\gamma_y} & \|y_{k+1} - y_*\|^2_{\Y} 
    \leq \frac{1}{2\gamma_y} \|y_k - y_*\|^2_{\Y} + \frac{\alpha \gamma_y}{2} (\alpha - 1)\left\| \bar G_{k+1}\right\|^2_{\Y} - \alpha \left\langle \bar y_{k+1} - y_*, \bar G_{k+1} \right\rangle_{\Y} \\
    &\qquad \qquad \qquad + \alpha \gamma_y \left(\gamma_y^2 L_{\nabla g2}^2 - 0.5\right) \left\|\frac{1}{\gamma_y} (y_k - \bar y_{k+1})\right\|^2_{\Y} + \alpha \gamma_y \|A\|^2 \left\|\bar x_{k+1} - x_k\right\|^2_{\X}
\end{align*}
\item Lastly, we combine everything to measure the primal-dual quantity, $\|z_{k+1} - z_*\|^2_{\gamma^{-1} \Z}$:
\begin{align*}
    \frac{1}{2} & \|z_{k+1} - z_*\|^2_{\gamma^{-1} \Z} = \frac{1}{2\gamma_x}\|x_{k+1} - x_*\|^2_{\X} + \frac{1}{2\gamma_y}\|y_{k+1} - y_*\|^2_{\Y} \\
    &\leq \frac{1}{2\gamma_x} \|x_k - x_*\|^2_{\X} + \frac{1}{2\gamma_y} \|y_k - y_*\|^2_{\Y} - \frac{\alpha \gamma_x}{2} (1 - \alpha)\left\| \bar F_{k+1}\right\|^2_{\X} - \frac{\alpha \gamma_y}{2} (1 - \alpha)\left\| \bar G_{k+1}\right\|^2_{\Y}  \\
    & \qquad - \alpha \left\langle \bar x_{k+1} - x_*, \bar F_{k+1} \right\rangle_{\X} - \alpha \left\langle \bar y_{k+1} - y_*, \bar G_{k+1} \right\rangle_{\Y} + \alpha \gamma_y \|A\|^2 \left\|\bar x_{k+1} - x_k\right\|^2_{\X} \\
    &\qquad  + \frac{\alpha \gamma_x}{2} \left(\gamma_x^2 L_{\nabla f_2}^2 - 1\right) \left\|\frac{1}{\gamma_x} (x_k - \bar x_{k+1}) \right\|^2_{\X} + \frac{\alpha \gamma_y}{2} \left(2\gamma_y^2 L_{\nabla g2}^2 - 1\right) \left\|\frac{1}{\gamma_y} (y_k - \bar y_{k+1})\right\|^2_{\Y} \\
    &\leq \frac{1}{2\gamma_x} \|x_k - x_*\|^2_{\X} + \frac{1}{2\gamma_y} \|y_k - y_*\|^2_{\Y} - \alpha \underaccent{\bar}{\gamma} \frac{(1 - \alpha)}{2} \left\| \bar F_{k+1}\right\|^2_{\X} - \alpha \underaccent{\bar}{\gamma} \frac{(1 - \alpha)}{2} \left\| \bar G_{k+1}\right\|^2_{\Y} \\
    & \qquad - \alpha \left\langle \bar x_{k+1} - x_*, \bar F_{k+1} \right\rangle_{\X} - \alpha \left\langle \bar y_{k+1} - y_*, \bar G_{k+1} \right\rangle_{\Y} + \alpha \gamma_y \|A\|^2 \left\|\bar x_{k+1} - x_k\right\|^2_{\X} \\
    &\qquad  + \frac{\alpha \gamma_x}{2} \left(\gamma_x^2 L_{\nabla f_2}^2 - 1\right) \left\|\frac{1}{\gamma_x} (x_k - \bar x_{k+1}) \right\|^2_{\X} + \frac{\alpha \gamma_y}{2} \left(2\gamma_y^2 L_{\nabla g2}^2 - 1\right) \left\|\frac{1}{\gamma_y} (y_k - \bar y_{k+1})\right\|^2_{\Y} \\
    &\leq \frac{1}{2} \|z_k - z_*\|^2_{\gamma^{-1}\Z} - \alpha \underaccent{\bar}{\gamma} \frac{(1 - \alpha)}{2} \left\|\bar D_{k+1}\right\|^2_{\Z} - \alpha \left\langle \bar z_{k+1} - z_*, \bar D_{k+1} \right\rangle_{\Z} \\
    &\qquad  \hspace{2.2cm} \qquad+ \frac{\alpha \gamma_x}{2} \left(2 \gamma_x \gamma_y \|A\|^2 + \gamma_x^2 L_{\nabla f_2}^2 - 1\right) \left\|\frac{1}{\gamma_x} (x_k - \bar x_{k+1}) \right\|^2_{\X} \\ 
    & \qquad \hspace{4.5cm} \qquad+ \frac{\alpha \gamma_y}{2} \left(2\gamma_y^2 L_{\nabla g2}^2 - 1\right) \left\|\frac{1}{\gamma_y} (y_k - \bar y_{k+1})\right\|^2_{\Y} \\
    &\leq \frac{1}{2} \|z_k - z_*\|^2_{\gamma^{-1}\Z} - \alpha \left(\frac{\underaccent{\bar}{\gamma}(1 - \alpha)}{2} + \rho\right) \left\|\bar D_{k+1}\right\|^2_{\Z} + \frac{\alpha \gamma_y}{2} \left(2\gamma_y^2 L_{\nabla g2}^2 - 1\right) \left\|\frac{1}{\gamma_y} (y_k - \bar y_{k+1})\right\|^2_{\Y}  \\
    &\qquad  \hspace{2.2cm} \qquad+ \frac{\alpha \gamma_x}{2} \left(2 \gamma_x \gamma_y \|A\|^2 + \gamma_x^2 L_{\nabla f_2}^2 - 1\right) \left\|\frac{1}{\gamma_x} (x_k - \bar x_{k+1}) \right\|^2_{\X} 
\end{align*}
\end{enumerate}
Thus, taking the parameters according to the conditions in \eqref{eqn:step sizes, NC-PDHG}
\begin{align*}
    \frac{\underaccent{\bar}{\gamma}(1 - \alpha)}{2} + \rho > 0, &&
    2\gamma_y^2 L_{\nabla g2}^2 - 1 < 0, &&
    2 \gamma_x \gamma_y \|A\|^2 + \gamma_x^2 L_{\nabla f_2}^2 - 1 < 0
\end{align*}
implies:
\begin{equation*}
\|\bar D_{k+1}\|^2_{\Z} \leq \frac{1}{2C} \left(\|z_k - z_*\|^2_{\gamma^{-1}\Z} - \|z_{k+1} - z_*\|^2_{\gamma^{-1}\Z}\right)
\end{equation*}
Taking the sum from $k = 0$ to $k = K-1$, and telescoping implies the result. \qed

\end{proof}

\begin{remark}
    By choosing to measure distances in $\Z$ with step-size-dependent norms, it would have been possible to replace $ \|\bar D_{k+1}\|_{\Z}$ by $\|\bar D_{k+1}\|_{\gamma\Z}$ \cite{QEB_SDG}. We did not follow this approach because it implies a dependence of the weak-MVI constant on the step sizes. 
\end{remark}

\subsection{Parameters analysis} \label{subsec:parameters_NC_PDHG}
In this subsection,  we analyze the choices for the step sizes $\gamma_x, \gamma_y$, and the extrapolation parameter $\alpha$ presented in \eqref{eqn:step sizes, NC-PDHG}, in the simplified setting where $f_2 = 0$ and assuming $\rho \leq 0$. The conditions then reduce to:
\begin{align*}
    \alpha < 1 + \frac{2 \rho}{\underaccent{\bar}{\gamma}} && \gamma_y \leq \frac{1}{\sqrt{2}L_{\nabla g_2}} && 2 \gamma_x \gamma_y \|A\|^2 \leq 1
\end{align*}
Rewriting the first condition as $\underaccent{\bar}{\gamma} > \frac{2|\rho|}{1 - \alpha}$, we get the following two main conditions: 
\begin{align*}
    \frac{2|\rho|}{1 - \alpha} < \gamma_y \leq \frac{1}{\sqrt{2}L_{\nabla g_2}} && \frac{2|\rho|}{1 - \alpha} < \gamma_x \leq \frac{1}{2 \gamma_y \|A\|^2}
\end{align*}
To examine whether feasible step sizes exist, we begin with the case $\alpha = 0$, yielding:
\begin{align*}
    2|\rho| < \gamma_y \leq \frac{1}{\sqrt{2}L_{\nabla g_2}} && 2|\rho| < \gamma_x \leq \frac{1}{2 \gamma_y \|A\|^2}
\end{align*}
Let us select $$\gamma_y = 2|\rho| + \varepsilon$$ 
for some $\varepsilon > 0$ satisfying 
$$ 0 < \varepsilon \leq \frac{1}{\sqrt{2}L_{\nabla g_2}}  - 2|\rho|$$
We now substitute this into the upper bound of $\gamma_x$, leading to the condition: $$ 2|\rho| < \gamma_x \leq \frac{1}{(4 |\rho| + 2\varepsilon) \|A\|^2} $$
This inequality has a feasible solution if the right-hand side exceeds the lower bound, that is:
$$\frac{1}{(4 |\rho| + 2\varepsilon) \|A\|^2} - 2|\rho| > 0$$ 
This holds if $$\varepsilon < \frac{1 - 8\rho^2 \|A\|^2}{4 |\rho| \|A\|^2}$$

Lastly, To improve the convergence speed of PDHG-like algorithms, one may consider residual balancing techniques as discussed in \cite{residual_balancing}, which typically require careful tuning of algorithm parameters. In our context, this could be implemented experimentally by scaling the original perturbation $\varepsilon$ to a smaller value by multiplying it by a user-defined constant $c$ for some $c \in (0, 1)$.

Since the existence of feasible parameters is ensured in the base case $\alpha = 0$, we now return to the general case and propose a concrete choice of parameters:
\begin{enumerate}
    \item We check if $\rho^2 < \max \left\{ \frac{1}{8\|A\|^2}, \frac{1}{8L_{\nabla g_2}^2} \right\}$ and proceed if it's true. Otherwise, the problem is too complex to be solved by NC-PDHG (Algorithm \ref{alg:NC-PDHG}).
    \item For some $c \in (0, 1)$, choose 
    $$\varepsilon = c \times \min \left( \frac{1}{\|A\|}, \frac{1 - 8\|A\|^2 \rho^2}{4 \|A\|^2 |\rho|}, \frac{1}{\sqrt{2}L_{\nabla g_2}}  - 2 |\rho| \right)$$ where the first term is inspired by the convex-concave setting (i.e., $\rho \to 0$), and the other two ensure feasibility of $\gamma_x$ and $\gamma_y$.
    \item Define $\gamma_x, \gamma_y$, and $\alpha$ as:
    \begin{align*}
        \gamma_y = 2|\rho| + \varepsilon &&
        \gamma_x = \frac{1}{2 \gamma_y \|A\|^2} &&
        \alpha = 1 + \frac{2\rho}{\underaccent{\bar}{\gamma}}
    \end{align*}
\end{enumerate}
\begin{remark}
    Parameters selection has been set in a way that adapts to both nonconvex-nonconcave and convex-concave problems, meaning that if your saddle point problem is originally convex-convex, our conditions will still be valid. 
\end{remark}

\section{NC-SPDHG algorithm} \label{sec:SPDHG}
In this section, we present the second and main contribution of this paper: a nonconvex–nonconcave coordinate-based primal-dual algorithm. As discussed in the related work subsection \ref{subsec:related_work}, coordinate descent algorithms have been extensively studied in two distinct contexts: convex–concave saddle-point problems \cite{SPDHG,PURE_CD,PDCD_Latafat,PDCD_Fercoq_Bianchi}, and nonconvex optimization problems \cite{BPL,NCCD1,NCCD2}, where the focus lies solely on the primal variable.

To the best of our knowledge, this work constitutes the first proposal of a coordinate-based primal-dual algorithm designed for nonconvex–nonconcave saddle-point problems.

Coordinate-based algorithms typically require a separability assumption: the functions associated with the variables updated in a block-wise or coordinate-wise fashion must be separable. Therefore, in this section, we assume that the objective function $f$ is separable and that $f_2 = 0$. Under these assumptions, problem \eqref{eqn:SPP} simplifies to:

\begin{equation} \label{eqn:SPP_SPDHG} \tag{CD-SPP}
\min_{x \in \X} \max_{y \in \Y} \sum_{i = 1}^m f_i(x_i) + \langle A_i x, y \rangle - g_2(y) - g(y),
\end{equation}
where the primal variable $x$ is split into $m$ blocks, and the dual variable $y$ is updated globally.

We now introduce our Nonconvex–Nonconcave Stochastic Primal-Dual Hybrid Gradient (NC-SPDHG) algorithm, which solves \eqref{eqn:SPP_SPDHG} under Assumption \ref{assum:prob_prop_CD} (problem structure) and Assumption \ref{assump:weak-MVI} (weak Minty variational inequality condition). The proposed algorithm extends the SPDHG algorithm \cite{SPDHG}, originally developed for convex–concave saddle-point problems, to the nonconvex–nonconcave setting.

\begin{assumption}(Problem properties) \label{assum:prob_prop_CD}
    \begin{itemize}
        \item The primal domain is decomposed as $\X = \prod_{i = 1}^m \X_i$ with $\dim(\X) = d$, and each block $\X_i$ satisfies $\dim(\X_i) = d_i$ for $i = 1, \dots, m$, where $\sum_{i = 1}^m d_i = d$.
        \item $f \colon \X \rightarrow \bar \R$ is (possibly nonconvex) proximal-friendly, and block-separable, i.e., $\displaystyle f(x) = \sum_{i = 1}^m f_i(x_i), ~ x_i \in \X_i$.
        \item $A_i \colon \X_i \rightarrow \Y$ is a linear operator, for all $i \in \{1, \dots, m\}$
        \item $g \colon \Y \rightarrow \bar \R$ is (possibly nonconvex) proximal-friendly.
        \item $g_2 \colon \Y \rightarrow \R$ is (possibly nonconvex) differentiable with $L_{\nabla g_2}$-Lipschitz gradient.
    \end{itemize}
\end{assumption}

\begin{algorithm}[H] 
\begin{algorithmic}[1]
\STATE \textbf{Input:} $\gamma_x, \gamma_y, \alpha, \theta > 0$
\STATE \textbf{Initialize:} $(x_0, y_0)$
\WHILE{Stopping Criterion}
    \STATE $\hat{y}_{k+1} = y_k + \gamma_y \left(Ax_k - \nabla g_2(y_k)\right)$
    \STATE $\bar{y}_{k+1} \in \prox_{\gamma_y, g}\left(\hat{y}_{k+1}\right)$
    \STATE $\hat{x}_{k+1} = x_k - \gamma_x A^\top \bar{y}_{k+1}$
    \STATE $\bar{x}_{k+1} \in \prox_{\gamma_x, f}\left(\hat{x}_{k+1}\right)$
    \STATE Draw a block index $i_{k+1} \sim \U\{1, \dots, m\}$
    \STATE $\displaystyle x_{k+1}^j = \begin{cases} (1 - \alpha) x_k^j + \alpha \bar{x}_{k+1}^j, & \text{if} ~~ j \in \X_{i_{k+1}} \\ x_k^j, & \text{otherwise} \end{cases}$
    \STATE $y_{k+1} = (1 - \alpha ) y_k + \alpha \bar{y}_{k+1} + \gamma_y \theta A(x_{k+1} - x_k) + \alpha \gamma_y \left(\nabla g_2(y_k) - \nabla g_2(\bar{y}_{k+1})\right)$ 
\ENDWHILE
\RETURN $(x_{k+1}, y_{k+1})$
\end{algorithmic}
\caption{Nonconvex-Nonconcave Stochastic Primal-Dual Hybrid Gradient (NC-SPDHG)}
\label{alg:NC-SPDHG}
\end{algorithm}

The algorithm exhibits two key features. The first is its \textit{block coordinate-based structure}, wherein only a single block of the primal variable is updated at each iteration. This design significantly reduces the per-iteration computational cost, making the algorithm particularly suitable for large-scale problems. Although this approach may require a larger number of iterations to achieve convergence compared to full-coordinate methods, the overall computational burden is often lower. Intuitively, this approach resembles solving a sequence of lower-dimensional subproblems instead of tackling a single high-dimensional one—akin to solving $n$ one-dimensional optimization problems rather than a single $n$-dimensional problem.
The second feature is the \textit{stochastic dual extrapolation}, where the dual variable is updated not only via a proximal step but also adjusted using a correction based on the randomly updated coordinates of the primal variable. This is achieved through the term $A(x_{k+1} - x_k)$, which captures the effect of updating a single coordinate in the primal iterate. As a result, the dual extrapolation incorporates stochastic information from the primal side, enabling efficient coordinate-wise updates. 

\begin{remark}
A key distinction between the dual updates in Algorithm \ref{alg:NC-PDHG} and Algorithm \ref{alg:NC-SPDHG}, in the case where $f_2 = 0$, lies in the extrapolation terms. Specifically, the dual update in NC-PDHG involves the term $\gamma_y A(\bar x_{k+1} - x_k)$, while in NC-SPDHG it involves $\gamma_y \theta A(x_{k+1} - x_k)$. Importantly, the latter is significantly cheaper to compute, as it only requires evaluating the change in a single block (due to the block update rule), whereas the former requires computing over the full extrapolated vector $\bar x_{k+1}$. This makes the NC-SPDHG algorithm more computationally efficient in practice, particularly in large-scale settings.
\end{remark}

The convergence of the algorithm is provided in the next theorem: 

\begin{theorem} \label{thm:NC-SPDHG}
    Assume that Assumption \ref{assump:weak-MVI} and \ref{assum:prob_prop_CD} hold. Let the step sizes, $\gamma_x$ and $\gamma_y$, and the extrapolation parameter, $\alpha$, satisfy: 
\begin{subequations} \label{eqn:SPDHG_param_cond}
    \begin{align}
        \gamma_y &\leq \frac{1}{\sqrt{2} L_{\nabla g_2}} \label{eqn:gammay_g2} \\
         \alpha &< 1 + \frac{2\rho}{\gamma_y} \label{eqn:gammay_alpha} \\
        0 & <\rho + \gamma_x \left(1 - \frac{\alpha}{2}\right) + \gamma_x^2 \gamma_y \left(\left(1 - \frac{\alpha}{2}\right)\|A\|^2 + \frac{m\alpha}{2} \sup_j \|Ae_j\|^2\right) \label{eqn:gammax_gammay}
    \end{align}
\end{subequations}
    Then, for $z_k = (x_k, y_k)$ generated by Algorithm \ref{alg:NC-SPDHG}, it follows that:
\begin{equation}
    \sum_{k = 0}^{K-1} \E\left[\left\|\bar D_{k+1}\right\|^2_{\Z}\right] \leq \frac{1}{2\alpha C}\left(\frac{m}{\gamma_x} \|x_{0} - x_*\|^2_{\X} + \frac{1}{\gamma_y} \|y_{0} - y_*\|^2_{\Y}\right)
\end{equation}
where $C = \min(C_x, C_y), C_x = \rho + \gamma_x \left(1 - \frac{\alpha}{2}\right) - \gamma_x^2 \gamma_y \left(\left(1-\frac{\alpha}{2}\right)\|A\|^2 + \alpha m \sup_\ell \frac{\|A_\ell\|^2}{2}\right)$, $C_y = \rho + \frac{\gamma_y}{2} (1-\alpha)$ and $\bar D_{k+1} \in \partial_C\mathcal L(\bar z_{k+1})$ is defined in Lemma~\ref{lemma:D_k+1}.
\end{theorem}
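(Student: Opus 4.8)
The plan is to reproduce the structure of the NC-PDHG analysis, but with a block-weighted Lyapunov function and a careful treatment of the conditional expectation over the random block $i_{k+1}$. I would work with
\[
\V(z_{k+1}, z_*) = \frac{m}{2\gamma_x}\|x_{k+1} - x_*\|^2_{\X} + \frac{1}{2\gamma_y}\|y_{k+1} - y_*\|^2_{\Y},
\]
so that the claimed bound is exactly $\frac{1}{\alpha C}\,\V(z_0,z_*)$. By Lemma~\ref{lemma:D_k+1} specialized to $f_2 = 0$ one has $\bar F_{k+1} = \frac{1}{\gamma_x}(x_k - \bar x_{k+1})$ and $\bar G_{k+1}$ given by \eqref{eqn:G_k+1}, with $\bar D_{k+1} = (\bar F_{k+1},\bar G_{k+1}) \in \partial_C\Lag(\bar z_{k+1})$; crucially $\bar x_{k+1},\bar y_{k+1}$, hence $\bar D_{k+1}$, are $\mathscr{G}_k$-measurable, whereas $x_{k+1},y_{k+1}$ depend on $i_{k+1}$.

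First I would simplify the dual update: substituting $\bar y_{k+1} - y_k = -\gamma_y \bar G_{k+1} + \gamma_y(\nabla g_2(\bar y_{k+1}) - \nabla g_2(y_k)) - \gamma_y A(\bar x_{k+1} - x_k)$ into the dual step of Algorithm~\ref{alg:NC-SPDHG}, the $\nabla g_2$-difference cancels and one gets $y_{k+1} - y_k = -\alpha\gamma_y \bar G_{k+1} - \alpha\gamma_y A(\bar x_{k+1} - x_k) + \gamma_y\theta A(x_{k+1} - x_k)$; with the natural choice $\theta = m$ and $\E[x_{k+1}-x_k\mid\mathscr{G}_k] = \frac{\alpha}{m}(\bar x_{k+1}-x_k)$, the last two terms cancel in conditional expectation. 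Then I would expand each half of $\V$ and take $\E[\cdot\mid\mathscr{G}_k]$. For the primal half, since only block $i_{k+1}$ moves, $\E[\|x_{k+1}-x_*\|^2_{\X}\mid\mathscr{G}_k] = \|x_k-x_*\|^2_{\X} + \tfrac{2\alpha}{m}\langle x_k-x_*,\bar x_{k+1}-x_k\rangle + \tfrac{\alpha^2}{m}\|\bar x_{k+1}-x_k\|^2$; multiplying by $\tfrac{m}{2\gamma_x}$, using $\bar x_{k+1}-x_k = -\gamma_x\bar F_{k+1}$ and the split $\langle x_k-x_*,\bar F_{k+1}\rangle = \gamma_x\|\bar F_{k+1}\|^2 + \langle\bar x_{k+1}-x_*,\bar F_{k+1}\rangle$ gives the primal contribution $-\alpha\gamma_x(1-\tfrac{\alpha}{2})\|\bar F_{k+1}\|^2_{\X} - \alpha\langle\bar x_{k+1}-x_*,\bar F_{k+1}\rangle_{\X}$. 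For the dual half, $\tfrac{1}{2\gamma_y}\E[\|y_{k+1}-y_*\|^2\mid\mathscr{G}_k]$ splits into $-\alpha\langle y_k-y_*,\bar G_{k+1}\rangle$ plus the quadratic term, which by the dual identity equals $\tfrac{\alpha^2\gamma_y}{2}\|\bar G_{k+1}\|^2 - \tfrac{\alpha^2\gamma_y}{2}\|A(\bar x_{k+1}-x_k)\|^2 + \tfrac{\alpha^2\gamma_y m}{2}\sum_j\|A_j(\bar x_{k+1}^j-x_k^j)\|^2$ once the $\bar G_{k+1}$-cross-terms cancel. I would then bound $\|A(\bar x_{k+1}-x_k)\|^2 \le \gamma_x^2\|A\|^2\|\bar F_{k+1}\|^2$ and $\sum_j\|A_j(\bar x_{k+1}^j-x_k^j)\|^2 \le \gamma_x^2(\sup_\ell\|A_\ell\|^2)\|\bar F_{k+1}\|^2$, split $\langle y_k-y_*,\bar G_{k+1}\rangle$ through $\bar y_{k+1}$, and — exactly as in the NC-PDHG proof — process $\langle y_k-\bar y_{k+1},\bar G_{k+1}\rangle$ with the three-term polarization identity and $L_{\nabla g_2}$-Lipschitzness, producing a term $\tfrac{\alpha\gamma_y}{2}(2\gamma_y^2 L_{\nabla g_2}^2 - 1)\|\tfrac1{\gamma_y}(y_k-\bar y_{k+1})\|^2$ that is non-positive under \eqref{eqn:gammay_g2} and may be discarded.

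Summing the two halves, the remaining inner products assemble into $-\alpha\langle\bar z_{k+1}-z_*,\bar D_{k+1}\rangle_{\Z}$, which Assumption~\ref{assump:weak-MVI} applied to $\bar D_{k+1}\in\partial_C\Lag(\bar z_{k+1})$ bounds above by $-\alpha\rho\|\bar D_{k+1}\|^2_{\Z}$. Collecting the coefficients of $\|\bar F_{k+1}\|^2_{\X}$ and $\|\bar G_{k+1}\|^2_{\Y}$ then yields
\[
\E[\V(z_{k+1},z_*)\mid\mathscr{G}_k] \le \V(z_k,z_*) - \alpha C_x\|\bar F_{k+1}\|^2_{\X} - \alpha C_y\|\bar G_{k+1}\|^2_{\Y} \le \V(z_k,z_*) - \alpha C\|\bar D_{k+1}\|^2_{\Z},
\]
with $C_x,C_y$ as in the statement and $C = \min(C_x,C_y)$; conditions \eqref{eqn:gammay_alpha} and \eqref{eqn:gammax_gammay} are precisely what guarantee $C_y > 0$ and $C_x > 0$. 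Taking total expectations, telescoping over $k=0,\dots,K-1$, and discarding the non-negative $\E[\V(z_K,z_*)]$ gives the claimed inequality.

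The delicate point — and the reason the analysis is nontrivial — is the stochastic bookkeeping in the dual quadratic term: unlike the deterministic case, $\E[\|A(x_{k+1}-x_k)\|^2\mid\mathscr{G}_k] = \tfrac{\alpha^2}{m}\sum_j\|A_j(\bar x_{k+1}^j-x_k^j)\|^2$ is a sum of squared \emph{block} norms rather than the square of a single sum, and this mismatch, magnified by the unbiasing factor $\theta = m$, is what produces the extra $\tfrac{m\alpha}{2}\sup_\ell\|A_\ell\|^2$ penalty inside $C_x$ and forces the $m/\gamma_x$ scaling of the primal Lyapunov weight. Guessing the correct potential and the exact form of \eqref{eqn:SPDHG_param_cond} a priori is hard, which is why the derivation is carried out with the help of \texttt{PEPit} and the automated Lyapunov-function framework; the argument above is the human-readable certificate it returns.
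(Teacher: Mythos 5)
Your proposal is correct and follows essentially the same route as the paper's proof: the same Lyapunov function $\frac{m}{2\gamma_x}\|x_{k+1}-x_*\|^2_{\X}+\frac{1}{2\gamma_y}\|y_{k+1}-y_*\|^2_{\Y}$, the same identification of $y_{k+1}-y_k$ with $-\alpha\gamma_y H_{k+1}$ and the unbiasedness/variance computations (which the paper isolates in Lemma~\ref{lem:NC-SPDHG_expec} and you re-derive inline), the same polarization treatment of $\langle y_k-\bar y_{k+1},\bar G_{k+1}\rangle$ discarded via \eqref{eqn:gammay_g2}, the same application of the weak MVI to $\bar D_{k+1}$, and the same constants $C_x, C_y$ followed by telescoping. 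Your reading of \eqref{eqn:gammax_gammay} as the condition $C_x>0$ (i.e.\ with a minus sign in front of the $\gamma_x^2\gamma_y$ term) is exactly what the paper's proof uses, so your argument matches the intended statement.
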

\begin{proof}
    See Section \ref{sec:SPDHG:convergence}
\end{proof}

\subsection{Parameters analysis} \label{subsec:parameters_NC_SPDHG}
In this subsection,  we analyze the choices of the step sizes $\gamma_x, \gamma_y$, and the extrapolation parameter $\alpha$ presented in \eqref{eqn:SPDHG_param_cond}. 

From conditions \eqref{eqn:gammay_g2} and \eqref{eqn:gammay_alpha}, we obtain the following feasibility range for $\gamma_y$:
\begin{equation} \label{eqn:gammay_bounds}
    \frac{2|\rho|}{1 - \alpha} < \gamma_y \leq \frac{1}{\sqrt{2} L_{\nabla g_2}}
\end{equation}
To check the feasibility of these conditions, we consider the simplified case $\alpha = 0$. Substituting this into \eqref{eqn:gammax_gammay} and \eqref{eqn:gammay_bounds}, we obtain:
\begin{align*}
   |\rho| - \gamma_x + \gamma_y\gamma_x^2\|A\|^2 < 0 && 2|\rho| < \gamma_y \leq \frac{1}{\sqrt{2} L_{\nabla g_2}}
\end{align*}
Hence, we select $\gamma_y = 2|\rho| + \varepsilon$ for some $\varepsilon > 0$ satisfying $ 0 < \varepsilon \leq \frac{1}{\sqrt{2}L_{\nabla g_2}}  - 2|\rho|$.
Substituting this into the first inequality, we search for $\gamma_x$ satisfying: 
$$ |\rho| - \gamma_x + (\varepsilon + 2|\rho|)\|A\|^2 \gamma_x^2 < 0$$ 
This inequality admits a solution if the discriminant $\Delta$ of the corresponding quadratic equation is positive:
\begin{align*}
    \Delta := 1 - 4|\rho| \left(\varepsilon + 2|\rho|\right)\|A\|^2 > 0 \iff \varepsilon < \frac{1 - 8\|A\|^2 \rho^2}{4 \|A\|^2 |\rho|}
\end{align*}
If this condition holds, then the set of valid values for $\gamma_x$ is given by:
\begin{equation*}
    \frac{1 - \sqrt{\Delta}}{(2\varepsilon + 4|\rho|)\|A\|^2} < \gamma_x < \frac{1 + \sqrt{\Delta}}{(2\varepsilon + 4|\rho|)\|A\|^2}
\end{equation*}
Having shown that a feasible set of step sizes exists when $\alpha = 0$, we now turn back to the full conditions and provide a concrete parameter selection procedure:
\begin{enumerate}
    \item We check if $\rho^2 < \max \left\{ \frac{1}{8\|A\|^2}, \frac{1}{8L_{\nabla g_2}^2} \right\}$ and proceed if it's true. Otherwise, the problem is too complex to be solved by NC-SPDHG.
    \item For some $c \in (0, 1)$, choose 
    $$\varepsilon = c \times \min \left( \frac{1}{\|A\|}, \frac{1 - 8\|A\|^2 \rho^2}{4 \|A\|^2 |\rho|}, \frac{1}{\sqrt{2}L_{\nabla g_2}}  + 2\rho \right)$$ 
    This ensures that $\gamma_y$ remains feasible, the discriminant $\Delta$ is positive, and the choice remains consistent with what is known from the convex case (i.e., $\rho \to 0$). Moreover, the parameter $c$ serves as a tuning knob that influences the recalibration of the step sizes to achieve better residual balancing, as discussed in subsection \ref{subsec:parameters_NC_PDHG} and in line with the approach in \cite{residual_balancing}.
    \item Define $\gamma_x, \gamma_y$, and $\alpha$ as: 
    \begin{align*}
        \gamma_y &= 2|\rho| + \varepsilon \\ 
        \gamma_x &= \frac{1}{2\gamma_y\|A\|^2} \\ 
        \alpha &= \min \left( 1 - \frac{2|\rho|}{\gamma_y}, \frac{2(\gamma_x - \gamma_x^2 \gamma_y \|A\|^2 - |\rho|)}{\gamma_x + \gamma_x^2 \gamma_y (m \sup_j \|A_j\|^2 - \|A\|^2)}\right)
    \end{align*}
\end{enumerate}
\begin{remark}
    Parameters selection has been set in a way that adapts to both nonconvex-nonconcave and convex-concave problems, meaning that if your saddle point problem is originally convex-convex, our conditions will still be valid. 
\end{remark}
\section{PEPiting NC-SPDHG} \label{sec:PEPit}
In this section, we recount the development process of our NC-SPDHG algorithm, guided by recent advances in convergence analysis techniques introduced in \cite{fercoq2024defininglyapunovfunctionssolution}. Specifically, we leverage the \texttt{PEPit} \cite{PEPit} performance estimation toolbox in combination with the automated Lyapunov function search methodology proposed in \cite{upadhyaya_Auto_Lyapunov}. We begin by outlining the motivation behind adopting these tools in our analysis.

\subsection{Introduction}
To analyze the convergence of complex optimization algorithms, a widely adopted technique is to construct a Lyapunov function — a nonnegative function whose value strictly decreases across iterations — along with a residual function that captures a quantifiable decrease in the objective. Mathematically, this entails identifying a Lyapunov function $\mathcal{V} \geq 0$ such that:
\begin{equation} \label{eqn:Lyapunov_inequality}
\mathcal{V}(f, \mathcal{A}(f, x)) + \mathcal{R}(f, x) \leq \mathcal{V}(f, x), \quad \forall f \in \mathcal{F}, ~ \forall x \in \mathcal{X},
\end{equation}
where $\mathcal{A}(f, x)$ is the outcome of one iteration of algorithm $\mathcal{A}$ to a function $f$ at point $x$, and $\mathcal{R}$ is a residual function.

Traditionally, such Lyapunov inequalities are derived manually by combining algorithmic update rules with known structural properties of the objective. However, this process is highly problem-dependent and often requires considerable analytical effort.

In our work, we address the challenge of designing a coordinate-based primal-dual algorithm for nonconvex–nonconcave saddle point problems, a setting where convergence guarantees are scarce. While previous methods such as PURE-CD \cite{PURE_CD} and SPDHG \cite{SPDHG} have been developed for convex–concave problems, and algorithms like NP-PDEG \cite{NP_PDEG} target nonconvex–nonconcave regimes, adapting or combining their analysis techniques to get an algorithm that works proved insufficient for our setting. This difficulty led us to adopt a recent methodology introduced in \cite{upadhyaya_Auto_Lyapunov,fercoq2024defininglyapunovfunctionssolution}, which reformulates the search for a Lyapunov function as a solution to a performance estimation saddle point problem. This reformulation allows one to automatically verify the existence of a suitable Lyapunov function and hence numerically validate convergence guarantees.

This line of work builds on the theory of Performance Estimation Problems (PEPs) \cite{taylorthesis,PEPit1,PEPit2}, which is implemented in the \texttt{PEPit} Python library \cite{PEPit}. Given a first-order method and a problem class, PEPit automatically reformulates worst-case performance estimation as a semi-definite program (SDP).

Further developments by \cite{upadhyaya_Auto_Lyapunov} demonstrated that PEPs can also be leveraged to automatically detect the existence of quadratic Lyapunov functions, thereby enabling automated convergence analysis. For instance, their framework significantly broadened the feasibility range of step sizes for which convergence of the Chambolle–Pock algorithm \cite{larger_range_chambolle} could be certified.

Building upon this, the search for quadratic Lyapunov functions was reinterpreted in \cite{fercoq2024defininglyapunovfunctionssolution}  as a convex-concave saddle point problem within the PEP framework. This problem can be reformulated using PEPit and solved using the DSP-CVXPY solver \cite{DSP-CVXPY}.  Formally, the problem reduces to:
\begin{equation} \label{eqn:quad_Lyapuov}
    \min_{\{\V \in \mathcal{Q}\}} \max_{\{f \in \F, x \in \X\}} \V(f, \mathcal{A}(f, x)) + \mathcal{R}(f, x) - \V(f, x) \leq 0 
\end{equation}
where $\mathcal{Q}$ is a parametric class of quadratic Lyapunov functions. The inner maximization problem can be formulated as a performance estimation problem whose constraints (semidefinite, linear inequalities, and equalities) are automatically generated by \texttt{PEPit}, enabling the overall expression to be cast as a bilinear convex-concave saddle point problem involving semi-definite variable, which is solvable using DSP-CVXPY \cite{DSP-CVXPY}.  We refer the reader to~\cite{fercoq2024defininglyapunovfunctionssolution} for more details about the reformulation and its implementation within the PEP framework.

\subsection{Negative results in \texttt{PEPit}}
Trying to combine various analysis techniques from both the convex-concave and nonconvex-nonconcave frameworks proved insufficient for our setting, as previously discussed. Yet, our journey toward designing a convergent coordinate-based algorithm using \texttt{PEPit} was far from straightforward.

In this subsection, we share one of our unsuccessful attempts to construct such an algorithm for solving the saddle point problem \eqref{eqn:SPP_SPDHG}. Interestingly, this version failed to converge even in the convex setting, where both $f$ and $g$ are convex. 

\begin{algorithm}[H] 
\begin{algorithmic}[1]
\STATE \textbf{Input:} $\gamma_x, \gamma_y, \alpha, \theta > 0$
\STATE \textbf{Initialize:} $(x_0, y_0)$
\WHILE{Stopping Criterion}
    \STATE $\hat{y}_{k+1} = y_k + \gamma_y \left(Ax_k - \nabla g_2(y_k)\right)$
    \STATE $\bar{y}_{k+1} = \prox_{\gamma_y, g}\left(\hat{y}_{k+1}\right)$
    \STATE $\hat{x}_{k+1} = x_k - \gamma_x A^\top \bar{y}_{k+1}$
    \STATE $\bar{x}_{k+1} = \prox_{\gamma_x, f}\left(\hat{x}_{k+1}\right)$
    \STATE Draw $i_{k+1} \sim \U\{1, \dots, m\}$
    \STATE $\displaystyle x_{k+1}^j = \begin{cases} (1 - \alpha) x_k^j + \alpha \bar{x}_{k+1}^j, & \text{if} ~~ j = i_{k+1} \\ x_k^j, & \text{otherwise} \end{cases}$
    \STATE $y_{k+1} = (1 - \alpha ) y_k + \alpha \bar{y}_{k+1} + \alpha \gamma_y \theta A(x_{k+1} - x_k) + \alpha \gamma_y \left(\nabla g_2(y_k) - \nabla g_2(\bar{y}_{k+1})\right)$ 
\ENDWHILE
\RETURN $(x_{k+1}, y_{k+1})$
\end{algorithmic}
\caption{Failed algorithm}
\label{alg:failed}
\end{algorithm}
Using the methodology discussed earlier, we attempted to analyze this algorithm with \texttt{PEPit}, but were unable to find a Lyapunov function satisfying condition \eqref{eqn:Lyapunov_inequality}. Upon deeper investigation, we identified the critical issue: the presence of the term $\alpha \gamma_y \theta A (x_{k+1} - x_k)$ in the $y_{k+1}$-update. 

Intuitively, we want the updates of NC-SPDHG to mimic those of the deterministic NC-PDHG algorithm \textit{in expectation}. In particular, this means that the NC-SPDHG update for $y_{k+1}$ should follow the same structure on average as the corresponding NC-PDHG update. As a result, the term $\alpha \gamma_y \theta A(x_{k+1} - x_k)$ should not be scaled by $\alpha$, contrary to what we initially implemented.

Indeed, as shown in Lemma \ref{lem:NC-SPDHG_expec}, taking the expectation of this term reveals that the correct form is $\gamma_y \theta A(x_{k+1} - x_k)$, without the additional $\alpha$ factor. Incorporating this correction was a key insight that led to the successful design of the convergent NC-SPDHG algorithm presented in Section \ref{sec:SPDHG}.

\subsection{Coding randomized algorithms in PEPit}
Coding (randomized) algorithms to solve optimization problems for numerical illustrations differs significantly from implementing them in \texttt{PEPit} for the purpose of analyzing convergence. This distinction becomes even trickier when the algorithm includes randomization.

In this subsection, we first outline the general differences when coding \textit{any} algorithm. We then highlight the additional considerations specific to \textit{randomized} algorithms, using the failed algorithm (Algorithm~\ref{alg:failed}) as an illustrative example.

\vspace{1em}
\textbf{General Difference (for any algorithm):}
In practical implementations, we typically run an algorithm over many iterations to observe convergence behavior. However, in \texttt{PEPit}, the goal is not to simulate long-term behavior but rather to perform worst-case analysis via Lyapunov functions. Specifically, \texttt{PEPit} attempts to certify convergence by searching for a valid Lyapunov function that decreases between two successive iterates.

Thus, instead of running the algorithm for many steps, we define a single iteration: we define the starting point, say $(x_0, y_0)$, then apply one iteration of the algorithm to obtain $(x_1, y_1)$. Note \texttt{PEPit} looks for the worst case behavior, optimizing over the problem class as well as the initialization $(x_0, y_0)$. Hence, if a suitable Lyapunov function is found to decrease from $(x_0, y_0)$ to $(x_1, y_1)$, it implies that the same function is valid for all subsequent iterates under the same update rule.

\vspace{1em}
\textbf{Additional Difference (for randomized algorithms):}
In the case of randomized algorithms, defining only a single iteration introduces a subtle issue: drawing a random variable just once has no statistical significance. Therefore, instead of sampling a random variable, \texttt{PEPit} requires that we define the behavior of the algorithm across all possible outcomes of the randomness. That is, we must explicitly define how the algorithm updates its state for each possible realization of the random variable.

To make this distinction concrete, we consider the failed algorithm (Algorithm~\ref{alg:failed}) and present two implementations:

\begin{itemize}
    \item A standard Python implementation used for numerical experiments, where randomness is realized by sampling. 
    \item A \texttt{PEPit}-style implementation, in which all possible outcomes of the randomized update rule are explicitly defined to enable worst-case analysis.
\end{itemize}
We begin with the standard implementation used for numerical testing.

\begin{lstlisting}[style=fancy1, 
caption={\textbf{Listing 1} Code of Algorithm \ref{alg:NC-SPDHG} (NC-SPDHG) for numerical illustrations},
label={lst:practice}
]
import numpy as np

# Define the proximal of f and g, with s being the corresponding step size
def prox_f(x, s):
    ...
    
def prox_g(y, s):
    ...

# Define the gradient of g2 
def grad_g2(y):
    ...

def failed_algorithm(xo, yo, prox_f, prox_g, grad_g2, gammax, gammay, alpha, theta, stopping_criterion):
    x, y = xo, yo
    m = len(x)
    
    while stopping_criterion:
        grad_g2_y = grad_g2(y)
        # Dual forward step
        y_hat = y + gammay * (A @ x - grad_g2_y)
        # Dual backward step
        y_bar = prox_g(y_hat, gammay)

        # Draw a random variable
        i = np.random.randint(0, m)

        # Primal-coordinate forward step
        x_hat_i = x[i] - gammax * A[:, i] @ y_bar
        # Primal-coordinate forward step
        x_bar_i = prox_f(x_hat_i, gammax, i)
        
        # Primal extrapolation 
        x_new_i = (1 - alpha) * x[i] + alpha * x_bar_i
        # Dual extrapolation 
        y = (1 - alpha) * y + alpha * y_bar \
            + alpha * theta * gammay * A[i] * (x_new_i - x[i]) \
            + alpha * gammay * (grad_g2_y - grad_g2(y_bar))
            
        x[i] = x_new_i
        
    return x, y
\end{lstlisting}
We now present, step by step, how to implement the failed algorithm (Algorithm \ref{alg:failed}) within the \texttt{PEPit} framework. We begin by including the appropriate \texttt{Python} imports:

\begin{lstlisting}[style=fancy2]
import numpy as np
from PEPit import PEP
from PEPit.functions import ConvexFunction, SmoothFunction
from PEPit.operators import LinearOperator
from PEPit.primitive_steps import proximal_step
\end{lstlisting}

To set up our problem, we start by specifying the parameter values. Note the following points:
\begin{itemize}
    \item We select $m = 2$, representing the simplest case for the saddle point problem \eqref{eqn:SPP_SPDHG}. If the algorithm converges in this basic scenario, it is a promising indication that it may also converge for any $m > 2$.
    \item The convergence or divergence of the algorithm depends on the particular choice of parameters. That is, with different values of some or all of the parameters we may have different results.
\end{itemize}
\begin{lstlisting}[style=fancy2]
# Define the paramters, for instance:
m = 2
rho = -0.07
gammax = 0.5
gammay = 0.5
alpha = 0.75
theta = m
\end{lstlisting}
Then, we initialize a PEP object, which allows manipulating the forthcoming ingredients of the PEP, such as functions and iterates. 
\begin{lstlisting}[style=fancy2]
problem = PEP()

# To maintain separability, we define the following lists of lists
f, M, A, AT = [[]] * m, [[]] * m, [[]] * m, [[]] * m

for i in range(m):
    f[i] = problem.declare_function(ConvexFunction)
    M[i] = problem.declare_function(LinearOperator, L = 1)
    A[i] = M[i].gradient
    AT[i] = M[i].T.gradient
g2 = problem.declare_function(SmoothFunction, L = 1)
g = problem.declare_function(ConvexFunction)
\end{lstlisting}
To interpret different spaces within \texttt{PEPit}, we need to partition its problem space accordingly. For example, when using \texttt{PEPit} for saddle point problems solved by deterministic algorithms such as PDHG, we partition the space into two subspaces representing the primal and dual spaces. However, for coordinate-based algorithms, the primal space itself must be further sub-partitioned — in our case, into $m$ primal blocks, resulting in $m$-primal blocks plus one dual block. That is:
\begin{lstlisting}[style=fancy2]
partition = problem.declare_block_partition(d = 1+m)
\end{lstlisting}
Next, we define the initial point, with particular attention to correctly extracting the primal and dual blocks. 
\begin{lstlisting}[style=fancy2]
z = problem.set_initial_point('z')
x = [partition.get_block(z, j) for j in range(m)]
y = partition.get_block(z, m)
\end{lstlisting}
Now that everything is set, we proceed to define one iteration of the failed algorithm in \texttt{PEPit}, which we break down into three steps:
\begin{enumerate}
    \item The dual forward-backward step.
\begin{lstlisting}[style=fancy2,
caption={\textbf{Listing 2a} Dual forward-backward step in Algorithm \ref{alg:failed} according to \texttt{PEPit}}]
Ax_dev = [[]] * m
for i in range(m):
    Ax_dev[i] = A[i](x[i])
Ax = np.sum([Ax_dev[i] for i in range(m)])

grad_g2_y = g2.gradient(y)

# Dual forward step
y_hat = y + gammay * (Ax - grad_g2_y)

# Dual backward step
y_bar, _, _ = prox(y_hat, g, gammay)
\end{lstlisting}
    \item The primal forward-backward step.
\begin{lstlisting}[style=fancy2,
caption={\textbf{Listing 2b} Primal forward-backward step in Algorithm \ref{alg:failed} according to \texttt{PEPit}}]
ATy_bar = [[]] * m
for i in range(m):
    ATy_bar[i] = AT[i](y_bar)
    
x_hat, x_bar, diff_x_bar = [[]] * m, [[]] * m, [[]] * m
for i in range(m):
    # Primal forward step
    x_hat[i] = x[i] - gammax * ATy_bar[i]

    # Primal backward step
    x_bar[i], _, _ = prox(x_hat[i], f[i], gammax)
\end{lstlisting}
    \item Random extrapolation steps: We draw a random variable $l \sim \U\{0, \dots, m-1\}$ and explicitly enumerate all possible outcomes of the algorithm corresponding to each value of $l$.
\begin{lstlisting}[style=fancy2,
caption={\textbf{Listing 2c} Primal-Dual extrapolation steps in Algorithm \ref{alg:failed} according to \texttt{PEPit}}]
x1, y1 = [[]] * m, [[]] * m
grad_g2_y_bar = g2.gradient(y_bar)

for l in range(m):
    x1[l] = [[]] * m

    # Primal extrapolation
    for i in range(m):
        if i == l:
            x1[l][i] = (1 - alpha) * x[i] + alpha * xb1[i]
        else:
            x1[l][i] = x[i]

    # Dual extrapolation
    Ax1_l = np.sum([A[i](x1[l][i]) for i in range(m)])
    y1[l] = (1 - alpha) * y + alpha * y_bar + \
            alpha * gammay * theta * (Ax1_l - Ax) + \ 
            alpha * gammay * (grad_g2_y - grad_g2_y_bar)
\end{lstlisting}
\end{enumerate}

As discussed earlier, removing the factor $\alpha$ from the term $\alpha \gamma_y \theta A(x_{k+1} - x_k)$in the update of $y_{k+1}$ leads to a convergent algorithm. This adjustment formed the basis of our initial version of Algorithm \ref{alg:NC-SPDHG} (NC-SPDHG), which assumed partial convexity by requiring $f$ and $g$ to be convex. Subsequently, we succeeded in relaxing this convexity assumption, allowing the algorithm to handle fully nonconvex-nonconcave Lagrangian. We provide a detailed explanation of this extension in the following subsection.
    
\subsection{Nonconvex proximal operators}
Before arriving at the final version of Algorithm \ref{alg:NC-SPDHG} (NC-SPDHG), we initially assumed that both $f$ and $g$ were convex. We refer to this preliminary case as \textit{version 1} of Algorithm \ref{alg:NC-SPDHG}. The purpose of this subsection is to present the approach we followed to relax the convexity assumption in version 1 while still ensuring the convergence of the algorithm.

We get inspiration by the approach introduced in \cite{gribonval2020characterization}, where the authors characterize proximity operators of (possibly nonconvex) penalties as sub-differentials of some convex potentials, as stated in Theorem \ref{thm:nonconvex_prox} presented below. Extending Moreau's result \cite[Proposition 1]{gribonval2020characterization} to possibly nonconvex functions. 

\begin{theorem} \cite[Theorem 1]{gribonval2020characterization} \label{thm:nonconvex_prox}
    Let $\mathcal{H}$ be a Hilbert space, let $\mathcal{U} \subset \mathcal{H}$. A function $\prox_{\varphi} \colon \mathcal{U} \rightarrow \mathcal{H}$ is a proximity operator of a function $\varphi \colon \mathcal{H} \rightarrow \bar \R$ if, and only if, there exists a convex lower semi-continuous function $\psi \colon \mathcal{H} \rightarrow \bar \R$ such that for each $u \in \mathcal{U}, \prox_{\varphi}(u) \in \partial \psi(u)$.
\end{theorem}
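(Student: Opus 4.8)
The plan is to prove both implications by Fenchel conjugation, exactly as in Moreau's classical argument for convex penalties (cf.\ \cite{gribonval2020characterization}), the new point being that the quadratic term convexifies the problem ``from the dual side''. Write $f := \prox_{\varphi}$ for the map under consideration and set $g := \varphi + \tfrac12\|\cdot\|^2$, so that by completing the square
\[
  \varphi(x) + \tfrac12\|x - u\|^2 \;=\; g(x) - \langle x, u\rangle + \tfrac12\|u\|^2 \qquad \forall\, x, u \in \mathcal{H}.
\]
Consequently $\prox_{\varphi}(u) = \arg\min_{x}\bigl(g(x) - \langle x, u\rangle\bigr)$, and the relevant convex object throughout is the conjugate $g^{*} = (\varphi + \tfrac12\|\cdot\|^2)^{*}$, which is convex, lower semi-continuous, and (in all cases of interest) proper.

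For the direct implication, assume $f = \prox_{\varphi}$ is well defined on $\mathcal{U}$ and fix $u \in \mathcal{U}$, $p := f(u)$. Since $p$ minimizes $x \mapsto g(x) - \langle x, u\rangle$, that infimum is finite and attained, hence $g^{*}(u) = \langle p, u\rangle - g(p)$. A one-line Fenchel--Young estimate then gives, for every $w \in \mathcal{H}$, $g^{*}(w) \ge \langle p, w\rangle - g(p) = g^{*}(u) + \langle p, w - u\rangle$, i.e.\ $p \in \partial g^{*}(u)$. Thus $\psi := g^{*} = (\varphi + \tfrac12\|\cdot\|^2)^{*}$ is convex lsc and satisfies $\prox_{\varphi}(u) \in \partial\psi(u)$ for all $u \in \mathcal{U}$.

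For the converse, given a proper convex lsc $\psi$ with $f(u) \in \partial\psi(u)$ for all $u \in \mathcal{U}$, define $\varphi := \psi^{*} - \tfrac12\|\cdot\|^2$ (a proper function, since $\psi^{*}$ is). Reversing the completion of squares, $\prox_{\varphi}(u) = \arg\min_{x}\bigl(\psi^{*}(x) - \langle x, u\rangle\bigr)$, and because $\psi^{*}$ is proper convex lsc this $\arg\min$ equals $\{x : u \in \partial\psi^{*}(x)\} = (\partial\psi^{*})^{-1}(u) = \partial\psi^{**}(u) = \partial\psi(u)$, using $\psi^{**} = \psi$. Since $f(u) \in \partial\psi(u)$, we conclude $f(u) \in \prox_{\varphi}(u)$, so $f$ is a proximity operator of $\varphi$, which closes the equivalence.

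The routine difficulty is purely in the bookkeeping: one must track properness so that the subdifferential sum rule and the identity $\psi^{**} = \psi$ apply, and one must be precise about what ``$f$ is a proximity operator of $\varphi$'' means when $\varphi$ is nonconvex, namely $f(u) \in \arg\min_{x}\bigl(\varphi(x) + \tfrac12\|x - u\|^2\bigr)$ with the (possibly set-valued) minimizer set. No monotonicity or continuity of $f$ is needed; the conceptual content is simply that $\varphi + \tfrac12\|\cdot\|^2$ always admits a convex conjugate and that $\prox_{\varphi}$ is a selection of its subdifferential --- the exact analogue of Moreau's identity $\prox_{\varphi} = \nabla\psi$ in the convex, single-valued case. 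I expect the main (minor) obstacle in a fully rigorous write-up to be justifying that the infimum defining $\prox_{\varphi}(u)$ is attained, which here is implicit in the hypothesis that $\prox_{\varphi}$ is a well-defined map on $\mathcal{U}$.
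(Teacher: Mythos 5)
Your argument is correct, and it is essentially the canonical one: the paper does not prove this statement itself (it is imported verbatim from Gribonval--Nikolova, cited as \cite[Theorem 1]{gribonval2020characterization}), and your completion-of-squares/Fenchel-conjugation argument --- taking $\psi = \left(\varphi + \tfrac12\|\cdot\|^2\right)^{*}$ for necessity and $\varphi = \psi^{*} - \tfrac12\|\cdot\|^2$ for sufficiency, with the inversion $\left(\partial\psi^{*}\right)^{-1} = \partial\psi^{**} = \partial\psi$ --- is exactly the route taken in that reference, extending Moreau's classical identity. The properness/attainment caveats you flag are indeed the only bookkeeping needed, so nothing is missing.
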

By integrating this characterization into \texttt{PEPit}, we move from working with purely nonconvex functions or proximal operators, which often lack useful analytical properties, to working with convex potentials. Specifically, instead of expressing the updates as $\bar x_{k+1} \in \prox_{\gamma_x f}(\hat x_{k+1})$ and $\bar y_{k+1} \in \prox_{\gamma_y g}(\hat y_{k+1})$, we now write them as $\bar x_{k+1} \in \partial \psi_f(\hat x_{k+1})$ and $\bar y_{k+1} \in \partial \psi_g(\hat y_{k+1})$ for some convex l.s.c functions $\psi_f$ and $\psi_g$. This allows \texttt{PEPit} to search for the worst-case convex functions $\psi_f$ and $\psi_g$ such that a Lyapunov function $\mathcal{V}$ exists. If such a Lyapunov function is identified, it will be valid for any convex potentials $\psi_f$ and $\psi_g$. Consequently, the identified Lyapunov function is also valid for the proximal operator of any functions $\varphi_f$ and $\varphi_g$.

Next, we highlight the main change in the \texttt{PEPit} code when relaxing the convexity assumptions on $f$ and $g$, and incorporating the characterization given in Theorem \ref{thm:nonconvex_prox}.
\begin{enumerate}
\item We define convex potentials $\psi_f$ and $\psi_g$. This piece of code remains almost identical to the previous version, except that  $\psi_f$ and $\psi_g$ are now used to implicitly represent nonconvex functions.
\begin{lstlisting}[style=fancy2]
psi_f, M, A, AT = [[]] * m, [[]] * m, [[]] * m, [[]] * m

for i in range(m):
    psi_f[i] = problem.declare_function(ConvexFunction)
    M[i] = problem.declare_function(LinearOperator, L = 1)
    A[i] = M[i].gradient
    AT[i] = M[i].T.gradient
g2 = problem.declare_function(SmoothFunction, L = 1)
psi_g = problem.declare_function(ConvexFunction)
\end{lstlisting}
    \item The primal and dual backward updates write as follows now: 
\begin{lstlisting}[style=fancy2]
# Primal-Dual forward step 
y_hat = ...     # Same as before
x_hat = ...     # Same as before

# Primal-Dual backward step
y_bar = psi_g.gradient(y_hat) 
x_bar = psi_f.gradient(x_hat)
\end{lstlisting}
    and the remainder of the code remains unchanged.
\end{enumerate}
At this point, we had our algorithm fully implemented within \texttt{PEPit}, and we successfully identified a Lyapunov function that guarantees convergence—i.e., one for which the inequality \eqref{eqn:Lyapunov_inequality} holds. This marked the first major step in transitioning from a mere \textit{conjecture} to a \textit{theorem} supported by an analytical proof.

Leveraging \texttt{PEPit}’s structure for defining problems and variables, we extracted corresponding numerical values. These values guided us in formulating a candidate Lyapunov function and identifying a set of key inequalities to consider—such as the weak (MVI) at a specific point, the smoothness of $g_2$, among others.

This process ultimately enabled us to rigorously prove our conjecture, now formalized in Theorem \ref{thm:NC-SPDHG}. The complete proof is provided in the following section.
\section{Convergence proof of NC-SPDHG} \label{sec:SPDHG:convergence}
\begin{lemma} \label{lem:NC-SPDHG_expec}
For $\bar x_{k+1}$ and $x_{k+1}$ generated by Algorithm \ref{alg:NC-SPDHG}, and for $\bar G_{k+1}$ defined in \eqref{eqn:G_k+1}, let $\theta = m$ and define $H_{k+1} := \bar G_{k+1} + A(\bar x_{k+1} - x_k) - \frac{\theta}{\alpha} A(x_{k+1} - x_k)$. Then, the following holds:  
    \begin{align}
        \E_k\left[\theta A(x_{k+1} - x_k)\right] &= \alpha A(\bar x_{k+1} - x_k)  \label{eqn:E_RE_PDHG}\\
        \E_k[H_{k+1}] &= \bar G_{k+1} \label{eqn:E_H_k+1}\\
        \E_k\left[\left\langle H_{k+1}, y_k - y_* \right \rangle_{\Y} \right] &= \left \langle \bar G_{k+1}, y_k - y_* \right \rangle_{\Y} \label{eqn:H_k+1_inner} \\ 
        \E_k \left[ \left\| H_{k+1} \right\|^2_{\Y} \right] &= \left\|\bar G_{k+1}\right\|^2_{\Y} + \frac{\theta^2}{\alpha^2} \E_k\left[\left\|A( x_{k+1} - x_k)\right\|^2_{\Y} \right] - \left\|A(\bar x_{k+1} - x_k)\right\|^2_{\Y} \label{eqn:H_k+1_norm} \\
        \frac{\theta^2}{\alpha^2} \E_k\left[\|A(x_{k+1} - x_k)\|^2_{\Y}\right] &\leq  m \sup_{1 \leq \ell \leq m} {\|A_\ell\|^2 \|\bar x_{k+1} - x_k\|^2_{\X}} \label{eqn:E_x_tilde_norm}  
    \end{align}
\end{lemma}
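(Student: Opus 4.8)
The plan is to lean on two structural facts throughout. First, $\bar x_{k+1}$, $\bar y_{k+1}$, $x_k$, $y_k$ — and hence $\bar G_{k+1}$ and $y_k - y_*$ — are all $\mathscr{G}_k$-measurable, since they are computed \emph{before} the block index $i_{k+1}$ is drawn, so they behave as constants under $\E_k$. Second, by the block update rule in Algorithm~\ref{alg:NC-SPDHG}, the increment $x_{k+1} - x_k$ is supported on the single block $\X_{i_{k+1}}$, on which it equals $\alpha$ times the corresponding block of $\bar x_{k+1} - x_k$. Writing $A = [A_1,\dots,A_m]$ so that $Av = \sum_{i=1}^m A_i v_i$, this means $A(x_{k+1}-x_k) = \alpha\, A_{i_{k+1}}(\bar x_{k+1}-x_k)_{i_{k+1}}$ on the event $\{i_{k+1}=i\}$.

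First I would prove \eqref{eqn:E_RE_PDHG} by direct averaging over the uniform index:
\[
\E_k[\theta A(x_{k+1}-x_k)] = \frac{\theta}{m}\sum_{i=1}^m \alpha\, A_i(\bar x_{k+1}-x_k)_i = \frac{\theta\alpha}{m}\, A(\bar x_{k+1}-x_k),
\]
which equals $\alpha A(\bar x_{k+1}-x_k)$ precisely because $\theta = m$. Rearranged, this says $\frac{\theta}{\alpha}\E_k[A(x_{k+1}-x_k)] = A(\bar x_{k+1}-x_k)$, so the random vector $\xi_{k+1} := A(\bar x_{k+1}-x_k) - \frac{\theta}{\alpha}A(x_{k+1}-x_k)$ satisfies $\E_k[\xi_{k+1}] = 0$, and $H_{k+1} = \bar G_{k+1} + \xi_{k+1}$ with $\bar G_{k+1}$ constant given $\mathscr{G}_k$. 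Then \eqref{eqn:E_H_k+1} is immediate, and \eqref{eqn:H_k+1_inner} follows by pulling the $\mathscr{G}_k$-measurable factor $y_k - y_*$ out of the expectation and applying \eqref{eqn:E_H_k+1}.

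For \eqref{eqn:H_k+1_norm} I would use the bias–variance decomposition available from $H_{k+1} = \bar G_{k+1} + \xi_{k+1}$: since $\E_k[\xi_{k+1}] = 0$ and $\bar G_{k+1}$ is deterministic under $\E_k$,
\[
\E_k\|H_{k+1}\|^2_{\Y} = \|\bar G_{k+1}\|^2_{\Y} + \E_k\|\xi_{k+1}\|^2_{\Y},
\]
and expanding $\|\xi_{k+1}\|^2_{\Y}$ while using once more $\E_k[\tfrac{\theta}{\alpha}A(x_{k+1}-x_k)] = A(\bar x_{k+1}-x_k)$ gives $\E_k\|\xi_{k+1}\|^2_{\Y} = \frac{\theta^2}{\alpha^2}\E_k\|A(x_{k+1}-x_k)\|^2_{\Y} - \|A(\bar x_{k+1}-x_k)\|^2_{\Y}$, which is exactly \eqref{eqn:H_k+1_norm}. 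Finally, for \eqref{eqn:E_x_tilde_norm}, averaging over the uniform index once more and using the block bound $\|A_i v_i\|_{\Y} \le \|A_i\|\,\|v_i\|_{\X_i}$,
\[
\E_k\|A(x_{k+1}-x_k)\|^2_{\Y} = \frac{\alpha^2}{m}\sum_{i=1}^m \|A_i(\bar x_{k+1}-x_k)_i\|^2_{\Y} \le \frac{\alpha^2}{m}\sup_{1\le\ell\le m}\|A_\ell\|^2 \sum_{i=1}^m \|(\bar x_{k+1}-x_k)_i\|^2_{\X_i} = \frac{\alpha^2}{m}\sup_{\ell}\|A_\ell\|^2\,\|\bar x_{k+1}-x_k\|^2_{\X},
\]
and multiplying through by $\theta^2/\alpha^2 = m^2/\alpha^2$ yields the claim.

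None of these steps is genuinely difficult: the only points that require care are the measurability bookkeeping — treating every quantity computed before the draw as a constant under $\E_k$ — and the block accounting, namely correctly restricting $x_{k+1}-x_k$ to the active block and keeping the $\X_i$-norms aligned with the blocks $A_i$ of $A$. The substantive choice is $\theta = m$, which is exactly what makes the $1/m$ from uniform sampling cancel and delivers the unbiasedness $\E_k[\xi_{k+1}] = 0$ on which the rest of the lemma rests.
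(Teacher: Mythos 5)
Your proof is correct and follows essentially the same route as the paper: unbiasedness of $\theta A(x_{k+1}-x_k)$ via uniform averaging over blocks with $\theta=m$, measurability of $\bar G_{k+1}$ under $\mathscr{G}_k$ for \eqref{eqn:E_H_k+1}--\eqref{eqn:H_k+1_inner}, a variance-type decomposition for \eqref{eqn:H_k+1_norm}, and the block-wise operator-norm bound for \eqref{eqn:E_x_tilde_norm}. The only cosmetic difference is that the paper invokes the identity $\E_k\|\xi\|^2=\E_k\|\xi-\E_k\xi\|^2+\|\E_k\xi\|^2$ twice where you expand the square of the zero-mean term $\xi_{k+1}$ directly, which is equivalent.
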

\begin{proof}
    \begin{enumerate}
        \item Recall that $x_{k+1}^j = x_k^j, ~ \forall j \neq b_{i_{k+1}}$, then: 
    \begin{align*}
        \E_k\left[\theta A(x_{k+1} - x_k)\right] &= \theta \E_k\left[ \sum_{i = 1}^m A_i \left(x_{k+1}^i - x_k^i \right)\right] \\ 
        &= \theta \E_k\left[A_{i_{k+1}} \left(x_{k+1}^{i_{k+1}} - x_k^{i_{k+1}}\right)\right] \\ 
        &= \theta \E_k\left[\alpha A_{i_{k+1}} \left(\bar x_{k+1}^{i_{k+1}} - x_k^{i_{k+1}}\right)\right] \\ 
        &= \frac{\alpha \theta}{m} \sum_{\ell = 1}^m A_\ell \left(\bar x_{k+1}^{\ell} - x_k^\ell \right) \\
        &= \alpha A \left(\bar x_{k+1} - x_k \right) 
    \end{align*}
        \item By the fact that $\bar G_{k+1} \in \mathcal{G}_k$, we have: 
        \begin{equation*}
            \E_k[H_{k+1}] = \bar G_{k+1} + A(\bar x_{k+1} - x_k) -\frac{\theta}{\alpha} \E_k \left[A(x_{k+1} - x_k)\right] \stackrel{\eqref{eqn:E_RE_PDHG}}{=} \bar G_{k+1}
        \end{equation*}
        \item Eq. \eqref{eqn:H_k+1_inner} follows directly from \eqref{eqn:E_H_k+1}.
        \item We use the identity $\E_k\left[\|\xi\|^2\right] = \E_k\left[\left\| \xi - \E_{k}[\xi]\right\|^2\right] + \left\|\E_k[\xi]\right\|^2$ with $\xi = H_{k+1}$.
        \begin{align*}
            \E_k\left[\|H_{k+1}\|^2_{\Y}\right] 
            &\stackrel{\eqref{eqn:E_H_k+1}}{=} \E_k\left[\left\|H_{k+1} - \bar G_{k+1}\right\|^2_{\Y}\right] + \left\|\bar G_{k+1}\right\|^2_{\Y} \\ 
            &= \E_k\left[\left\|A(\bar x_{k+1} - x_k) - \frac{\theta}{\alpha} A(x_{k+1} - x_k)\right\|^2_{\Y}\right] + \left\|\bar G_{k+1}\right\|^2_{\Y}
        \end{align*}
        We reuse the same identity with $\xi = \frac{\theta}{\alpha} A(x_{k+1} - x_k)$ to get: 
        \begin{align*}
             \E_k\left[\left\|A(\bar x_{k+1} - x_k) - \frac{\theta}{\alpha} A( x_{k+1} - x_k)\right\|^2_{\Y}\right] \\ 
             & \!\hspace{-2cm}\! = \frac{\theta^2}{\alpha^2} \E_k\left[\left\|A(x_{k+1} - x_k)\right\|^2_{\Y}\right] - \left\|A(\bar x_{k+1} - x_k)\right\|^2_{\Y}
        \end{align*}
        \item 
        \begin{align*}
            \frac{\theta^2}{\alpha^2} \E_k\left[\|A(x_{k+1} - x_k)\|^2_{\Y}\right] &= \frac{m^2}{\alpha^2} \E_k\left[\left\|A_{i_{k+1}} \left(x_{k+1}^{i_{k+1}} - x_k^{i_{k+1}}\right)\right\|^2_{\Y}\right] \\
            &= \frac{m^2}{\alpha^2} \E_k\left[\left\|A_{i_{k+1}} \left( (1 - \alpha) x_k^{i_{k+1}} + \alpha \bar x_{k+1}^{i_{k+1}} - x_k^{i_{k+1}} \right)\right\|^2_{\Y} \right] \\
            &= m^2 \E_k\left[\left\|A_{i_{k+1}} \left(\bar x_{k+1}^{i_{k+1}} - x_k^{i_{k+1}}\right)\right\|^2_{\Y} \right] \\
            &= m \sum_{\ell = 1}^m \left\|A_\ell \left(\bar x_{k+1}^\ell - x_k^\ell\right)\right\|^2_{\Y} \\
            & \leq m \sup_{1 \leq \ell\leq m} \|A_\ell\|^2 \sum_{\ell = 1}^m \|\bar x_{k+1}^\ell - x_k^\ell\|^2_{\X} \\
            &= m \sup_{1 \leq \ell \leq m} \|A_\ell\|^2 \|\bar x_{k+1} - x_k \|^2_{\X}
        \end{align*}
        \qed
    \end{enumerate} 
\end{proof}

\begin{itemize}[label={$\blacktriangleright$}]
    \item \textbf{Proof of Theorem} \ref{thm:NC-SPDHG}
\end{itemize}
\begin{proof}
We consider the following Lyapunov function 
\begin{equation*}
    \V(z_{k+1}, z_*) = \E_k\left[\frac{m}{2\gamma_x} \|x_{k+1} - x_*\|^2_{\X} + \frac{1}{2\gamma_y} \|y_{k+1} - y_*\|^2_{\Y} \right]
\end{equation*}
\begin{enumerate}
    \item We start by applying Assumption \ref{assump:weak-MVI} on our problem. Since Algorithm \ref{alg:NC-SPDHG} updates $(\bar x_{k+1}, \bar y_{k+1})$ in the same way as Algorithm \ref{alg:NC-PDHG}, then we can use Lemma \ref{lemma:D_k+1}, since 
\begin{align*}
    \bar F_{k+1} = \frac{1}{\gamma_x} (x_k - \bar x_{k+1}) &\in \partial_c f(\bar x_{k+1}) + A^\top \bar y_{k+1} \\
    \bar G_{k+1} &\in \partial_c g(\bar y_{k+1}) + \nabla g_2(\bar y_{k+1}) - A\bar x_{k+1}
\end{align*}
Thus, for some $\rho \neq 0$, we have:
\begin{equation}
    \left\langle \bar x_{k+1} - x_*, \bar F_{k+1} \right\rangle_{\X} + \left\langle \bar y_{k+1} - y_*, \bar G_{k+1} \right\rangle_{\Y} \geq \rho \left(\|\bar F_{k+1}\|^2_{\X} + \|\bar G_{k+1}\|^2_{\Y} \right) 
\end{equation}
\item We study the primal quantity, $\E_k\left[\|x_{k+1} - x_*\|^2_{\X} \right]$. Recall that $x_{k+1}^j = x_k^j$ if $j \neq i_{k+1}$. Hence,
\begin{align*}
\frac{1}{2\gamma_x} & \E_k \left[\|x_{k+1} - x_*\|^2_{\X} - \|x_k - x_*\|^2_{\X} \right] \\
&= \frac{1}{2\gamma_x} \E_k\left[\sum_{i = 1}^m \left\|x_{k+1}^i - x_*^i\right\|^2_{\X_i} - \sum_{i = 1}^m \left\|x_k^i - x_*^i\right\|^2_{\X_i} \right] \\
&= \frac{1}{2\gamma_x} \E_k\left[\left\|x_{k+1}^{i_{k+1}} - x_*^{i_{k+1}}\right\|^2_{\X_{i_{k+1}}} + \sum_{i \neq i_{k+1}}^m \left\|x_{k+1}^i - x_*^i\right\|^2_{\X_i} - \sum_{i = 1}^m \left\|x_k^i - x_*^i\right\|^2_{\X_i} \right] \\
&= \frac{1}{2\gamma_x} \E_k\left[\left\|(1 - \alpha)x_k^{i_{k+1}} + \alpha \bar{x}_{k+1}^{i_{k+1}} - x_*^{i_{k+1}}\right\|^2_{\X_{i_{k+1}}}  - \left\|x_k^{i_{k+1}} - x_*^{i_{k+1}}\right\|^2_{\X_{i_{k+1}}} \right] \\
&= \frac{1}{2m \gamma_x} \sum_{\ell = 1}^m \left\|(1-\alpha)x_k^\ell + \alpha \bar x_{k+1}^\ell - x_*^\ell \right\|^2_{\X_\ell} - \left\|x_k^\ell - x_*^\ell\right\|^2_{\X_\ell} \\
&=\frac{1}{2 m \gamma_x} \|\alpha(\bar x_{k+1}-x_k) + x_k- x_*\|^2_{\X} - \frac{1}{2 m \gamma_x} \|x_k - x_*\|^2_{\X} \\
& = \frac{1}{2 m \gamma_x} \left(\|x_k - x_*\|^2_{\X} + 2\alpha \langle \bar x_{k+1} - x_k, x_k - x_*\rangle_{\X} + \alpha^2 \|\bar x_{k+1} - x_k\|^2_{\X} - \|x_k - x_*\|^2_{\X} \right) \\
& = \frac{\alpha }{m\gamma_x}\langle \bar x_{k+1} - x_k, x_k - \bar{x}_{k+1} + \bar{x}_{k+1} - x_*\rangle_{\X} +\frac{\alpha^2}{2m\gamma_x}\|\bar x_{k+1} - x_k\|^2_{\X} \\
& = -\frac{\alpha }{m} \left\langle \frac{1}{\gamma_x}(x_k - \bar x_{k+1}), \bar x_{k+1} - x_* \right\rangle_{\X} + \frac{\alpha^2 - 2\alpha}{2m\gamma_x}\|\bar x_{k+1} - x_k\|^2_{\X} \\
& = -\frac{\alpha }{m} \left\langle \bar F_{k+1}, \bar x_{k+1} - x_* \right\rangle_{\X} + \frac{\gamma_x(\alpha^2 - 2\alpha)}{2m}\|\bar F_{k+1}\|^2_{\X}
\end{align*}
\item We study the dual quantity, $\E_k\left[\|y_{k+1} - y_*\|^2_{\Y}\right]$, where the random extrapolation is applied. 
\begin{align*}
& \frac{1}{2\gamma_y} \E_k\left[\|y_{k+1} - y_*\|^2_{\Y} - \|y_k - y_*\|^2_{\Y}\right]\\
& = \frac{1}{2\gamma_y} \E_k\Big[\left\|(1 - \alpha) y_k + \alpha \bar y_{k+1} + \gamma_y \theta A(x_{k+1} - x_k) + \alpha \gamma_y \left(\nabla g_2(y_k) - \nabla g_2(\bar y_{k+1})\right) - y_* \right\|^2_{\Y} \\ 
& \qquad \hspace{9cm} - \|y_k - y_*\|^2_{\Y} \Big] \\
& = \frac{1}{2\gamma_y} \E_k\Bigg[\left\|(y_k - y_*) - \alpha \gamma_y \left(\frac{1}{\gamma_y} (y_k - \bar y_{k+1}) - \frac{\theta}{\alpha} A(x_{k+1} - x_k) - \left(\nabla g_2(y_k) - \nabla g_2(\bar y_{k+1}) \right) \right)\right\|^2_{\Y} \\ 
& \qquad \hspace{9cm} - \|y_k - y_*\|^2_{\Y}\Bigg] \\
& = \frac{1}{2\gamma_y} \E_k \left[\left\|y_k - y_* - \alpha\gamma_y \left(\bar G_{k+1} + A(\bar x_{k+1} - x_k) - \frac{\theta}{\alpha} A(x_{k+1} - x_k)\right)\right\|^2_{\Y} - \|y_k - y_*\|^2_{\Y}\right] \\
& = - \alpha \E_k \left[\left\langle \bar G_{k+1} + A(\bar x_{k+1} - x_k) - \frac{\theta}{\alpha} A( x_{k+1} - x_k), y_k - y_*\right\rangle_{\Y}\right] \\ 
& \qquad \hspace{3cm} + \frac{\alpha^2 \gamma_y}{2}\E_k\left[\left\|\bar G_{k+1}  + A(\bar x_{k+1} - x_k) - \frac{\theta}{\alpha} A(x_{k+1} - x_k)\right\|^2_{\Y}\right] \\
& \stackrel{(\ref{eqn:H_k+1_inner}, \ref{eqn:H_k+1_norm})}{=} -\alpha \left\langle \bar G_{k+1}, y_k - y_* \right \rangle_{\Y} + \frac{\alpha^2 \gamma_y}{2} \left( \left\|\bar G_{k+1}\right\|^2_{\Y} +  \frac{\theta^2}{\alpha^2} \E_k\left[\left\|A(x_{k+1} - x_k)\right\|^2_{\Y}\right] - \left\|A(\bar x_{k+1} - x_k) \right\|^2_{\Y}\right) \\
& \stackrel{ \eqref{eqn:E_x_tilde_norm}}{\leq} -\alpha \left\langle \bar G_{k+1}, y_k - y_* \right \rangle_{\Y} + \frac{\alpha^2 \gamma_y}{2} \left( \left\|\bar G_{k+1}\right\|^2_{\Y} +  m \sup_\ell \|A_\ell\|^2 \|\bar x_{k+1} - x_k\|^2_{\X} - \left\|A(\bar x_{k+1} - x_k) \right\|^2_{\Y}\right) \\
& =  -\alpha \left\langle \bar G_{k+1}, y_k - y_* \right \rangle_{\Y} + \frac{\alpha^2 \gamma_y}{2} \left( \left\|\bar G_{k+1}\right\|^2_{\Y} +  m \gamma_x^2 \sup_\ell \|A_\ell\|^2 \|\bar F_{k+1}\|^2_{\X} - \gamma_x^2 \left\|A \bar F_{k+1} \right\|^2_{\Y}\right) \\
& = -\alpha \left\langle \bar G_{k+1}, y_k - \bar y_{k+1} \right \rangle_{\Y} - \alpha \left\langle \bar G_{k+1}, \bar y_{k+1} - y_* \right \rangle_{\Y}  \\
&\qquad \qquad + \frac{\alpha^2 \gamma_y}{2} \left( \left\|\bar G_{k+1}\right\|^2_{\Y} +  m \gamma_x^2 \sup_\ell \|A_\ell\|^2 \|\bar F_{k+1}\|^2_{\X} - \gamma_x^2 \left\|A \bar F_{k+1} \right\|^2_{\Y}\right) 
\end{align*}
Now,
\begin{align*}
- \alpha & \left\langle  \bar G_{k+1}, y_k - \bar y_{k+1} \right\rangle_{\Y} \\
&= - \frac{\alpha \gamma_y}{2} \|\bar G_{k+1}\|^2_{\Y} - \frac{\alpha}{2\gamma_y} \|\bar y_{k+1} - y_k\|^2_{\Y} + \frac{\alpha \gamma_y}{2}\left\|\bar G_{k+1} - \frac{1}{\gamma_y} (y_k - \bar y_{k+1})\right\|^2_{\Y}  \\
& = - \frac{\alpha \gamma_y}{2} \|\bar G_{k+1}\|^2_{\Y} - \frac{\alpha}{2\gamma_y} \|\bar y_{k+1} - y_k\|^2_{\Y} + \frac{\alpha \gamma_y}{2} \left\|A(\bar x_{k+1} - x_k) + \nabla g_2(y_k) - \nabla g_2(\bar y_{k+1})\right\|^2_{\Y} \\
& \leq  - \frac{\alpha \gamma_y}{2} \|\bar G_{k+1}\|^2_{\Y} - \frac{\alpha}{2\gamma_y} \|\bar y_{k+1} - y_k\|^2_{\Y} + \alpha \gamma_y \left\|A (\bar x_{k+1} - x_k)\right\|^2_{\Y} \\ 
& \qquad \qquad + \alpha\gamma_y \|\nabla g_2(y_k) - \nabla g_2(\bar y_{k+1})\|^2_{\Y} \\
& \leq - \frac{\alpha \gamma_y}{2} \|\bar G_{k+1}\|^2_{\Y} + \left(\alpha\gamma_y L{_\nabla g_2}^2 - \frac{\alpha}{2 \gamma_y}\right) \|\bar y_{k+1} - y_k\|^2_{\Y} + \alpha\gamma_y \|A(\bar x_{k+1} - x_k)\|^2_{\Y} \\
& = - \frac{\alpha \gamma_y}{2} \|\bar G_{k+1}\|^2_{\Y} + \left(\alpha\gamma_y L{_\nabla g_2}^2 - \frac{\alpha}{2 \gamma_y}\right) \|\bar y_{k+1} - y_k\|^2_{\Y} + \alpha\gamma_y\gamma_x^2 \|A \bar F_{k+1}\|^2_{\Y} 
\end{align*}
Hence, 
\begin{align*}
& \frac{1}{2\gamma_y} \E_k\left[\|y_{k+1} - y_*\|^2_{\Y} - \|y_k - y_*\|^2_{\Y}\right]\\
&= - \alpha \left\langle \bar G_{k+1}, \bar y_{k+1} - y_* \right \rangle_{\Y}  - \frac{\alpha \gamma_y}{2} \|\bar G_{k+1}\|^2_{\Y} + \left(\alpha\gamma_y L{_\nabla g_2}^2 - \frac{\alpha}{2 \gamma_y}\right) \|\bar y_{k+1} - y_k\|^2_{\Y} \\ 
&\qquad \qquad + \alpha\gamma_y\gamma_x^2 \|A \bar F_{k+1}\|^2_{\Y} + \frac{\alpha^2 \gamma_y}{2} \left( \left\|\bar G_{k+1}\right\|^2_{\Y} +  m\gamma_x^2 \sup_\ell \|A_\ell\|^2 \|\bar F_{k+1} \|^2_{\X} - \gamma_x^2\left\|A \bar F_{k+1}\right\|^2_{\Y} \right) \\
&= - \alpha \left\langle \bar G_{k+1}, \bar y_{k+1} - y_* \right \rangle_{\Y}  + \frac{\alpha \gamma_y}{2} (\alpha -1 ) \|\bar G_{k+1}\|^2_{\Y} + \left(\alpha\gamma_y L{_\nabla g_2}^2 - \frac{\alpha}{2 \gamma_y}\right) \|\bar y_{k+1} - y_k\|^2_{\Y} \\ 
&\qquad \qquad + \alpha\gamma_y\gamma_x^2 \left(1 - \frac{\alpha}{2}\right)\|A \bar F_{k+1}\|^2_{\Y} + \frac{m \alpha^2 \gamma_x^2 \gamma_y}{2} \sup_\ell \|A_\ell\|^2 \|\bar F_{k+1}\|^2_{\X}
\end{align*}
\item We sum the inequality on primal and dual space and weigh them in order to apply the weak MVI inequality.
\begin{align*}
\frac{m}{2\gamma_x} & \E_k \left[\|x_{k+1} - x_*\|^2_{\X} - \|x_k - x_*\|^2_{\X} \right] + \frac{1}{2\gamma_y} \E_k\left[\|y_{k+1} - y_*\|^2_{\Y} - \|y_k - y_*\|^2_{\Y} \right] \\
&= -\alpha \left\langle \bar F_{k+1}, \bar x_{k+1} - x_* \right\rangle_{\X} 
+ \frac{\gamma_x(\alpha^2 - 2\alpha)}{2}\|\bar F_{k+1}\|^2_{\X}  - \alpha  \left\langle \bar G_{k+1}, \bar y_{k+1} - y_* \right \rangle_{\Y} \\
&\qquad + \frac{\alpha \gamma_y}{2} (\alpha - 1 ) \|\bar G_{k+1}\|^2_{\Y} + \left(\alpha\gamma_y L^2_{\nabla g_2} - \frac{\alpha}{2 \gamma_y}\right) \|\bar y_{k+1} - y_k\|^2_{\Y} \\
&\qquad + \alpha\gamma_y\gamma_x^2 \left(1 - \frac{\alpha}{2}\right)\|A \bar F_{k+1}\|^2_{\Y} + \frac{m \alpha^2 \gamma_x^2 \gamma_y}{2} \sup_\ell \|A_\ell\|^2 \|\bar F_{k+1}\|^2_{\X} \\
&= -\alpha \left(\left\langle \bar F_{k+1}, \bar x_{k+1} - x_* \right\rangle_{\X} + \left\langle \bar G_{k+1}, \bar y_{k+1} - y_* \right \rangle_{\Y}  \right) + \frac{\alpha \gamma_y}{2} (\alpha - 1 ) \|\bar G_{k+1}\|^2_{\Y} \\
&\qquad  + \alpha \gamma_x \left(\frac{\alpha}{2} - 1 + \frac{m \alpha \gamma_x \gamma_y}{2} \sup_\ell \|A_\ell\|^2 \right) \|\bar F_{k+1}\|^2_{\X}  \\
&\qquad + \left(\alpha\gamma_y L^2_{\nabla g_2} - \frac{\alpha}{2 \gamma_y}\right) \|\bar y_{k+1} - y_k\|^2_{\Y} + \alpha\gamma_y\gamma_x^2 \left(1 - \frac{\alpha}{2}\right)\|A \bar F_{k+1}\|^2_{\Y} \\
&\leq -\alpha\rho\left(\left\|\bar F_{k+1}\right\|^2_{\X} + \left\|\bar G_{k+1}\right\|^2_{\Y} \right) + \frac{\alpha \gamma_y}{2} (\alpha - 1 ) \|\bar G_{k+1}\|^2_{\Y} \\
&\qquad  + \alpha \gamma_x \left(\frac{\alpha}{2} - 1 + \frac{m \alpha \gamma_x \gamma_y}{2} \sup_\ell \|A_\ell\|^2 + \gamma_x \gamma_y \left(1 - \frac{\alpha}{2}\right)\|A\|^2\right) \|\bar F_{k+1}\|^2_{\X}  \\
&\qquad + \left(\alpha\gamma_y L^2_{\nabla g_2} - \frac{\alpha}{2 \gamma_y}\right) \|\bar y_{k+1} - y_k\|^2_{\Y}  \\
&= - \alpha \left(\rho + \gamma_x \left(1 - \frac{\alpha}{2} \right) - \gamma_x^2 \gamma_y \left(\frac{m \alpha}{2} \sup_\ell \|A_\ell\|^2 + \left(1 - \frac{\alpha}{2}\right)\|A\|^2\right)\right) \|\bar F_{k+1}\|^2_{\X}  \\
&\qquad - \alpha \left(\rho + \frac{\gamma_y}{2} (1 - \alpha)\right) \|\bar G_{k+1}\|^2_{\Y} + \left(\alpha\gamma_y L^2_{\nabla g_2} - \frac{\alpha}{2 \gamma_y}\right) \|\bar y_{k+1} - y_k\|^2_{\Y}  \\
&= - \alpha \left(C_x \|\bar F_{k+1}\|^2_{\X} + C_y \|\bar G_{k+1}\|^2_{\Y} \right) + \left(\alpha\gamma_y L^2_{\nabla g_2} - \frac{\alpha}{2 \gamma_y}\right) \|\bar y_{k+1} - y_k\|^2_{\Y}  \\
&\leq - \alpha \min(C_x, C_y) \|\bar D_{k+1}\|^2_{\Z} + \left(\alpha\gamma_y L^2_{\nabla g_2} - \frac{\alpha}{2 \gamma_y}\right) \|\bar y_{k+1} - y_k\|^2_{\Y}
\end{align*}
\end{enumerate}
Thus, taking $\alpha$ and the step sizes, $\gamma_x$ and $\gamma_y$, such that \eqref{eqn:SPDHG_param_cond} holds, we get:
\begin{align*}
    &\|\bar D_{k+1}\|_{\Z} \leq  \\
    &\frac{1}{2\alpha C} \left(\frac{m}{\gamma_x} \E_k\left[\|x_k - x_*\|^2_{\X} - \|x_{k+1} - x_*\|^2_{\X}\right] + \frac{1}{\gamma_y} \E_k\left[\|y_k - y_*\|^2_{\Y} - \|y_{k+1} - y_*\|^2_{\Y}\right]\right)
\end{align*}
Taking the full expectation, summing from $k = 0$ to $k = K-1$, and telescoping implies the result. \qed
\end{proof}
\section{Numerical Experiments} \label{sec:numerical_exp}
In this final section, we present our numerical experiments, which aim to illustrate and validate our theoretical findings by demonstrating the efficiency of our proposed algorithms. We consider the saddle point problem \eqref{eqn:SPP_SPDHG} with $g = 0$ which reduces to:
\begin{equation} \tag{N-SPP} \label{eqn:SPP_numerical}
\min_{x \in \X} \max_{y \in \Y} ~ \Lag(x, y) = \sum_{i = 1}^m f_i(x_i) + \underbrace{\langle A x, y \rangle - g_2(y)}_{\varphi(x, y)}
\end{equation}

\vspace{0.2cm}
\noindent
\textbf{Nonconvex-nonconcave experiments:}
We begin with two experiments involving nonconvex-nonconcave regression problems:
\begin{itemize}
\item Logistic regression with squared loss.
\item Perceptron regression with ReLU activation.
\end{itemize}
For both problems, we compare our proposed algorithms—Algorithm~\ref{alg:NC-PDHG} (NC-PDHG) and Algorithm~\ref{alg:NC-SPDHG} (NC-SPDHG)—with two state-of-the-art baselines:
\begin{enumerate}
\item Constrained Extra-Gradient+ (CEG+) \cite{EG+}, with constant step sizes.
\item Augmented Lagrangian Method (ALM) from \cite{ALM}.
\end{enumerate}
Performance is measured in terms of the total number of proximal evaluations required to reach a $\tau = 10^{-7}$-stationary point.

\vspace{0.2cm}
\noindent
\textbf{Convex-concave experiment.}
To demonstrate that our method also performs well in the convex-concave setting, we include a third experiment involving a convex-concave least-squares problem. We compare NC-SPDHG with SAGA \cite{saga}, a leading algorithm in smooth convex optimization.

For each experiment, we provide or calculate the necessary attributes for each algorithm, like proximals, gradients, smoothness constants, etc..

\vspace{0.2cm}
\noindent
\textbf{Parameters selection:} The parameters for the different algorithms are chosen as follows: 
\begin{enumerate}
    \item For NC-PDHG and NC-SPDHG, we select parameters according to Subsections~\ref{subsec:parameters_NC_PDHG} and~\ref{subsec:parameters_NC_SPDHG}, respectively.
    \item For CEG+ with constant step sizes, we follow the prescription in \cite[Corollary 3.2]{EG+}. Specifically:
    \begin{itemize}
        \item $\gamma_{\ceg} = \frac{1}{L_{\nabla \varphi}}$ with $L_{\nabla \varphi} = \sqrt{2} \left(L_{\nabla g_2} + \|A\|\right)$.
        \item $\delta = \rho$. 
        \item $\alpha_{\ceg} = 1 + \frac{2\delta}{\gamma_{\ceg}} - \varepsilon_{\ceg}$ for some $\varepsilon_{\ceg} > 0$.
    \end{itemize}
    \item For ALM, we employ a suitable subroutine to solve the inner optimization problem: 
    \begin{equation*}
        \min_{x \in \X} \Lambda_{\mu}(x, y_k) := \Lag(x, y_k) + \frac{\mu}{2} \|Ax\|^2 
    \end{equation*}
    The parameters for the subroutine are chosen according to its theoretical guidelines. We arbitrarily set $\mu = 0.5$.
\end{enumerate}
\textbf{Weak MVI parameter selection:} The weak MVI parameter, $\rho$, is generally hard to find or even to estimate. To address this, we test several feasible estimates of $\rho$ and observe the convergence behavior of the considered algorithms. This analysis is conducted for the \textit{logistic regression with squared loss} experiment described in Subsection \ref{subsec:logistic_regression}. Table~\ref{tab:rho_values} summarizes the tested values of $\rho$ and the corresponding convergence status of each algorithm. Based on these results, we conclude that $\rho = -2 \times 10^{-3}$ is the largest value for which convergence is observed across all algorithms while being feasible, i.e. $\rho > \frac{1}{2\sqrt{2} }\max \left\{\frac{-1}{\|A\|}, \frac{-1}{L_{\nabla g_2}} \right\}$ .

\begin{table}[H]
    \centering
    \begin{tabular}{||c|c|c|c|c|c|c|c||}
    \hline
        \textbf{Estimate of $\rho$} & $0$ & $-10^{-5}$ & $-10^{-4}$ & $-10^{-3}$ & $-2 \times 10^{-3}$ &  $-5 \times 10^{-3}$ & $-9 \times 10^{-3}$ \\
        \hline
        \hline
         \textbf{NC-PDHG} & \cmark & \cmark & \cmark & \cmark &\cmark & \cmark & \cmark (s) \\
         \hline
         \textbf{NC-SPDHG} & \cmark & \cmark & \cmark & \cmark &\cmark & \cmark & \cmark (s) \\
         \hline
         \textbf{CEG+}  & \xmark & \xmark & \xmark & \xmark & \cmark & \cmark & \cmark (s) \\
         \hline
         \textbf{ALM}  & \cmark & \cmark & \cmark & \cmark &\cmark & \cmark & \cmark \\
         \hline
    \end{tabular}
    \caption{Testing the convergence of the considered algorithms for several estimates of $\rho$. \cmark marks convergence, \xmark ~ marks divergence, and (s) marks slow convergence}
    \label{tab:rho_values}
\end{table}

\vspace{0.2cm}
\noindent
\textbf{Stopping criterion:} Given a tolerance $\tau > 0$, we stop the algorithm when identifying a $\tau$-stationary point, as presented in Section \ref{sec:assumptions}.

\vspace{0.2cm}
\noindent
\textbf{Dataset:} All experiments are conducted on the \href{https://www.csie.ntu.edu.tw/~cjlin/libsvmtools/datasets/regression.html}{\texttt{pyrim\_scale}} dataset \cite{LIBSVM}, where each data sample is represented as $(B_i, b_i)_{1\leq i \leq m} \in \R^n \times \R$. The dataset has $m = 74$ samples and $n = 27$ features.

\vspace{0.2cm}
\noindent
\textbf{Computer configurations:} 
All numerical results are obtained on a macOS machine equipped with an Apple M1 Pro processor and 16 GB RAM. The experiments are implemented in Python 3.10.9.
 
\subsection{Logistic regression with squared loss} \label{subsec:logistic_regression}

We consider the following logistic regression problem with a squared loss:
\begin{equation*}
    \min_{ \\\mu \in \R^n} \sum_{i = 1}^{m} \left(\sigma\left(B_i^\top \mu - b_i\right) - 0.5\right)^2
\end{equation*}
where $B_i$ is the $i^{\text{th}}$ row of $B \in \R^{m \times n}, b \in \R^m$, and $$\sigma(u) = \frac{1}{1 + \exp(-u)}$$
To  solve this problem using our algorithms, we first reformulate it as a saddle point problem. We introduce a new variable $\nu \in \R^m$ defined by $\nu_i = B_i^\top \mu - b_i, ~\forall i \in \{1, \dots, m\}$. That is: 
\begin{equation} \label{eqn:C-sigmoid}
    \begin{split}
        \min_{y \in \R^{n + m}} ~ &\sum_{i = 1}^{m} \left(\sigma (\nu_i) - 0.5\right)^2 \\ 
        \text{s.t.} ~&~ Ay = b
    \end{split}
\end{equation}
where 
\begin{align*}
    y = \begin{pmatrix} \mu \\ \nu \end{pmatrix}_{(n+m) \times 1} && \text{and} && A = \begin{bmatrix} B && -\Id_m \end{bmatrix}_{m \times (n + m)}
\end{align*}
Now, we can write the saddle point problem of (\ref{eqn:C-sigmoid}) as follows: 
\begin{align*}
    &\min_{y \in \R^{n + m}} \max_{x \in \R^m}  \sum_{i = 1}^{m} \left(\sigma(\nu_i) - 0.5\right)^2 + \left\langle A y - b, x \right\rangle  \\ 
    \equiv & \min_{y \in \R^{n + m}} \max_{x \in \R^m} - \left(\sum_{i = 1}^{m} b_i x_i + \left\langle - A y, x \right\rangle - \sum_{i = 1}^{m} \left(\sigma(\nu_i) - 0.5\right)^2 \right)\\ 
    \equiv & - \max_{y \in \R^{n + m}} \min_{x \in \R^m} \left(\sum_{i = 1}^{m} b_i x_i + \left\langle -A^\top x, y \right\rangle - \sum_{i = 1}^{m} \left(\sigma(\nu_i) - 0.5\right)^2 \right) \stepcounter{equation}\tag{SPP-sigmoid}\label{eqn:SPP-sigmoid}
\end{align*}
Hence, by comparing \eqref{eqn:SPP-sigmoid} with the saddle point problem \eqref{eqn:SPP_numerical}, we identify, for all $i \in \{1, \dots, m\}$
\begin{align*}
    f_i(x_i) = b_ix_i && g_2(y) = \sum_{i = 1}^m \left(\sigma(\nu_i) - 0.5\right)^2 
\end{align*}

\subsubsection{Problem analysis} \label{subsubsec:analysis_sigmoid}
\begin{enumerate}
    \item The proximal of $f_i, ~\forall i \in \{1, \dots, m\}$, is given by: $\prox_{\gamma_x f_i} (x_i) = x_i - \gamma_x b_i$
\item The gradient of $g_2$ is:
\begin{align*}
    \nabla g_2(y) &= \begin{bmatrix} \nabla_\mu g_2(y) \\ \nabla_\nu g_2(y)\end{bmatrix} = \begin{bmatrix}
        0 \\ \left(\frac{\partial g_2}{\partial \nu_i}(y)\right)_{1 \leq i \leq m} 
    \end{bmatrix} \\ &= \begin{bmatrix}
            0 \\ \left(2\sigma(\nu_i)(\sigma(\nu_i) - 0.5)(1 - \sigma(\nu_i))\right)_{1 \leq i \leq m}
        \end{bmatrix}
\end{align*}
\item The Lipschitz constant of the gradient of $g_2$ is: $$L_{\nabla g_2} = \left\|\nabla^2 g_2(y)\right\|_{op} = \mathrm{spr}\left(\nabla^2 g_2 \right) = \max\{|\lambda_1|, \dots, |\lambda_{m}|\}$$
where $\mathrm{spr(\nabla^2 g_2)}$ denotes the spectral radius of $\nabla^2 g_2$, and for all $i, j \in \{1, \dots, m\}$, we have: 
\begin{align*}
    \nabla^2_\mu g_2(y) = \frac{\partial^2 g_2}{\partial \mu \partial \nu_i}(y) = \frac{\partial^2 g_2}{\partial \nu_i \partial \mu_j} = 0 && \frac{\partial^2 g_2}{\partial \nu_i \partial \nu_j} = 0~~ \forall i \neq j
\end{align*}
and
\begin{align*}
    \frac{\partial^2 g_2}{\partial \nu^2_i} (y) &= \frac{\partial}{\partial \nu_i} \left(2\sigma(\nu_i)(\sigma(\nu_i) - 0.5)(1 - \sigma(\nu_i))\right) \\
    &= \sigma(\nu_i) (1 - \sigma(\nu_i)) \left[-6 \sigma^2(\nu_i) + 6 \sigma(\nu_i) -1\right]
\end{align*}
Hence, $\nabla^2 g_2$ has at most $m$ nonzero entries on its main diagonal, given by $\frac{\partial^2 g_2}{\partial \nu^2_i}, ~\forall i \in \{1, \dots, m\}$. Therefore, the spectral radius $\mathrm{spr}\left(\nabla^2 g_2\right)$ corresponds to the maximum absolute among these diagonal entries. Note that, for each $\nu_i \in \R$ we have: $$\frac{\partial^2 g_2}{\partial \nu_i^2} =  \sigma(\nu_i) (1 - \sigma(\nu_i)) \left[-6 \sigma^2(\nu_i) + 6 \sigma(\nu_i) -1\right]$$ 
which is a one-dimensional expression that can be interpreted as a degree-4 polynomial in $\sigma(\nu_i)$. This function has a local maximum at $\sigma(\nu_i) = 0.5$ and local minima at $\sigma(\nu_i) = \frac{1}{2} \pm \frac{1}{\sqrt{6}}$. Substituting and taking the absolute maximum yields $\mathrm{spr}\left(\nabla^2 g_2\right) = 0.125$.

\item ALM analysis: From \eqref{eqn:C-sigmoid}, we define the following augmented Lagrangian 
\begin{equation*}
    \Lambda_\mu (y, x) = \sum_{i = 1}^m (\sigma(\nu_i) - 0.5)^2 + \langle Ay - b, x \rangle + \frac{\mu}{2} \|Ay - b\|^2
\end{equation*}
Then, we employ the Gradient Descent (GD) algorithm to solve the inner optimization problem, $\min_{y} \Lambda_\mu (y, x_k)$. The step size is chosen as $\gamma_{\alm} = \frac{1}{L_{\alm}}$ where $L_{\alm}$ denotes the Lipschitz constant of the gradient of $\Lambda_\mu (y, x_k)$. That is, for a given $x_k$ and for any $y, y' \in \R^{m+n}$, we have: 
\begin{align*}
    \left\|\nabla \Lambda_\mu (y, x_k) - \nabla \Lambda_{\mu}(y', x_k) \right\|
    &= \left\| \nabla g_2(y) - \nabla g_2(y') + \mu A^\top A (y - y') \right\| \\
    &\leq \left\|\nabla g_2(y) - \nabla g_2(y') \right\| + \mu \left\| A^\top A (y - y')\right\| \\ 
    &\leq L_{\nabla g_2} \|y - y'\| + \mu  \left\|A^\top A \right\|_{op} \|y - y'\|
\end{align*}
Thus, $L_{\alm} = L_{\nabla g_2} + \mu \left\|A^\top A \right\|_{op}$.


\item KKT error: considering \eqref{eqn:SPP-sigmoid}, the KKT error is defined as: 
\begin{equation*}
    \K(x, y) = \left\| -B^\top x - \nabla g(y)\right\|^2 + \left\|By - b\right\|^2
\end{equation*}
\end{enumerate}

\subsection{Perceptron regression with ReLU} \label{subsec:perceptron}
 
We are interested in the following perceptron regression problem with a ReLU activation function: 
\begin{equation*}
    \min_{w \in \R^n} \left\|b - \hat b(w)\right\|^2
\end{equation*}
where for each $i \in \{1, \dots, m\}$,  
$$\hat b_i (w) = r \left(\langle w, B_i \rangle \right) := \max(0, \langle w, B_i, \rangle )$$ Equivalently, we would like to solve the following saddle point problem:
\begin{align*}
&\min_{l, w} \|l - b\|^2 \hspace{0.35cm} \text{s.t.} \hspace{0.35cm} l_i = r \left(\langle w, B_i \rangle\right), ~ \forall i \in \{1, \dots, m\} \\
\equiv &\min_{w, l, u} \|l - b \|^2 \hspace{0.25cm} \text{s.t.} \hspace{0.25cm} l_i = r(u_i)~~ \text{and}~~ u_i = w^\top B_i, ~ \forall i  \\
\equiv &\min_{w, l, u } \|l -  b \|^2 + \sum_{i=1}^m \imath_{\gp(r)}(u_i, l_i) \hspace{0.25cm} \text{s.t.} \hspace{0.25cm}  u  = B w
\end{align*}
where $\gp{r} = \{(u,l) \;:\; l = r(u)\}$ is the graph of $r$ and $\imath_{\gp(r)}$ is the indicator of this set.
We are kind of done up to here, however, to obtain a more tractable proximal of the objective function, we further introduce an auxiliary variable $\lambda$ such that $\lambda = l$. That is:
\begin{align*}
&\min_{w, l, u, \lambda } \|\lambda -  b \|^2 + \sum_{i=1}^m \imath_{\gp(r)}(u_i, l_i) \hspace{0.25cm} \text{s.t.} \hspace{0.25cm}  u  = B w ~~ \text{and} ~~ \lambda = l \\
\equiv &\min_{w, l, u, \lambda} \max_{\mu, \nu}\|\lambda - b\|^2 + \sum_{i=1}^m \imath_{\gp(r)}(u_i, l_i) + \langle Bw - u, \mu\rangle + \langle l - \lambda, \nu \rangle \\
\equiv &\min_{w, l, u, \lambda} \max_{\mu, \nu}\|\lambda - b\|^2 + \sum_{i=1}^m \imath_{\gp(r)}(u_i, l_i) + \left\langle \underbrace{\begin{bmatrix} B\;, & -\mathbf{I}_{m \times m} \end{bmatrix}}_{A_1} \begin{pmatrix} w \\ u \end{pmatrix}, \mu \right\rangle \\
& \qquad \hspace{6cm} + \left\langle \underbrace{\begin{bmatrix}\mathbf{I}_{m\times m}\;, & -\mathbf{I}_{m \times m}\end{bmatrix}}_{A_2} \begin{pmatrix} l \\ \lambda \end{pmatrix}, \nu \right\rangle \\
\equiv &\min_{x} \max_{y} \|\lambda - b\|^2 + \sum_{i=1}^m \imath_{\gp(r)}(u_i, l_i) + \langle A x, y \rangle \stepcounter{equation}\tag{SPP-perceptron}\label{eqn:SPP-perceptron}
\end{align*}
where $w \in \R^n$ and $u, \lambda, l, \mu, \nu \in \R^m$
\begin{align*}
    x = \begin{pmatrix} w \\ u \\ l \\ \lambda \end{pmatrix}_{(n + 3m) \times 1} && A = \begin{bmatrix} A_1 & \mathbf{0}_{m \times 2m} \\ \mathbf{0}_{m\times (m+n)} & A_2\end{bmatrix}_{2m \times (n + 3m)} && v = \begin{pmatrix} \mu \\ \nu \end{pmatrix}_{2m \times 1}
\end{align*}
Hence, by comparing \eqref{eqn:SPP-perceptron} with the saddle point problem \eqref{eqn:SPP_numerical}, we identify $g_2 = 0$, and $f(x) = \sum_{i = 1}^m h(\lambda_i) + k(u_i, l_i)$ where for any $i \in \{1, \dots, m\}$:
\begin{align*}
h(\lambda_i) := (\lambda_i - y_i)^2 && \text{and} &&
    k(u_i, l_i) := \imath_{\gp(r)}(u_i, l_i)
\end{align*}

\subsubsection{Problem analysis} \label{subsubsec:analysis_perceptron}
\begin{enumerate}
    \item Proximal of $f$: Thanks to the separability of $f$,  its proximal operator can be computed component-wise as follows: 
\begin{equation*}
\prox_{\gamma_xf}(x) = \left(\left(\prox_{0}(w_i)\right)_{1 \leq i \leq n}, \left(\prox_{\gamma_x k}(u_j, l_j)\right)_{1 \leq j \leq m}, \left(\prox_{\gamma_x h}(\lambda_\ell)\right)_{1 \leq \ell \leq m}\right)
\end{equation*}
where for all $i \in \{1, \dots, n\}$ and for all $j, \ell \in \{1, \dots, m\}$
\begin{align*}
    \prox_0(w_i) &= w_i \\ 
    \prox_{\gamma_x k}(u_j, l_j) &= \prox_{\gamma_x \imath_{\gp(r)}}(u_j, l_j) = \proj_{\gp(r)}(u_j, l_j) \\
    \prox_{\gamma_x h}(\lambda_\ell) &= \prox_{\gamma_x (. - y_\ell)^2}(\lambda_\ell) = \frac{\gamma_x}{2\gamma_x + 1} \left(2y_\ell + \frac{1}{\gamma_x}\lambda_\ell \right) 
\end{align*}
The projection onto the graph of the ReLU function is a non-trivial exercise that needs a fair amount of calculations. Therefore, we present its formula here  along with a geometric illustration in Fig. \ref{fig:relu_proj}, and defer the detailed derivation to Appendix \ref{apdx:proj}. For any $j \in \{1, \dots, m\}$, we denote $(\tilde u_j, \tilde l_j) = \proj_{\gp(r)}(u_j, l_j)$ such that $\tilde l_j = r(\tilde u_j)$ and 
\begin{equation} \label{eqn:relu_proj}
\tilde u_j = \begin{cases}
        0, & u_j \geq 0, u_j + l_j \leq 0 \\
        \frac{u_j + l_j}{2}, & u_j \geq 0, u_j + l_j > 0 \\
        \frac{u_j + l_j}{2}, & u_j < 0, \left(1 + \sqrt{2}\right)u_j + l_j  > 0 \\
        u_j, & u_j < 0, \left(1 + \sqrt{2}\right)u_j + l_j \leq 0 
    \end{cases}
\end{equation}
\begin{figure}[H]
    \centering
\begin{tikzpicture}
    \begin{axis}[
        axis lines=middle,
        xlabel={$u$},
        ylabel={$l$},
        samples=100,
        domain=-2.2:2.2,
        ymin=-2.2, ymax=2.2,
        xmin=-2.2, xmax=2.2,
        clip=false,
        enlargelimits=true,
        legend pos=south east
    ]
    \addplot[name path=A1, domain=-2:0] {-x};
    \addplot[name path=B1, domain=-2:0] {-2};  
    \addplot[red!30, opacity=0.5] fill between[of=A1 and B1];

    \addplot[name path=A11, domain=-2:(-2/(1+sqrt(2)))] {2};
    \addplot[name path=B11, domain=-2:0] {-x};  
    \addplot[red!30, opacity=0.5] fill between[of=A11 and B11];
    
    \addplot[name path=A2, domain=0:2] {-x};
    \addplot[name path=B2, domain=0:2] {-2};  
    \addplot[green!30, opacity=0.5] fill between[of=A2 and B2];
    \addplot[name path=A3, domain=0:2] {max(0, x)};
    \addplot[name path=B3, domain=0:2] {-x};  
    \addplot[blue!30, opacity=0.5] fill between[of=A3 and B3];
    
    \addplot[name path=A4, domain=0:2] {max(0, x)};
    \addplot[name path=B4, domain=0:2] {2};
    \addplot[blue!30, opacity=0.5] fill between[of=A4 and B4];

    \addplot[name path=A5, domain=(-2/(1+sqrt(2)):0]{-(1+sqrt(2))*x};
    \addplot[name path=B5, domain=(-2/(1+sqrt(2)):0] {2};
    \addplot[blue!30, opacity=0.5] fill between[of=A5 and B5];

    \addplot[thick,blue] {max(0,x)} node[right] {ReLU$(u)$};
    \addplot[thick,red] {-x} node[right] {$u + l = 0$};
    \addplot[thick, brown, domain=-1:0] {-(1 + sqrt(2)) * x} node[above] at (-1, 2.3) {\scriptsize $\left(1+\sqrt{2}\right)u + l = 0$};
    
    \draw[red, thick, ->] (-1.5,-1.5) -- (-1.5,0);
    \draw[green, thick, ->] (0.5,-1.5) -- (0,0);
    \draw[blue, thick, ->] (1,1.5) -- (1.25,1.25);
    \draw[blue, thick, ->] (-0.1,1.5) -- (0.7,0.7);
    \draw[red, thick, ->] (-0.5,1) -- (-0.5,0);
    \draw[blue, thick, ->] (1.5, -0.5) -- (0.5,0.5);

    \fill[black] (-1.5,-1.5) circle (2pt);
    \fill[black] (0.5,-1.5) circle (2pt);
    \fill[black] (1,1.5) circle (2pt);
    \fill[black] (-0.1,1.5) circle (2pt);
    \fill[black] (-0.5,1) circle (2pt);
    \fill[black] (1.5, -0.5) circle (2pt);
    \end{axis}
\end{tikzpicture}
    \caption{Projection onto the graph of ReLU: illustrating the different cases summarized in formula \eqref{eqn:relu_proj}}
    \label{fig:relu_proj}
\end{figure}
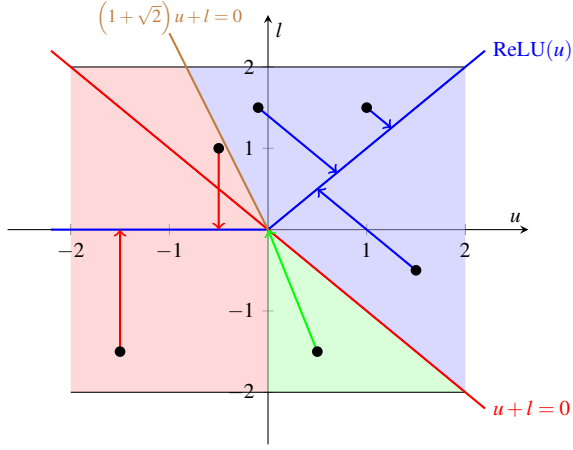

\item ALM analysis: from \eqref{eqn:SPP-perceptron}, we define the following augmented Lagrangian 
\begin{equation*}
    \Lambda_\mu (x, y) = \|\lambda - b\|^2 + \imath_{\gp(r)}(u, l) + \langle Ax, y \rangle + \frac{\mu}{2} \|Ax\|^2
\end{equation*}
Then, we employ the Proximal Gradient Descent (PGD) algorithm to solve the inner optimization problem: $\min_{x} \Lambda_\mu (x, y_k)$. We choose $\gamma_{\alm} = \frac{1}{L_{\alm}}$ where $L_{\alm}$ denotes the Lipschitz constant of the gradient of $\psi(x, y_k) := \langle Ax, y_k \rangle + \frac{\mu}{2} \|Ax\|^2$. That is, for a given $y_k$ and for any $x \in \R^{n + 3m}$, we have: 
\begin{align*}
    L_{\alm} = \left\|\nabla^2 \psi(x, y_k)\right\| = \mu \left\| A^\top A\right\|
\end{align*}
\item KKT error: considering \eqref{eqn:SPP-perceptron}, the Lagrangian is defined as: 
\begin{equation*}
    \Lag (x, y) = \|\lambda - b \|^2 + \imath_{\mathrm{gp}(r)} (u, l) + \langle Ax, y \rangle 
\end{equation*}
Hence, the KKT error is defined as:
\begin{align*}
    \K(x, y) &= \left\| \frac{\partial L}{\partial w} (x, y) \right\|^2 +  \left\| \frac{\partial L}{\partial (u, l)} (x, y) \right\|^2_0 +   \left\| \frac{\partial L}{\partial \lambda} (x, y) \right\|^2  + \left\| \frac{\partial L}{\partial y} (x, y) \right\|^2 
    \\
    &= \left\| A^\top_w y\right\|^2 + \left\| \partial_C \imath_{\mathrm{gp}(r)} (u, l) + \begin{bmatrix} A^\top_u y & A^\top_l y\end{bmatrix}_{n \times 2} \right\|^2_0 + \left\| 2(\lambda - b) + A^\top_\lambda y\right\|^2 + \left\| Ax\right\|^2 \\ 
    &= \left\| A^\top_w y\right\|^2 + \left\|\nor_{\mathrm{gp}(r)} (u, l) + \begin{bmatrix} A^\top_u y & A^\top_l y\end{bmatrix}_{n \times 2} \right\|^2_0 + \left\| 2(\lambda - b) + A^\top_\lambda y\right\|^2 + \left\| Ax\right\|^2
\end{align*}
where $A^\top_w, A^\top_u, A^\top_l$ and $A^\top_\lambda$ represent the sub-matrix formed from the corresponding rows in $A^\top$ to the vectors $w, u, l$ and $\lambda$, respectively. 

Using the definition of the Clarke normal cone \eqref{eqn:normal_cone}, we obtain, for each $j \in \{1, \dots, n\}$: 
\begin{equation*}
    \nor_{\mathrm{gp}(r)} (u_j, l_j) = \begin{cases}
        c_1 (0, u_j), &  u_j < 0 \\ 
        \R^2, & u_j = 0 \\ 
        c_2 (u_j, -u_j), & u_j > 0
    \end{cases}
\end{equation*}
Then, we substitute this formula in the infimal norm, $\left\| \nor_{\mathrm{gp}(r)} (u, l) + \begin{bmatrix}A^\top_u y & A^\top_l y\end{bmatrix}\right\|_0^2$, and find the element with the minimal norm. For any $j \in \{1, \dots, n\}$, we have three simple cases: When $u_j = 0$, it's a trivial projection on $\R^2$. When $u_j \neq 0$ we project on two different lines, which implies 
\begin{align*} c_1 = \frac{-\left\langle A^\top_{l_j}, y \right\rangle}{u_j} && \text{and} && c_2 = \frac{\left\langle A^\top_{l_j} - A^\top_{u_j}, y \right\rangle}{2u_j} \end{align*}
Hence, 
\begin{equation*}
    \left\| \nor_{\mathrm{gp}(r)} (u, l) + \begin{bmatrix}A^\top_u y & A^\top_l y\end{bmatrix}\right\|_0^2 = \sum_{j = 1}^n \begin{cases}
        \langle A^\top_{u_j} , y \rangle^2, & u_j < 0 \\
        0, & u_j = 0 \\ 
        \frac{1}{2} \left\langle A^\top_{u_j} + A^\top_{l_j}, y \right\rangle^2, & u_j > 0
    \end{cases}
\end{equation*}
\end{enumerate}

\subsection{Figures} \label{subsec:figs}
 
In this subsection, we present two main figures about our both experiments: logistic regression with squared loss, and perceptron regression with ReLU activation. Estimating $\rho = -2 \times 10^{-3}$, we outline the parameters choice in Table \ref{tab:params} for both experiments, based on the analysis done in sub-sub-sections \ref{subsubsec:analysis_sigmoid} and \ref{subsubsec:analysis_perceptron}.
\begin{table}[H]
    \centering
    \begin{tabular}{||c|c|c|c||}
    \hline
    \textbf{Experiment} & \textbf{Algorithm} & \textbf{Step sizes} & \textbf{Extrapolation parameters} \\
    \hline
    \hline
    \multirow{4}{*}{\shortstack{\textbf{Logistic} \\ \textbf{regression}}}
        & NC-PDHG & $\begin{aligned}c = 0.4 && \gamma_x = 0.024 && \gamma_y = 0.015\end{aligned}$ & $\alpha = 0.73$\\
        & NC-SPDHG & $\begin{aligned}c = 0.1 && \gamma_x = 0.054 && \gamma_y = 0.007\end{aligned}$ & $\begin{aligned} \theta = m = 74 && \alpha = 0.38 \end{aligned}$\\
        & CEG+ & $\gamma_{\ceg} = 0.02$ & $\begin{aligned} \delta_{\ceg} = -0.004 && \alpha_{\ceg} = 0.79 \end{aligned}$\\
        & ALM & $\gamma_{\alm} = 0.0022$ & $\mu = 0.5$ \\
    \hline
    \multirow{4}{*}{\shortstack{\textbf{Perceptron} \\ \textbf{regression}}}
        & NC-PDHG & $\begin{aligned}c = 0.55 && \gamma_x = 0.018 && \gamma_y = 0.018\end{aligned}$ & $\alpha = 0.78$\\
        & NC-SPDHG & $\begin{aligned}c = 0.14 && \gamma_x = 0.043 && \gamma_y = 0.0075\end{aligned}$ & $\begin{aligned} \theta = 2m + n = 175 && \alpha = 0.46 \end{aligned}$\\
        & CEG+ & $\gamma_{\ceg} = 0.018$ & $\begin{aligned} \delta_{\ceg} = -0.002 && \alpha_{\ceg} = 0.78 \end{aligned}$\\
        & ALM & $\gamma_{\alm} = 0.0022$ & $\mu = 0.5$ \\
    \hline
    \end{tabular}
    \caption{Chosen parameters for the logistic regression with squared loss and perceptron regression with ReLU activation experiments}
    \label{tab:params}
\end{table}

The first figure, Fig. \ref{fig:linear_convergence}, illustrates the convergence behavior of our algorithms, NC-PDHG and NC-SPDHG, across both experiments. In each case, the sub-figures display linear convergence, a property that could be further explored in future work. The second figure, Fig. \ref{fig:proximal}, compares our algorithms (NC-PDHG and NC-SPDHG) with the state-of-the-art ones (CEG+ and ALM). It shows the number of proximal evaluations required by each algorithm to achieve a specified tolerance in both experiments. Since NC-SPDHG is a randomized algorithm, we include multiple runs in light gray in Fig. \ref{fig:proximal} to highlight the consistency of its performance despite inherent randomness. 
\begin{figure}
    \centering
    \includegraphics[width = \linewidth]{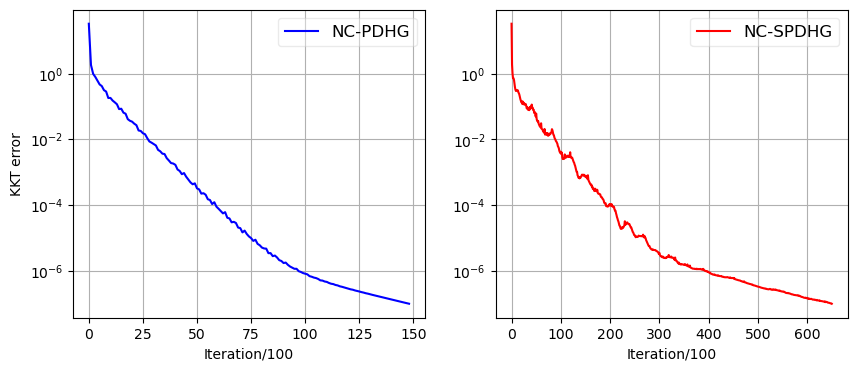} 
    \includegraphics[width=\linewidth]{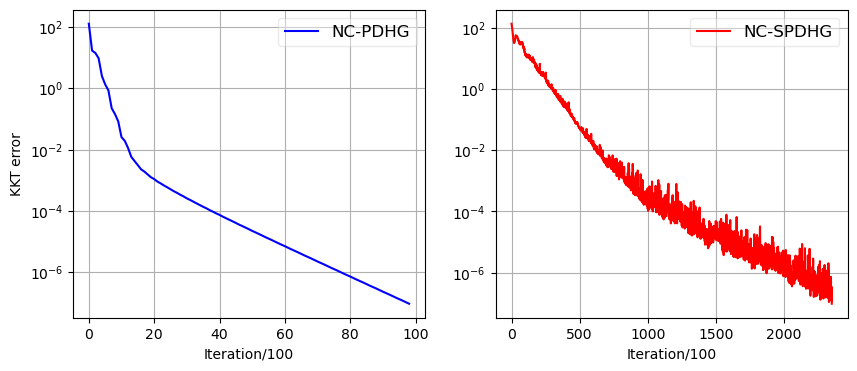}
    \caption{Convergence of NC-PDHG and NC-SPDHG on the logistic regression with squared loss (top) and perceptron regression (bottom) experiments.}
    \label{fig:linear_convergence}
\end{figure}
\begin{figure}
\subfloat[Logistic regression with squared loss]{\includegraphics[width=0.5\linewidth]{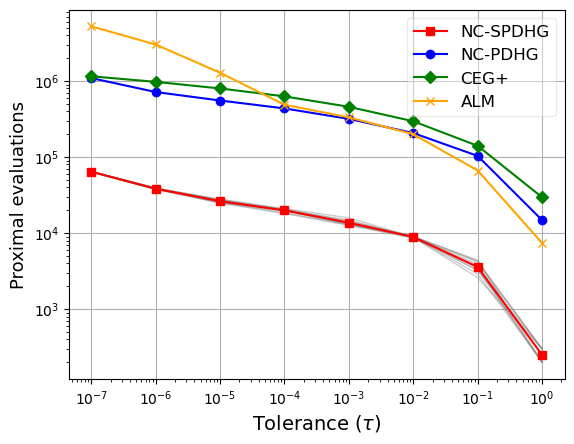}\label{subfig:sigmoid_prox}}
\subfloat[Perceptron regression with ReLU activation]{\includegraphics[width=0.5\linewidth]{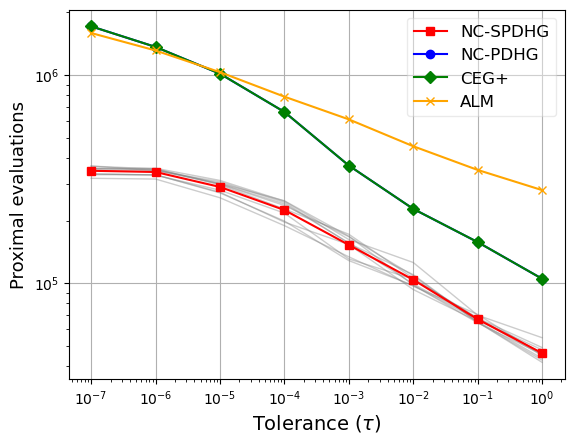}\label{subfig:perceptron_prox}}
\caption{Number of proximal evaluations required to reach different tolerances for the NC-PDHG, NC-SPDHG, CEG+, and ALM algorithms across both experiments. The light gray curves represent multiple runs of the randomized NC-SPDHG algorithm. In Fig. 3b, the curves for NC-PDHG and CEG+ are overlapping}
\label{fig:proximal}
\end{figure}
\subsection{Least-squares} \label{subsec:saga}
The objective of this experiment is to demonstrate that our NC-SPDHG algorithm performs effectively in the convex-concave setting, comparing to leading convex state-of-the-art methods such as SAGA \cite{saga}.

We consider the following \textit{convex} optimization problem:

\begin{equation} \label{eqn:saga_prob}
    \min_{w \in \R^n} \sum_{i  = 1}^m f_i(w) := \sum_{i = 1}^m \frac{1}{2} \left(B_i^\top w - b_i\right)^2 
\end{equation}
where $B_i$ is the $i^{\text{th}}$ row of $B \in \R^{m \times n}$ and $b \in \R^m$ are given. While SAGA addresses problems of the form \eqref{eqn:saga_prob}, NC-SPDHG is designed to solve saddle point problems. Therefore, we reformulate \eqref{eqn:saga_prob} as a \textit{convex-concave} saddle point problem by introducing a new variable $u \in \R^m$ such that $u_i = B_i^\top w, ~\forall i \in \{1, \dots, m\}$. Specifically, we write:
\begin{equation*}
    \begin{split}
        &\min_{w, u} ~ \frac{1}{2}\sum_{i = 1}^m \left(u_i - b_i\right)^2 \\
        &~~\text{s.t.} ~~ u_i = B_i^\top w, ~~ \forall i \in \{1, \dots, m\}
    \end{split}
\end{equation*}
Thus, the associated \textit{convex-concave} saddle point problem writes: 
\begin{equation} \label{eqn:SPP_saga}
    \min_{x \in \R^{n + m}} \max_{y \in \R^m} \frac{1}{2} \sum_{i = 1}^m \left(u_i - b_i \right)^2 + \left\langle Ax, y \right\rangle
\end{equation}
where
\begin{align*}
    x = \begin{pmatrix} w \\ u \end{pmatrix}_{(n+m) \times 1} && A = \begin{bmatrix} B && -\Id_m \end{bmatrix}_{m \times (n+m)}
\end{align*}
By comparing \eqref{eqn:SPP_saga} with the saddle point problem \eqref{eqn:SPP_numerical}, we identify:
\begin{align*}
  g_2 = 0  && f_i(x_i) = \frac{1}{2} (u_i - b_i)^2, ~~\forall i \in \{1, \dots, m\}
\end{align*}

\subsubsection{Problem analysis}
\begin{enumerate}
    \item The SAGA algorithm solves \eqref{eqn:saga_prob} and requires the formula of the gradient of each function $f_i(w) = \frac{1}{2} \left(B_i^\top w - b_i\right)^2$, for all $i \in \{1, \dots, m\}$, which is given by:
\begin{equation*}
    \nabla f_i(w) = B_i \left(B_i^\top w - b_i\right)
\end{equation*}

\item The NC-SPDHG algorithm solves \eqref{eqn:SPP_saga} and requires the proximal operator of each $f_i(x_i) = \frac{1}{2} (u_i - b_i)^2$, for any $i \in \{1, \dots, m\}$, which is given by: 
        \begin{equation*}
            \prox_{\gamma_x f_i}(x_i) = \begin{cases}
                x_i & \text{if} ~i \leq n \\ 
                \frac{x_i + \gamma_x b_i}{1 + \gamma_x}& \text{Otherwise}
            \end{cases}
        \end{equation*}

\item Parameters selection: Since the problem is convex, we guarantee that $\rho \geq 0$, so we choose $\rho = 0$. Then, we choose the NC-SPDHG parameters according to subsection \ref{subsec:parameters_NC_SPDHG} with $c = 0.05$. For SAGA, we choose the step size as $\gamma_{\saga} = \frac{1}{L_{\saga}}$ following \cite[Section 2]{saga}, where $L_{\saga}$ is the largest Lipschitz constant of the gradients of $f_i(w) = \frac{1}{2} \left(B_i^\top w - b_i\right)^2$ among all $i$. That is: $L_{\saga} = \max\left\{\left(\left\|B_i\right\|^2\right)_{1 \leq i \leq m}\right\}$. Table \ref{tab:saga_param} summarizes the chosen parameters.
\begin{table}[H]
    \centering
    \begin{tabular}{||c|c||}
    \hline
    \textbf{NC-SPDHG} & \textbf{SAGA} \\
    \hline
    \hline
        $\begin{aligned}\rho = 0 && \gamma_x = 0.27&& \gamma_y = 0.001 && \alpha = 0.69 \end{aligned}$ & $\gamma_{\saga} = 0.013$\\
        \hline
    \end{tabular}
    \caption{Chosen parameters for the least-squares problem}
    \label{tab:saga_param}
\end{table}
\item KKT error: we consider the KKT error of the unconstrained problem \eqref{eqn:saga_prob}. That is: 
\begin{equation*}
    \K(w) = \left\| \nabla \left(\frac{1}{2} \sum_{i = 1}^m \left(B_i^\top w - b_i\right)^2\right) \right\|^2 = \left\| \nabla \left( \frac{1}{2} \left\| Bw - b \right\|^2\right) \right\|^2 = \left\| B^\top (Bw - b) \right\|^2
\end{equation*}
\end{enumerate}
Fig. \ref{fig:saga} shows that NC-SPDHG performs well and can be a competitive candidate even in the convex-convex settings. 
\begin{figure}[H]
    \centering
    \includegraphics[width=0.7\linewidth]{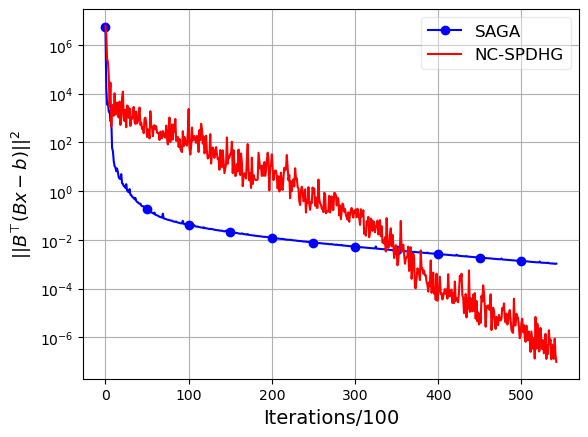}
    \caption{Convergence of SAGA and NC-SPDHG to achieve $\tau = 10^{-7}$-stationary point}
    \label{fig:saga}
\end{figure}
\section{Conclusion and Perspectives} \label{sec:conclusion}

In this paper, we proposed two novel algorithms for solving nonconvex-nonconcave and nonsmooth saddle point problems under the weak MVI assumption. Our algorithms — Algorithm~\ref{alg:NC-PDHG} and Algorithm~\ref{alg:NC-SPDHG}, referred to as NC-PDHG and NC-SPDHG, respectively — extend the classical Primal-Dual Hybrid Gradient (PDHG) and Stochastic Primal-Dual Hybrid Gradient (SPDHG) methods to the nonconvex-nonconcave setting.

Leveraging the notion of the Clarke sub-differential, we established convergence guarantees for both algorithms without requiring any convexity assumptions. In particular, our analysis allows for nonconvex proximals and gradients, assuming only a mild condition on the weak MVI parameter. 

To prove the convergence of NC-SPDHG, we relied on the \texttt{PEPit} performance estimation toolbox and auto-Lyapunov techniques, which were essential in enabling the analysis. For both algorithms, we discussed parameter selection and validated our findings through a range of numerical experiments that illustrate linear convergence of both algorithms. 

\vspace{0.3cm}
\noindent
This work opens several promising directions for future research:
\begin{itemize}
\item \textbf{Linear convergence.} Proving under which conditions the numerically observed linear convergence of both algorithms holds.

\item \textbf{Generalization to nonzero $f_2$.} Extending NC-SPDHG to handle the general saddle point problem \eqref{eqn:SPP} with $f_2 \neq 0$. 

\item \textbf{Beyond separability.} Like many coordinate-based algorithms, our methods assume separability of the function where coordinate updates are applied. It would be interesting to investigate whether this assumption can be relaxed.

\item \textbf{Non-bilinear couplings.} Our analysis assumes bilinear coupling in the smooth term $ \varphi(x, y) = f_2(x) + \langle Ax, y \rangle - g_2(y) $. A natural extension would be to study more general forms of $ \varphi(x, y) $ that include nonlinear interactions between $x$ and $y$.

\item \textbf{PURE-CD extension.} In (NC)-SPDHG, all the coordinates of the dual variable $y$ are updated at each iteration. It would be interesting to update only some coordinates of $y$ following the sparsity pattern of $A$, as is done in PURE-CD~\cite{PURE_CD} for convex-concave saddle point problems. 

\end{itemize}
\appendix
\section{Projection onto the graph of ReLU} \label{apdx:proj}
In this appendix, we detail the computation of the projection onto the graph of the ReLU function, as used in the perceptron regression experiment. Specifically, we wish to compute:
$$
(\tilde u, \tilde l) \in \proj_{\gp(r)}(u, l), \quad \text{where} \quad r(u) := \max(0, u).
$$
Let's analyze the optimality conditions  based on the sub-differential:

\begin{align*}
    (\tilde u,& \;r(\tilde u)) \in \proj_{\gp(r)}(u, l)
    = \arg\min_{(u' l') \in \gp(r)} \|(u', l') - (u, l)\|^2 \\ 
&\Longrightarrow   \tilde u \in \arg\min_{u' \in \R} \|(u' - u, \max(0, u') - l)\|^2 
    = \arg\min_{u' \in \R} (u' - u)^2 + (\max(0, u') - l)^2 \\
  &  \Longrightarrow ~  0 \in \partial \left((. - u)^2 + (\max(0, .) - l)^2 \right)(\tilde u) \\ 
   & \Longrightarrow ~  0 \in 2(\tilde u - u) + 2(\max(0, \tilde u) - l) \times \begin{cases}
        0, & \tilde u < 0 \\
        1, & \tilde u > 0 \\
        [0, 1], & \tilde u = 0
    \end{cases}
\end{align*}

So, we analyze three main cases:
\begin{enumerate}
    \item $\tilde{u} < 0 \Longrightarrow \max(0, \tilde{u}) = 0 \Longrightarrow \tilde{u} = u$
    \item $\tilde{u} > 0 \Longrightarrow \max(0, \tilde{u}) = \tilde u \Longrightarrow \tilde{u} = \frac{u + l}{2} $
    \item $ \tilde{u} = 0 \Longrightarrow \max(0, \tilde u) \in [0, 1] \Longrightarrow u \in -l [0, 1]$
\end{enumerate}
Now, as the solution is not unique, we are going to do case by case to find the correct stationarity point. 
\subsection*{Case analysis in short}
\renewcommand{\arraystretch}{1.4}
\begin{tabular}{|c|c|c|c|}
\hline
Case & Condition on \( u, l \) & Stationary \( \tilde{u} \) & Result \\
\hline
\hline
1 & \( u \geq 0, u + l \leq 0 \) & \( \tilde{u} = 0 \) & \( \tilde{u} = 0 \) \\
2 & \( u < 0, u + l < 0 \) & \( \tilde{u} = u \) & \( \tilde{u} = u \) \\
3 & \( u < 0, u + l = 0 \) & Compare \( u \) vs \( 0 \) & \( \tilde{u} = u \) \\
4 & \( u > 0, u + l > 0 \) & \( \tilde{u} = \frac{u + l}{2} \) & \( \tilde{u} = \frac{u + l}{2} \) \\
5 & \( u = 0, u + l > 0 \) & Compare \( 0 \) vs \( \frac{l}{2} \) & \( \tilde{u} = \frac{l}{2} \) \\
6 & \( u < 0, u + l > 0 \) & Compare all 3 candidates & Decision based on \( (1 + \sqrt{2})u + l \) \\
\hline
\end{tabular}

\medskip

\noindent In particular, Case 6 requires comparing distances. After analysis, the selection criterion is:
$$
\tilde{u} =
\begin{cases}
u, & \text{if } (1 + \sqrt{2}) u + l \leq 0 \\
\frac{u + l}{2}, & \text{otherwise}
\end{cases}
\quad \text{for } u < 0, u + l > 0.
$$

\subsection*{Case analysis in details}
\begin{enumerate}
    \item[Case 1:] $u \geq 0$ and $u + l \leq 0$
    \begin{itemize}
        \item[Case 1.1:] If $\tilde u < 0$, then $\tilde u  = u \geq 0$. Not possible.
        \item[Case 1.2:] If $\tilde u > 0$, then $\tilde u = \frac{u + l}{2} \leq 0$. Not possible 
        \item[Case 1.3:] If $\tilde u = 0$, then $u \in -l [0, 1]$. Since $u \geq 0$ and $u + l \leq 0$, then $l \leq 0$. So $u \in [0, -l] \iff 0 \leq u \leq -l \iff l \leq u + l \leq 0$. Possible
    \end{itemize}
    Thus, $\tilde u = 0$
    \vspace{0.15cm}
    \item[Case 2:] $u < 0 $ and $u + l < 0$
    \begin{itemize}
        \item[Case 2.1:] If  $\tilde u < 0$, then $\tilde u = u < 0$. Possible. 
        \item[Case 2.2:] If $\tilde u > 0$, then $\tilde u = \frac{u + l}{2} < 0$. Not possible. 
        \item[Case 2.3:] If $\tilde u = 0$, then $u + l \in -l[-1, 0]$. If $l \geq 0$, then $u + l \in [0, l]$ which is not possible, and if $l < 0$, then $u + l \in [l, 0]$ which means that $u + l \geq l$ but $u < 0$, so it's not possible. 
    \end{itemize}
    Thus, $\tilde u  = u$
        \vspace{0.15cm}
    \item[Case 3:] $u < 0$ and $u + l = 0$
    \begin{itemize}
        \item[Case 3.1:] If $\tilde u < 0$, then $\tilde u = u < 0$. Possible
        \item[Case 3.2:] If $\tilde u > 0$, then $\tilde u = \frac{u + l}{2} = 0$. Not possible
        \item[Case 3.3:] If $\tilde u = 0$, then $u \in -l[0, 1]$. Possible
    \end{itemize}
    There are 2 possible candidates, we have to compare between them: 
    \begin{align*}
        \dist \left((u, 0), (u, l)\right)^2 = l^2 && \dist\left((0, 0), (u, l)\right)^2 = u^2 + l^2 > l^2
    \end{align*}
    Thus, $\tilde u = u$
        \vspace{0.15cm}
    \item[Case 4:] $u > 0$ and $ u + l > 0$
    \begin{itemize}
        \item[Case 4.1:] $\tilde u < 0$, then $\tilde u = u > 0$. Not possible 
        \item[Case 4.2:] If $\tilde u > 0$, then $\tilde u = \frac{u + l}{2} > 0$. Possible
        \item[Case 4.3:] If $\tilde u = 0$, then $u \in -l[-1, 0]$. If $l \geq 0$, then $u + l \in [0, l]$ which means $u + l \leq l$ but $u > 0$ so it's not possible. If $l < 0$, then $u + l \in [l, 0]$ so $u + l \leq 0$ which is not possible as well. 
    \end{itemize}
    Thus, $\tilde u = \frac{u + l}{2}$
        \vspace{0.15cm}
    \item[Case 5:] $u  = 0$ and $u + l > 0$
    \begin{itemize}
        \item[Case 5.1:] If $\tilde u < 0$, then $\tilde u = u = 0 $. Not possible
        \item[Case 5.2:] If $\tilde u > 0$, then $ \tilde u = \frac{u + l}{2} > 0$. Possible
        \item[Case 5.3:] If $\tilde u = 0$, then $u \in -l[0, 1]$. If $l \geq 0$, then $u + l \in [0, l]$ which is possible. If $l < 0$, then $u + l \in [l, 0]$ which means that $u + l \leq 0$, so it's not possible.
    \end{itemize}
    There are 2 possible candidates to compare between: 
    \begin{align*}
        \dist\left(\left(\frac{u+l}{2}, \frac{u+l}{2}\right), (u, l)\right)^2 = \frac{l^2}{2} && \dist\left((0, 0), (u, l)\right)^2 = l^2 > \frac{l^2}{2}
    \end{align*}
    Thus, $\tilde u = \frac{u+l}{2}$
        \vspace{0.15cm}
    \item[Case 6:] $u < 0$ and $u + l > 0$
    \begin{itemize}
        \item[Case 6.1:] If $\tilde u < 0$, then $\tilde u = u < 0$. Possible
        \item[Case 6.2:] If $\tilde u > 0$, then $\tilde u = \frac{u + l}{2} > 0$. Possible
        \item[Case 6.3:] If $\tilde u = 0$, then $u \in -l[0, 1]$. If $l > 0$, then $u + l \in [0, l]$ which means $0 \leq u + l \leq l$, possible. If $l \leq 0$, then $ u + l \in [l, 0]$ which is not possible since $u + l > 0$.
    \end{itemize}
    There are 3 possible candidates: $\tilde u \in \{u, \frac{u+l}{2}, 0\}$. So, let's compare the squared distances: 
    \begin{align*}
        \dist\left((u, 0), (u, l)\right)^2 &= l^2 \\
        \dist\left(\left(\frac{u+l}{2}, \frac{u+l}{2}\right), (u, l)\right)^2 &= \frac{(u- l)^2}{2} \\
        \dist\left((0, 0), (u, l)\right)^2 &= u^2 + l^2
    \end{align*}
    Clearly, $\tilde u = 0$ is never a solution. However, to know whether $\tilde u = u$ or $\tilde u = \frac{u+l}{2}$, we have to know when $l^2 \leq \frac{(u - l)^2}{2}$. Note that:
    $$l^2 \leq \frac{(u - l)^2}{2} \iff \frac{1}{2} l^2 + ul - \frac{1}{2}u^2 \leq 0 $$
    Fixing $u$, the discriminant is: $\Delta = 2u^2 > 0$. Thus, 
    $$(\sqrt{2} - 1) u \leq l \leq -(1 + \sqrt{2}) u$$
    The LHS of the inequality is useless since we already assume that $u + l > 0$. The RHS provides us with the projection criterion: If $l \leq -(1 + \sqrt{2}) u$, then $\tilde u = u$, otherwise $\tilde u = \frac{u + l}{2}$.
\end{enumerate}

\subsection*{Final Expression}
Combining all cases, the closed-form projection is:
\[
\tilde{u} = \begin{cases}
0, & u \geq 0, u + l \leq 0 \\
\frac{u + l}{2}, & u \geq 0, u + l > 0 \\
\frac{u + l}{2}, & u < 0, (1 + \sqrt{2}) u + l > 0 \\
u, & u \leq 0, (1 + \sqrt{2}) u + l \leq 0 
\end{cases}
\]

Then, \( \tilde{l} = r(\tilde{u}) = \max(0, \tilde{u}) \).

\begin{acknowledgements}
This work was supported by the Agence National de la Recherche grant ANR-20-CE40-0027, Optimal Primal-Dual Algorithms (APDO), and T\'el\'ecom Paris's research and teaching chair Data Science and Artificial Intelligence for Digitalized Industry and Services DSAIDIS.
\end{acknowledgements}


\paragraph{\emph{{\small\textbf{Conflict of interest}}}}
The authors declare that they have no conflict of interest.

\bibliographystyle{spmpsci}      
\bibliography{ref}   


\end{document}